\documentclass[12pt]{amsart}
\usepackage[margin=1in]{geometry}

\usepackage{url,amsmath,amssymb,mathrsfs}
\usepackage{enumitem}
\usepackage{commath}
\usepackage{physics}
\usepackage{nicefrac}
\usepackage{amsthm}
\usepackage{xfrac}
\usepackage[all]{xy}
\usepackage{graphicx}
\graphicspath{ {./images/} }

\newtheorem{theorem}{Theorem}[section]
\newtheorem{lemma}[theorem]{Lemma}

\newtheorem{corollary}[theorem]{Corollary}

\theoremstyle{definition}

\newtheorem{definition}[theorem]{Definition}
\newtheorem{example}[theorem]{Example}

\newcommand{\bZ}{\mathbb Z}
\newcommand{\bQ}{\mathbb Q}
\newcommand{\bN}{\mathbb N}

\newcommand{\Rbar}{\overline{R}}

\newcommand{\Inv}{\textup{Inv}}

\newcommand{\Cl}{\textup{Cl}}

\newcommand{\modulo}[1]{\textup{ (mod }#1)}

\newcommand{\cO}{\mathcal{O}}
\newcommand{\Irr}{\textup{Irr}}
\newcommand{\term}[1]{\textbf{\textup{#1}}}

\newcommand{\quot}[2]{\large\sfrac{#1}{#2}}
\newcommand{\DivCl}{\textup{DivCl}}

\title[Elasticity of Orders and Power Series]{Elasticity in Orders of an Algebraic Number Field with Radical Conductor Ideal and their Rings of Formal Power Series}
\author{Jim Coykendall
\and
Grant Moles}
\date{May 2024}

\begin{document}

\begin{abstract}
    Orders in an algebraic number field form a class of rings which are of special historical interest to the field of factorization theory. One of the primary tools used to study factorization is elasticity - a measure of how badly unique factorization fails in a domain. This paper explores properties of orders in a number field and how they can be used to study elasticity in not only the orders themselves, but also in rings of formal power series over the orders. Of particular interest is the fact, proven here, that power series extensions in finitely many variables over half-factorial rings of algebraic integers must themselves be half-factorial. It is also shown that the HFD property is not preserved in general for power series rings over non-integrally closed orders in a number field.
\end{abstract}

\maketitle

\section{Introduction}

One of the most fundamental theaters for the study of factorization is the class of orders in a ring of algebraic integers. The paper \cite{carlitz} is an early striking example of how rings of integers form a class of domains amenable to much stronger results than can be found in more general classes of domains. The fact that half-factorial domains (HFDs) have an ideal-theoretic characterization for rings of algebraic integers is one reason why results for HFDs are more robust in this arena. For example, for a ring of algebraic integers that is an HFD, it is known that the HFD property is preserved in localizations and polynomial extensions, neither of which is true in more general settings. But even the much more restrictive class of unique factorization domains (UFDs) are famously not preserved in power series extensions (this was first shown by P. Samuel in \cite{s1961}).

Unlike unique factorization domains, half-factorial domains do not have to be integrally closed, and it is shown in \cite{Co1} that if $R$ is an HFD, then ``integrally closed" is a necessary condition for the polynomial extension $R[x]$ to have the half-factorial property (but not sufficient as can be obtained from the results in, for example, \cite{zaks2} and \cite{GHS1996} in tandem). For Noetherian domains, a complete characterization of polynomial HFDs can be found in \cite{Co1}.

Given the above remarks and the general misbehavior of power series extensions, it is perhaps surprising that ``integrally closed" is not a necessary condition for the domain $R[[x]]$ to be an HFD. It was first shown in the primarily expository paper \cite{Co2005} (Theorem 6.2 and Corollary 6.3) that there are HFDs, $R$, that are not integrally closed despite the fact that $R[[x]]$ is an HFD. A concrete example of this is the order $R:=\mathbb{Z}[\sqrt{-3}]$; for this domain, we have that $R$ and $R[[x]]$ are HFDs, but as $R$ is not integrally closed, $R[x]$ is not an HFD.

Additionally, in the paper \cite{MO1}, it was shown that if $R[[x]]$ is an HFD, it is not true in general that $R[[x,y]]$ is an HFD. This may be considered a companion to the well-known question as to the preservation of the UFD property in power series extensions. As was noted earlier, it is possible for the power series ring $R[[x]]$ to fail to have unique factorization even if $R$ is a UFD, and it is natural to ask if $R[[x]]$ being a UFD implies that $R[[x,y]]$ is a UFD. This question has been open for quite some time, and this makes the results of \cite{MO1} intriguing.

In the papers \cite{halter-koch}, \cite{coykendallhfd}, \cite{Co2}, and more recently \cite{bcmm2025} and \cite{rago}, factorization behavior (especially the half-factorial property) have been investigated from a number of points of view, and this work can be considered a marriage of studying the orders in rings of integers with special attention paid to ring extensions (in particular power series extensions) of these orders. We give a brief overview of the structure of this paper.

In the second section, we develop some closely related but distinct properties (associated, ideal-preserving, and locally associated) that subrings of a commutative ring might possess that are germane to our study of preservation of factorization in these underrings. We study the interplay of these properties and their relation to the unit group and class group of the domains with an eye toward factorization. 

In the third section, we use the tools developed in the second section and apply these concepts to glean insights to factorization properties and elasticity of an order $R$ and its integral closure (full ring of integers) $\overline{R}$.

In sections four and five, we leverage the previous results to the power series ring $R[[x]]$, where $R$ is an order in the ring of integers $\overline{R}$. In particular, it is shown that if $R$ is an associated order with radical conductor ideal and full ring of integers $\overline{R}$, then the elasticities of $R[[x]]$ and $\overline{R}[[x]]$ are equal, and so we can conclude that if $R$ is an order with radical conductor ideal, then $R$ is an HFD if and only if $R[[x]]$ is an HFD; as a corollary, we note that this is universally true in the case of quadratic orders.

In the sixth and final section, we provide some specialty results in the case in which we have a non-radical conductor and construct some examples to illustrate some intricacies and hazards moving forward in this case.

\section{Associated Orders}
    When considering the factorization of elements in an order (or its ring of power series), certain types of subrings will be very useful to consider. The first will look familiar from the main result of \cite{halter-koch} and describes a subring whose multiplicative structure is, in a sense, ``close" to that of the entire ring.
    \begin{definition}
        \label{associated subring}
        Let $T$ be a commutative ring with identity and $R$ a subring of $T$ (not necessarily with identity). We say that $R$ is an \term{associated subring} of $T$ if $T=R\cdot U(T)$; that is, if any element $t\in T$ can be expressed as $t=ru$ for some $r\in R$ and $u\in U(T)$. Equivalently, for any element $t\in T$, there exists $u\in U(\Rbar)$ such that $tu\in R$.
    \end{definition}
    Related to this type of subring (though this relation may not be immediately apparent) are the following.
    \begin{definition}
        \label{ideal-preserving subring}
        Let $T$ be a commutative ring and $R$ a subring of $T$ (neither assumed to have identity). We say that $R$ is an \term{ideal-preserving subring} of $T$ if, for any $T$-ideals $J_1\not\subseteq J_2$, $R\cap J_1\not\subseteq R\cap J_2$ (equivalently, $R\cap J_1\not\subseteq J_2$). 
    \end{definition}
    \begin{definition}
        \label{locally associated subring}
        Let $T$ be a commutative ring with identity, $R$ a subring of $T$ with identity, and $I=(R:T)$, the conductor ideal from $T$ into $R$. We say that $R$ is a \term{locally associated subring} of $T$ if
        $$\quot{U(T)}{U(R)}\cong \quot{U(\sfrac{T}{I})}{U(\sfrac{R}{I})}.$$
    \end{definition}
    Though such subrings are interesting in their own right and warrant further investigation, we will focus here on the case when $R$ is an order in a number field $K$ and $T=\Rbar=\cO_K$. In this case, we can produce a number of equivalent characterizations of these properties. These characterizations, as well as the properties they allow us to prove, will be useful to us throughout this paper.
    \begin{theorem}
        \label{associated order}
        Let $R$ be an order in a number field $K$. The following are equivalent.
        \begin{enumerate}
            \item $R$ is an associated subring of $\Rbar$.
            \item For any $t\in \Rbar$ and subset $\{u_\alpha\}_{\alpha\in \Gamma}\subseteq U(\Rbar)$ containing a representative from each coset of $\quot{U(\Rbar)}{U(R)}$, there exist $\alpha\in \Gamma$ and $\beta\in I$ such that $u_\alpha(t+\beta)\in R$. That is, in a slight abuse of notation, $\quot{\Rbar}{I}=\quot{R}{I}\cdot \quot{U(\Rbar)}{U(R)}$.
        \end{enumerate}
        For simplicity, we will refer to any such order $R$ as an \term{associated order}.
    \end{theorem}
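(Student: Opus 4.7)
The plan is to unwind the proposed equivalence (2) so it reads as an existence statement about lifts, and then observe that it differs from (1) only by a translation by an element of $I$, which is absorbed harmlessly because $I \subseteq R$ and $I$ is an ideal of $\overline{R}$. So the whole argument should come down to a direct verification in two short directions, using only definitional properties of the conductor.

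First I would restate (2) without the quotient formalism: the subset $\{u_\alpha\}$ is a set of coset representatives for $U(\overline{R})/U(R)$ in $U(\overline{R})$, and the assertion is that for each $t \in \overline{R}$ there exist an index $\alpha$ and an element $\beta \in I$ with $u_\alpha(t+\beta) \in R$. The two key observations I would record before starting are: (a) $I \subseteq R$ by definition of the conductor, so any element of $I$ already lies in $R$; and (b) $u_\alpha I = I$ for each $\alpha$, since $I$ is an $\overline{R}$-ideal and $u_\alpha \in U(\overline{R})$. Fact (b) lets me freely move a factor of $u_\alpha$ in and out of the $\beta$-error term.

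For (1) $\Rightarrow$ (2): given $t \in \overline{R}$, use (1) to pick $u \in U(\overline{R})$ with $ut \in R$. Write $u = u_\alpha v$ with $v \in U(R)$, valid because $\{u_\alpha\}$ is a system of coset representatives. Then $u_\alpha t = v^{-1}(ut) \in R$ because $v^{-1} \in U(R) \subseteq R$ and $ut \in R$. Hence $\beta = 0 \in I$ works. For (2) $\Rightarrow$ (1): given $t$, choose $\alpha$ and $\beta \in I$ with $u_\alpha(t + \beta) \in R$. Then $u_\alpha t = u_\alpha(t + \beta) - u_\alpha \beta$; the first summand is in $R$ by hypothesis and the second summand is in $I \subseteq R$ by observation (a) above. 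So $u_\alpha t \in R$ with $u_\alpha \in U(\overline{R})$, giving (1).

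I do not anticipate a real obstacle: once one spells out the ``slight abuse of notation'' in (2) as a lifting statement modulo $I$, the argument is essentially forced by the definition of the conductor. The only subtlety worth flagging in the write-up is why the $\beta$-term is present at all in (2)—namely, that it lets one phrase the condition purely in terms of the quotient $\overline{R}/I$, which is the form that will be leveraged later (for instance, when combining this with locally associated and ideal-preserving properties). In the proof itself, however, the $\beta$ never has to be chosen nontrivially in either direction.
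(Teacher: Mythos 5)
Your proposal is correct and follows essentially the same route as the paper's own proof: $\beta=0$ suffices in the direction $(1)\Rightarrow(2)$ after adjusting $u$ by a unit of $R$ to land on a coset representative, and in the direction $(2)\Rightarrow(1)$ one absorbs the error term via $u_\alpha\beta\in I\subseteq R$. No gaps.
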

    \begin{proof}
        First, assume that $R$ is an associated subring of $\Rbar$. Also assume that we have a subset $\{u_\alpha\}_{\alpha\in\Gamma}\subseteq U(\Rbar)$ as described in Condition 2. Then for any $t\in \Rbar$, there exists some $u\in U(\Rbar)$ such that $tu=r\in R$. Now select $\alpha\in\Gamma$ such that $u_\alpha\equiv u\modulo{U(R)}$. Then there exists some $v\in U(R)$ such that $uv=u_\alpha$, so $tu_\alpha=tuv=rv\in R$. Then letting $\beta=0\in I$, we have that $u_\alpha(t+\beta)\in R$, so $1\implies 2$.

        Now assume that Condition 2 holds and let $t\in \Rbar$. For any subset $\{u_\alpha\}_{\alpha\in\Gamma}\subseteq U(\Rbar)$ as above, we can select $u_\alpha$ and $\beta\in I$ such that $u_\alpha(t+\beta)=r\in R$. Then $u_\alpha t=r-u_\alpha \beta$. Since $\beta\in I$, $u_\alpha\beta\in R$, so $u_\alpha t\in R$. Then $R$ must be an associated subring of $\Rbar$, so $2\implies 1$.
    \end{proof}

    \begin{theorem}
        \label{ideal-preserving order}
        Let $R$ be an order in a number field $K$ with conductor ideal $I$. The following are equivalent.
        \begin{enumerate}
            \item $R$ is an ideal-preserving subring of $\Rbar$.
            \item For any prime $\Rbar$-ideals $P_1\neq P_2$ dividing $I$, $R\cap P_1\not\subseteq P_2$ and $R\cap P_1\nsubseteq P_1^2$.
            \item If $I=P_1^{a_1}\dots P_k^{a_k}$ is the factorization of $I$ into prime $\Rbar$-ideals, then $R\cap P_i\nsubseteq P_i^2$ for $1\leq i\leq k$ and $$\quot{R}{I}\cong \prod_{i=1}^k\quot{R+P_i^{a_i}}{P_i^{a_i}}\cong \prod_{i=1}^k \quot{R}{R\cap P_i^{a_i}}.$$
        \end{enumerate}
        For simplicity, we will refer to any such order as an \term{ideal-preserving order}.
    \end{theorem}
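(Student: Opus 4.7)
The plan is to prove the cyclic chain $(1) \Rightarrow (2) \Rightarrow (3) \Rightarrow (1)$. The first implication is immediate from the definition of ideal-preserving: in the Dedekind domain $\Rbar$, distinct primes $P_1 \neq P_2$ satisfy $P_1 \not\subseteq P_2$ and every nonzero prime satisfies $P \not\subseteq P^2$, so applying $(1)$ to the pairs $(P_1, P_2)$ and $(P_1, P_1^2)$ yields both clauses of $(2)$.

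For $(2) \Rightarrow (3)$, the non-containment $R \cap P_i \nsubseteq P_i^2$ is part of $(2)$. For the decomposition, set $\mathfrak{p}_i = R \cap P_i$. These are pairwise distinct maximal ideals of $R$: they are maximal because they contain the nonzero ideal $I$ in the one-dimensional Noetherian domain $R$, and distinct because $\mathfrak{p}_i = \mathfrak{p}_j$ for $i \neq j$ would yield $R \cap P_i \subseteq P_j$, contradicting $(2)$. The ideals $R \cap P_i^{a_i}$ are then pairwise comaximal in $R$: any maximal ideal of $R$ containing $(R \cap P_i^{a_i}) + (R \cap P_j^{a_j})$ must contain $I$ and so equal some $\mathfrak{p}_l$; by $(2)$, one can find $r \in R \cap P_i$ with $r \notin P_l$ whenever $l \neq i$, and then $r^{a_i} \in (R \cap P_i^{a_i}) \setminus \mathfrak{p}_l$ forces $l = i$, and symmetrically $l = j$, a contradiction. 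The Chinese remainder theorem in $R$, together with $\bigcap_i (R \cap P_i^{a_i}) = R \cap \bigcap_i P_i^{a_i} = R \cap I = I$, then gives the desired decomposition; the alternative form $(R + P_i^{a_i})/P_i^{a_i}$ follows from the second isomorphism theorem.

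The substantive direction is $(3) \Rightarrow (1)$. Given $\Rbar$-ideals $J_1 \not\subseteq J_2$, pick a prime $Q$ with $n := v_Q(J_1) < v_Q(J_2)$; it suffices to find $z \in R \cap J_1$ with $v_Q(z) = n$. If $Q$ does not divide $I$, or if $Q = P_i$ with $n \geq a_i$, then CRT in $\Rbar$ directly produces such a $z$ inside $I \subseteq R$ by prescribing $v_Q(z) = n$, $v_{Q'}(z) \geq v_{Q'}(J_1)$ at the other primes in the support of $J_1$, and $v_{P_l}(z) \geq a_l$ for every $l$. The delicate case is $Q = P_i$ with $n < a_i$, where the desired $z$ necessarily lies in $R \setminus I$. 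Here, using the first clause of $(3)$, pick $x \in R$ with $v_{P_i}(x) = 1$; the decomposition in $(3)$ then provides $z_0 \in R$ with $z_0 \equiv x^n \pmod{R \cap P_i^{a_i}}$ and $z_0 \equiv 0 \pmod{R \cap P_l^{a_l}}$ for $l \neq i$, giving $v_{P_i}(z_0) = n$ and $v_{P_l}(z_0) \geq a_l$. A second application of CRT in $\Rbar$ furnishes $w \in I$ satisfying $w \equiv -z_0$ modulo $Q'^{v_{Q'}(J_1)}$ for each $Q'$ in the support of $J_1$ outside $\{P_1, \dots, P_k\}$, and modulo $P_l^{v_{P_l}(J_1)}$ at the primes $P_l$ ($l \neq i$) for which $v_{P_l}(J_1) > a_l$. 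Then $z := z_0 + w$ lies in $R \cap J_1$, and since $v_{P_i}(w) \geq a_i > n$ no cancellation occurs at $P_i$, so $v_{P_i}(z) = n < v_{P_i}(J_2)$, giving $z \notin J_2$.

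The main obstacle will be verifying that $w \in I$ with the prescribed congruences exists. This reduces to checking $z_0 \in I + M$, where $M$ is the product of the $Q'^{v_{Q'}(J_1)}$ and the bad $P_l^{v_{P_l}(J_1)}$; using that $I + M = \prod_{l \text{ bad}} P_l^{a_l}$ in the Dedekind domain $\Rbar$, this reduces further to $v_{P_l}(z_0) \geq a_l$ at the bad $P_l$, which holds by construction of $z_0$. It is precisely in this compatibility check that the ring decomposition in $(3)$ is indispensable: without the freedom to prescribe residues modulo each $R \cap P_l^{a_l}$ independently, one cannot produce an element of $R \setminus I$ having controlled valuation at just one $P_i$ while having large valuation at all of the other $P_l$.
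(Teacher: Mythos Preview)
Your argument is correct, but you have run the cycle in the opposite direction from the paper: they prove $1\Rightarrow 3\Rightarrow 2\Rightarrow 1$, whereas you prove $1\Rightarrow 2\Rightarrow 3\Rightarrow 1$. For $(2)\Rightarrow(3)$ you establish comaximality of the ideals $R\cap P_i^{a_i}$ in $R$ (via lying-over to identify the maximal ideals of $R$ above $I$, and condition~(2) to separate them) and then invoke the Chinese remainder theorem \emph{inside} $R$; the paper instead proves $(1)\Rightarrow(3)$ by restricting the CRT isomorphism of $\Rbar/I$ and building idempotent-like elements $\alpha_i\in R$ by hand. Your route here is arguably cleaner. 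Conversely, for the return to $(1)$ the paper's $(2)\Rightarrow(1)$ is lighter: after extending~(2) to primes not dividing $I$, Prime Avoidance yields for each relevant prime $P_j$ an element $\beta_j\in R\cap P_j$ lying outside $P_j^2$ and outside every other relevant $P_l$, and a single product $\prod \beta_j^{a_j}$ lands in $R\cap J_1\setminus J_2$ in one stroke. Your $(3)\Rightarrow(1)$ reaches the same conclusion but through a two-stage construction (first $z_0\in R$ via the decomposition in~(3), then a correction $w\in I$ via CRT in $\Rbar$), together with a compatibility check on $I+M$. Both are valid; the paper's product-of-generators trick avoids the correction step, while your approach makes more visible why the ring decomposition in~(3) encodes exactly the freedom needed.
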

    \begin{proof}
        First, note that for each $1\leq i\leq k$, $$\quot{R}{R\cap P_i^{a_i}}\cong \quot{R+P_i^{a_i}}{P_i^{a_i}}.$$ This can be seen quite easily by considering the map $\phi:\quot{R}{R\cap P_i^{a_i}}\to\quot{R+P_i^{a_i}}{P_i^{a_i}}$ defined by $\phi(r+R\cap P_i^{a_i})=r+P_i^{a_i}$, which is an isomorphism (even without assuming that $R$ is ideal-preserving). Then the second isomorphism in Condition 3 will always hold.

        Now assume that $R$ is an ideal-preserving subring of $\Rbar$. Note that for any prime ideal $P_i|I$, $P_i\nsubseteq P_i^2$, so $R\cap P_i\nsubseteq R\cap P_i^2$ by definition of an ideal-preserving subring. Then to show that Condition 3 holds, we only need to show that $$\quot{R}{I}\cong \prod_{i=1}^k\quot{R+P_i^{a_i}}{P_i^{a_i}}.$$ First, note that by the Chinese Remainder Theorem, $$\quot{\Rbar}{I}\cong \prod_{i=1}^k\quot{\Rbar}{P_i^{a_i}}.$$ Let $\pi:\quot{\Rbar}{I}\to \prod_{i=1}^k\quot{\Rbar}{P_i^{a_i}}$ denote the canonical isomorphism which projects $\Rbar$ onto each coordinate, and denote by $\tau:\quot{R}{I}\to \prod_{i=1}^k\quot{R+P_i^{a_i}}{P_i^{a_i}}$ the restriction of $\pi$ to $\quot{R}{I}$ (it should be clear that this codomain for $\tau$ is appropriate). As a restriction of an injective homomorphism, $\tau$ will automatically be an injective homomorphism as well. We need to show that $\tau$ is surjective. To do so, we will show that for each $1\leq i\leq k$, there exists some $\alpha_i\in R$ such that $\alpha_i\equiv 1\modulo{P_i^{a_i}}$ and $\alpha_i\equiv 0\modulo{P_j^{a_j}}$ for each $j\neq i$.

        For any $1\leq i\leq k$, note that $IP_i^{-a_i}=\prod_{j\neq i}P_j^{a_j}\nsubseteq P_i.$ Then since $R$ is an ideal-preserving subring of $\Rbar$, $R\cap IP_i^{-a_i}\nsubseteq P_i$. Then let $x_i\in R\cap IP_i^{-a_i}\backslash P_i$. Since $x_i\notin P_i$, $x_i+P_i^{a_i}\in U(R+P_i^{a_i})$; let $y_i\in R+P_i^{a_i}$ such that $y_i+P_i^{a_i}=(x+P_i^{a_i})^{-1}$. Without loss of generality, since $y_i$ only needs to be in a specific ideal class of $R+P_i^{a_i}$ modulo $P_i^{a_i}$, assume that $y_i\in R$. Then let $\alpha_i=x_iy_i$. Since $x_i\in P_j^{a_j}$ for every $j\neq i$, so is $\alpha_i$. Since $x_i,y_i\in R$, so is $\alpha_i$. Finally, since $y_i+P_i^{a_i}=(x+P_i^{a_i})^{-1}$, $\alpha_i=x_iy_i\equiv 1\modulo{P_i^{a_i}}$. Then $\tau(\alpha_i+I)$ is zero in every coordinate aside from the $i^{th}$, in which it is $1+P_i^{a_i}$. Since it is possible to construct such $\alpha_i$ for each $1\leq i\leq k$, it follows that $\tau$ is surjective. Thus, $\tau$ is an isomorphism and $1\implies 3$.

        Now assume that Condition 3 holds. To show that Condition 2 holds, it will suffice to show that for prime $\Rbar$-ideals $P_1\neq P_2$ dividing $I$, $R\cap P_1\nsubseteq P_2$. Let $I=P_1^{a_1}P_2^{a_2}\dots P_k^{a_k}$ and $\tau$ be the isomorphism described previously. Since the isomorphisms in Condition 3 hold, there must exist some $\alpha_1\in R$ such that $\tau(\alpha_1+I)$ is 0 in the first coordinate and 1 in the second. Then $\alpha\in R\cap P_1\backslash P_2$, meaning that $R\cap P_1\nsubseteq P_2$. Then $3\implies 2$.

        Now assume that Condition 2 holds. First, we will show that for any prime $\Rbar$-ideals $P_1\neq P_2$, $R\cap P_1\nsubseteq P_2$ and $R\cap P_1\nsubseteq P_1^2$. If both $P_1$ and $P_2$ divide $I$, then we are done. Now assume that $P_2\nmid I$ and let $\beta\in P_1\backslash P_2$, $\gamma\in I\backslash P_2$, and $\alpha=\beta\gamma$. Note that $\alpha\in I\subseteq R$ and $\alpha\notin P_2$. Then $\alpha\in R\cap P_1\backslash P_2$, so $R\cap P_1\nsubseteq P_2$. Now assume that $P_1$ does not divide $I$. Letting $\beta\in P_1\backslash P_1^2$, $\gamma\in I\backslash P_1$, and $\alpha=\beta\gamma$, we have that $\alpha\in R\cap P_1\backslash P_1^2$, so $R\cap P_1\nsubseteq P_1^2$. Now if $P_1\nmid I$ and $P_2|I$, let $\alpha\in P_1$ and $\beta\in I$ such that $\alpha+\beta=1$. Then $\alpha=1-\beta\in R\cap P_1$, and since $\beta\in P_2$, $\alpha=1-\beta\notin P_2$. Then $R\cap P_1\nsubseteq P_2$. This covers all cases, so for any prime $\Rbar$-ideals $P_1\neq P_2$, $R\cap P_1\nsubseteq P_2$ and $R\cap P_1\nsubseteq P_1^2$.

        Now let $J_1\nsubseteq J_2$ be $\Rbar$-ideals. Let $J_1=P_1^{a_1}\dots P_r^{a_r}$ and $J_2=P_1^{b_1}\dots P_r^{b_r}$, with each $P_i$ a prime $\Rbar$ ideal and $a_i,b_i\in\bN_0$ for $1\leq i\leq r$. Since $J_2\nmid J_1$, there must exist some $1\leq i\leq k$ such that $b_i> a_i$; without loss of generality, assume $b_1> a_1$. Then let $\beta_1\in R\cap P_1\backslash P_1^2$, and for each $2\leq i\leq k$, let $\beta_i\in R\cap P_i\backslash P_1$. Then $\alpha=\beta_1^{a_1}\dots \beta_r^{a_r}\in R\cap J_1$. However, since $\alpha\in P_1^{a_1}\backslash P_1^{a_1+1}$ and $P_1^{b_1}\subseteq P_1^{a_1}+1$, $\alpha\notin J_2$. Then $R\cap J_1\nsubseteq J_2$, so $R$ is an ideal-preserving subring of $\Rbar$. Thus, $2\implies 1$.
    \end{proof}
    For similar characterizations of orders which are locally associated subrings of their integral closures, we require the following result from \cite{neukirch}.
    \begin{lemma}
        \label{exact sequence}
        Let $R$ be an order in a number field $K$ with conductor ideal $I$. Then there is an exact sequence
        $$1\to U(R)\to U(\Rbar)\times U(\quot{R}{I})\to U(\quot{\Rbar}{I})\to\normalsize\Cl(R)\to \Cl(\Rbar)\to 1$$
        Thus, the class numbers $\abs{\Cl(R)}$ and $\abs{\Cl(\Rbar)}$ are related as follows:
        $$\abs{\Cl(R)}=\abs{\Cl(\Rbar)}\frac{\abs{U(\quot{\Rbar}{I})}}{\abs{U(\quot{R}{I})}\cdot\abs{\quot{U(\Rbar)}{U(R)}}}.$$
    \end{lemma}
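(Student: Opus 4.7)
The plan is to construct the five maps in the sequence, verify exactness at each spot, and then extract the class-number identity by taking alternating products of orders (all groups beyond the leftmost inclusion are finite by Dirichlet's unit theorem and the finite index of $I$).

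First, I would define the arrows. The map $U(R)\to U(\Rbar)\times U(\quot{R}{I})$ is the diagonal $u\mapsto (u,\,u+I)$, which is clearly injective. Next, $U(\Rbar)\times U(\quot{R}{I})\to U(\quot{\Rbar}{I})$ sends $(v,\bar r)\mapsto \bar v\,\bar r^{-1}$; both factors become units modulo $I$, so the map is well-defined. The arrow $U(\quot{\Rbar}{I})\to \Cl(R)$ is the crux: given $\bar w\in U(\quot{\Rbar}{I})$, lift to $w\in \Rbar$ and form the $R$-ideal $\mathfrak a_w := wR + I$. Because $\bar w$ is a unit modulo $I$, $\mathfrak a_w$ is $R$-invertible (it is the unit ideal locally at every maximal ideal of $R$ not containing $I$ and behaves like the principal $w$-ideal at the remaining maximals), so one sends $\bar w$ to $[\mathfrak a_w]\in \Cl(R)$. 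Finally, $\Cl(R)\to \Cl(\Rbar)$ is extension of ideals.

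For exactness: at $U(R)$ injectivity is immediate. At $U(\Rbar)\times U(\quot{R}{I})$, the kernel consists of pairs $(v,\bar r)$ with $\bar v=\bar r$ in $\quot{\Rbar}{I}$, forcing $v-r\in I\subseteq R$, hence $v\in R\cap U(\Rbar)=U(R)$. At $U(\quot{\Rbar}{I})$, a class $\bar w$ comes from some $(v,\bar r)$ precisely when $\mathfrak a_w$ is principal in $R$. At $\Cl(R)$, the class of an $R$-ideal $\mathfrak a$ maps to the trivial class in $\Cl(\Rbar)$ iff $\mathfrak a\Rbar=(w)$ for some $w\in \Rbar$, which after reducing $w$ modulo $I$ produces the required preimage in $U(\quot{\Rbar}{I})$. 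Surjectivity of $\Cl(R)\to \Cl(\Rbar)$ holds because every ideal class in $\Rbar$ has a representative coprime to $I$, and such ideals correspond to $R$-ideals by contraction $\mathfrak b\mapsto \mathfrak b\cap R$.

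The class-number identity then follows mechanically: replace the injection $U(R)\hookrightarrow U(\Rbar)$ in the sequence with the finite quotient $\quot{U(\Rbar)}{U(R)}$, observe that the resulting four-term exact sequence of finite abelian groups has alternating product of orders equal to one, and solve for $\abs{\Cl(R)}$. The main obstacle will be the map $U(\quot{\Rbar}{I})\to \Cl(R)$: one must verify that $\mathfrak a_w$ is $R$-invertible and that the ideal class $[\mathfrak a_w]$ is independent of the chosen lift $w$. Both rest on the standard bijection between invertible $R$-ideals coprime to $I$ and invertible $\Rbar$-ideals coprime to $I$, which in turn uses the defining property of the conductor that $I$ is simultaneously an ideal of $R$ and of $\Rbar$.
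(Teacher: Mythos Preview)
The paper does not give its own proof of this lemma; it is quoted directly from Neukirch and used as a black box. Your sketch is precisely the standard argument one finds there: define the diagonal inclusion, the ``difference'' map into $U(\Rbar/I)$, the ideal-class map $\bar w\mapsto[wR+I]$, and extension of scalars, then check exactness and read off the alternating product of orders.

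Two small points worth tightening. First, your opening remark that ``all groups beyond the leftmost inclusion are finite'' is not literally true, since $U(\Rbar)\times U(R/I)$ is generally infinite; what Dirichlet gives you is that $U(\Rbar)/U(R)$ is finite, and you do use exactly this when you pass to the four-term finite sequence at the end, so the mathematics is fine but the phrasing should be adjusted. Second, your verification of exactness at $\Cl(R)$ is the one place where a reader will want more than a sentence: having chosen $\mathfrak a$ coprime to $I$ with $\mathfrak a\Rbar=w\Rbar$, you still need to check that $[\mathfrak a]=[wR+I]$ in $\Cl(R)$, not merely that both lie in the kernel. The cleanest route is to observe that $w^{-1}\mathfrak a$ is an invertible $R$-ideal sandwiched between $I$ and $\Rbar$ with extension $\Rbar$, and then argue (via local principality or the coprime-to-$I$ correspondence you mention) that any such ideal has the form $w'R+I$. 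This is exactly the ``standard bijection'' you invoke in your final paragraph, so the ingredients are already in your outline; they just need to be deployed at this step rather than only at the surjectivity of $\Cl(R)\to\Cl(\Rbar)$.
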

    \begin{theorem}
        \label{locally associated orders}
        Let $R$ be an order in a number field $K$ with conductor ideal $I$. The following are equivalent.
        \begin{enumerate}
            \item $R$ is a locally associated subring of $\Rbar$.
            \item Every coset in $\quot{U(\sfrac{\Rbar}{I})}{U(\sfrac{R}{I})}$ contains a unit in $\Rbar$; that is, for any $t+I\in U(\quot{\Rbar}{I})$, there exists some $r+I\in U(\quot{R}{I})$ and $\beta\in I$ such that $tr+\beta\in U(\Rbar)$.
            \item If $t\in \Rbar$ is relatively prime to $I$, i.e. $t\Rbar+I=\Rbar$, then there exists $r\in R$ relatively prime to $I$, i.e. $rR+I=R$, and $u\in U(\Rbar)$ such that $t=ru$.
            \item $\abs{\quot{U(\Rbar)}{U(R)}}=\frac{\abs{U(\quot{\Rbar}{I})}}{\abs{U(\quot{R}{I})}}$.
            \item $\abs{\Cl(\Rbar)}=\abs{\Cl(R)}$.
            \item $\Cl(\Rbar)\cong \Cl(R)$.
        \end{enumerate}
        For simplicity, we will refer to any such order as a \term{locally associated order}.
    \end{theorem}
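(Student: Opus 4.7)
The plan is to route every equivalence through the natural homomorphism
$$\phi\colon \quot{U(\Rbar)}{U(R)}\longrightarrow \quot{U(\sfrac{\Rbar}{I})}{U(\sfrac{R}{I})}, \qquad u\,U(R)\mapsto (u+I)\,U(\sfrac{R}{I}),$$
and then to invoke the exact sequence of Lemma~\ref{exact sequence} for the class-group comparisons. The first step is to check that $\phi$ is always injective: if $u\in U(\Rbar)$ represents the trivial coset, then $u+I=r+I$ for some $r$ with $r+I\in U(\sfrac{R}{I})$, which forces $u\in R+I=R$; picking $s\in R$ with $rs\equiv 1\pmod{I}$ and writing $us=1+\gamma$ with $\gamma\in I$, the conductor property $I\cdot\Rbar\subseteq R$ gives $u^{-1}\gamma\in R$, whence $u^{-1}=s-u^{-1}\gamma\in R$ and $u\in U(R)$.

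With $\phi$ injective, its surjectivity is precisely Condition~(2), so (1)~$\Leftrightarrow$~(2) is immediate. For (2)~$\Rightarrow$~(3), given $t\in\Rbar$ with $t\Rbar+I=\Rbar$, the class $t+I$ lies in $U(\sfrac{\Rbar}{I})$, so surjectivity supplies $v\in U(\Rbar)$ with $tv^{-1}+I\in U(\sfrac{R}{I})\subseteq \sfrac{R}{I}$; this gives $tv^{-1}\in R+I=R$, and setting $r=tv^{-1}$, $u=v$ yields the desired factorization. For (3)~$\Rightarrow$~(2), write $t=ru$ as in (3) and choose $r'\in R$ with $rr'=1+\beta_0$, $\beta_0\in I$; then $tr'-u\beta_0=u\in U(\Rbar)$ with $-u\beta_0\in I$ (again using $I\cdot\Rbar\subseteq R$), which is the content of (2).

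The equivalence (1)~$\Leftrightarrow$~(4) then follows from the injectivity of $\phi$, after observing that $U(\sfrac{R}{I})$ embeds into $U(\sfrac{\Rbar}{I})$, so that the target of $\phi$ has order $|U(\sfrac{\Rbar}{I})|/|U(\sfrac{R}{I})|$. The equivalence (4)~$\Leftrightarrow$~(5) is a direct rearrangement of the cardinality formula in Lemma~\ref{exact sequence}. Finally, Lemma~\ref{exact sequence} supplies a surjection $\Cl(R)\twoheadrightarrow\Cl(\Rbar)$ of finite groups (class groups of orders in a number field are finite), so equality of orders promotes this surjection to an isomorphism, delivering (5)~$\Leftrightarrow$~(6).

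The main obstacle I expect is the injectivity of $\phi$ and the symmetric manipulations in (2)~$\Leftrightarrow$~(3); each requires a careful invocation of $I\cdot\Rbar\subseteq R$ at the right moment to pull an inverse or a perturbation from $\Rbar$ back into $R$. Once this bookkeeping is in hand, the remaining implications are cardinality counts and a short chase through the exact sequence.
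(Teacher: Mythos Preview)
Your proposal is correct and follows essentially the same route as the paper: both arguments hinge on the natural map $\phi$ being an injective homomorphism (the paper merely asserts this is ``straightforward,'' while you spell it out), then cycle through $(1)\Leftrightarrow(2)\Leftrightarrow(3)$ via surjectivity of $\phi$ and pull the remaining equivalences from the cardinality formula in Lemma~\ref{exact sequence}. Your treatment of $(5)\Rightarrow(6)$ via the surjection $\Cl(R)\twoheadrightarrow\Cl(\Rbar)$ is in fact more careful than the paper's, which declares this equivalence ``trivial'' without explicitly invoking the surjection needed to promote equal cardinality to isomorphism.
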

    \begin{proof}
        First, note that the groups $\Cl(\Rbar)$, $\Cl(R)$, $\quot{U(\Rbar)}{U(R)}$, $U(\quot{\Rbar}{I})$, and $U(\quot{R}{I})$ are finite. Then $1\iff 4$ and $5\iff 6$ are trivial; $4\iff 5$ follows immediately from the lemma.

        Now assume that $R$ is a locally associated subring of $\Rbar$, and let $(t+I)U(\quot{R}{I})\in \quot{U(\sfrac{\Rbar}{I})}{U(\sfrac{R}{I})}$ for some $t\in \Rbar$. Let $\phi:\quot{U(\Rbar)}{U(R)}\to \quot{U(\sfrac{\Rbar}{I})}{U(\sfrac{R}{I})}$ be the homomorphism defined by $\phi(u\cdot U(R))=(u+I)U(\quot{R}{I})$. It is straightforward to show that this is an injective homomorphism; since it maps a finite group injectively into a finite group of the same order, $\phi$ must be an isomorphism. Then there must be some $u\in U(\Rbar)$ such that $\phi(u\cdot U(R))=(u+I)U(\quot{R}{I})=(t+I)U(\quot{R}{I})$. Thus, there must be some $r+I\in U(\quot{R}{I})$ such that $u+I=(t+I)(r+I)=tr+I$, so there is some $\beta\in I$ such that $u=tr+\beta\in U(\Rbar)$. Then $1\implies 2$.

        Now assume that Condition 2 holds, and let $t\in \Rbar$ be relatively prime to $I$. Then $t+I\in U(\quot{R}{I})$, so there must exist $r+I\in U(\quot{R}{I})$ and $\beta\in I$ such that $tr+\beta=u\in U(\Rbar)$. Let $s\in R$ be such that $s+I=(r+I)^{-1}\in U(\quot{R}{I})$ and note that $s$ is relatively prime to $I$ in $R$. Then for some $\gamma\in I$, $t+\gamma=su\implies t=(s-u^{-1}\gamma)u$. Since $s$ is relatively prime to $I$ in $R$ and $u^{-1}\gamma\in I$, then $s-u^{-1}\gamma$ is also relatively prime to $I$ in $R$. Then $2\implies 3$.

        Finally, assume that Condition 3 holds. As discussed above, the map $\phi:\quot{U(\Rbar)}{U(R)}\to \quot{U(\sfrac{\Rbar}{I})}{U(\sfrac{R}{I})}$ such that $\phi(u\cdot U(R))=(u+I)U(\quot{R}{I})$ is always an injective homomorphism. We need to show that this map is surjective. Then let $t+I\in U(\quot{\Rbar}{I})$, i.e. $t\in\Rbar$ is relatively prime to $I$. Then there must exist some $r\in R$ relatively prime to $I$ and $u\in U(\Rbar)$ such that $t=ru$. Letting $s+I=(r+I)^{-1}\in U(\quot{R}{I})$, we have that $(t+I)(s+I)=u+I$. Then $\phi(u\cdot U(R))=(u+I)U(\quot{R}{I})=(t+I)U(\quot{R}{I})$, so $\phi$ is a surjective map. Thus, $\phi$ is an isomorphism and $3\implies 1$.
    \end{proof}
    Before exploring some helpful properties of these types of orders, we should first consider the relationships among these properties.
    \begin{theorem}
        \label{ao implies ipo and lao}
        Let $R$ be an order in a number field $K$. If $R$ is an associated order, then $R$ is both ideal-preserving and locally associated.
    \end{theorem}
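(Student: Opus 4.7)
The plan is to establish the two implications in turn, using the ideal-preserving property (once proved) as a stepping stone toward the locally associated property.

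For ideal-preserving, a direct argument suffices. Given $\Rbar$-ideals $J_1 \not\subseteq J_2$, I choose $t \in J_1 \setminus J_2$ and invoke the associated property to write $t = ru$ with $r \in R$ and $u \in U(\Rbar)$. Since $J_1$ is an $\Rbar$-ideal and $u^{-1} \in \Rbar$, we have $r = tu^{-1} \in J_1$, hence $r \in R \cap J_1$; moreover $r \notin J_2$, for otherwise $t = ru$ would lie in $J_2$. This gives $R \cap J_1 \not\subseteq J_2$, establishing Condition 1 of Theorem \ref{ideal-preserving order}.

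For locally associated, I would verify Condition 3 of Theorem \ref{locally associated orders}. Given $t \in \Rbar$ with $t\Rbar + I = \Rbar$, applying the associated property yields $t = ru$ with $r \in R$ and $u \in U(\Rbar)$. Since $u^{-1} \in \Rbar$, $r\Rbar = t\Rbar$, so $r\Rbar + I = \Rbar$; equivalently, $r$ avoids every prime $P_i$ of $\Rbar$ in the factorization $I = P_1^{a_1} \cdots P_k^{a_k}$. It remains to upgrade this to $rR + I = R$.

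This upgrade is the substantive step of the proof. Here I plan to use the ideal-preserving property just established, invoking Theorem \ref{ideal-preserving order}(3) to get $R/I \cong \prod_{i=1}^k R/(R \cap P_i^{a_i})$. Each factor $R/(R\cap P_i^{a_i})$ is a local ring: since $R$ is one-dimensional and $\sqrt{R \cap P_i^{a_i}} = R \cap P_i$ is a maximal ideal of $R$, it is the unique prime of $R$ containing $R \cap P_i^{a_i}$. Consequently $r + (R \cap P_i^{a_i})$ is a unit in $R/(R \cap P_i^{a_i})$ if and only if $r \notin R \cap P_i$, equivalently $r \notin P_i$. Since this holds for every $i$, $r + I$ is a unit in $R/I$, which is precisely $rR + I = R$. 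The main obstacle is this last passage from invertibility in $\Rbar/I$ to invertibility in $R/I$; in principle the subring $R/I \subseteq \Rbar/I$ could have strictly fewer units than $\Rbar/I \cap (R/I)$, but the product decomposition into local rings whose maximal ideals are the traces of the $P_i$ rules this out.
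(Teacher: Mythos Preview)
Your argument for the ideal-preserving half is essentially identical to the paper's: pick an element of $J_1\setminus J_2$, multiply by a unit of $\Rbar$ to land in $R$, and observe that $\Rbar$-ideals absorb units. Nothing to add there.

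For the locally associated half your argument is correct, but it is considerably more elaborate than what the paper does. The paper handles the passage from $r+I\in U(\Rbar/I)$ to $r+I\in U(R/I)$ in one line: since $\Rbar$ is integral over $R$, the quotient $\Rbar/I$ is integral over $R/I$, and an integral extension reflects units (if $a\in A\subseteq B$ with $B$ integral over $A$ and $a\in U(B)$, then $a\in U(A)$). You instead feed the freshly-proved ideal-preserving property back into Theorem~\ref{ideal-preserving order}(3) to decompose $R/I$ as a product of local rings and then check invertibility factor by factor. This works, and your verification that each $R/(R\cap P_i^{a_i})$ is local (via one-dimensionality of $R$ and $\sqrt{R\cap P_i^{a_i}}=R\cap P_i$) is sound. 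The trade-off is that your argument makes the two halves of the theorem logically dependent, whereas the paper's integrality observation keeps them independent and is the more portable fact to remember: it applies to any integral ring extension with a common ideal, without any structural hypothesis on $R/I$.
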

    \begin{proof}
        Suppose that $R$ is an associated order, and let $J_1\nsubseteq J_2$ be $\Rbar$-ideals. Select any $\alpha\in J_1\backslash J_2$ and let $u\in U(\Rbar)$ be such that $\alpha u\in R$. Since $\alpha\in J_1$, $\alpha u\in J_1$ as well. if $\alpha u=\beta\in J_2$, then $\alpha=u^{-1}\beta\in J_2$, a contradiction. Then $\alpha u\in R\cap J_1\backslash J_2$, so $R\cap J_1\nsubseteq J_2$. Thus, $R$ is an ideal-preserving order.

        Now let $t\in \Rbar$ be relatively prime to $I$. Since $R$ is an associated order, there exists some $r\in R$ and $u\in U(\Rbar)$ such that $t=ru$. Note that $r+I=u^{-1}t+I\in \quot{R}{I}\cap U(\quot{\Rbar}{I})$. Since $\Rbar$ is integral over $R$ (and thus $\quot{\Rbar}{I}$ is integral over $\quot{R}{I}$), this means that $r+I\in U(\quot{R}{I})$, i.e. $r$ is relatively prime to $I$ in $R$. Then by the third equivalent condition in Theorem \ref{locally associated orders}, $R$ is a locally associated order.
    \end{proof}
    It is worth noting that there exist ideal-preserving orders which are neither associated nor locally associated, and there exist locally associated orders which are neither associated nor ideal-preserving. The following examples illustrate these points.

    \begin{example}
        Let $R=\bZ[5\sqrt{2}]$; then $\Rbar=\bZ[\sqrt{2}]$, $I=(R:\Rbar)=5\Rbar$, and $U(\Rbar)=\{\pm(1+\sqrt{2})^k|k\in\bZ\}$. Note that 5 is an inert prime in $\Rbar$, i.e. $I$ is a prime $\Rbar$-ideal. Then $R\cap I=I\nsubseteq I^2$, so Condition 2 from Theorem 2.5 tells us that $R$ is an ideal-preserving order. On the other hand, note that $(1+\sqrt{2})^2=3+2\sqrt{2}\notin R$ and $(1+\sqrt{2})^3=7+5\sqrt{2}\in R$. Then it is easy to see that $\abs{\quot{U(\Rbar)}{U(R)}}=3$. However, $\abs{U(\quot{\Rbar}{I})}=\abs{\quot{\Rbar}{I}}-1=24$ and $\abs{U(\quot{R}{I})}=\abs{\quot{R}{I}}-1=4$, so $$\abs{\quot{U(\Rbar)}{U(R)}}=3\neq 6=\frac{\abs{U(\quot{\Rbar}{I})}}{\abs{U(\quot{R}{I})}}.$$ Then Condition 4 from Theorem \ref{locally associated orders} tells us that $R$ is not locally associated (and thus not associated). 
    \end{example}

    \begin{example}
        Let $R=\bZ[2\sqrt{2}]$; then $\Rbar=\bZ[\sqrt{2}]$, $I=(R:\Rbar)=2\Rbar$, and $U(\Rbar)=\{\pm(1+\sqrt{2})^k|k\in\bZ\}$. The prime factorization of $I$ in $\Rbar$ is $I=(\sqrt{2})^2$, and $R\cap(\sqrt{2})=I=R\cap (\sqrt{2})^2$. Then $I$ is not ideal-preserving (and thus not associated). Now note that $U(\quot{\Rbar}{I})=\{1+I,(1+\sqrt{2})+I\}$. Since both $1,1+\sqrt{2}\in U(\Rbar)$, $R$ is a locally associated order by Condition 2 of Theorem \ref{locally associated orders}.
    \end{example}
    
    Among other things, these types of orders and their properties will be useful for drawing conclusions about the relationship between an order $R$, its integral closure $\Rbar$, and intermediate orders $T$ such that $R\subseteq T\subseteq \Rbar$. First, we have the following theorem which tells us that these properties are ``inherited" by such intermediate orders. For part of this proof, we will need the following lemmas.

    \begin{lemma}
        \label{rel prime is invertible}
        \cite{conrad} Let $R$ be an order in a number field $K$ with conductor ideal $I$. Then any $R$-ideal which is relatively prime to $I$ is invertible. That is, if $J$ is an ideal in $R$ such that $J+I=R$, then $JJ^{-1}=R$, with $J^{-1}=\{\alpha\in K|\alpha J\subseteq R\}$.
    \end{lemma}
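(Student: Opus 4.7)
The plan is to show that $JJ^{-1} = R$ (the inclusion $JJ^{-1}\subseteq R$ is immediate from the definition of $J^{-1}$). Since $JJ^{-1}$ is an $R$-ideal, it suffices to prove that $JJ^{-1}$ is not contained in any maximal ideal $\mathfrak{m}$ of $R$. I would split the verification into two cases, according to whether $\mathfrak{m}$ contains the conductor $I$.

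For a maximal ideal $\mathfrak{m}$ with $I\subseteq\mathfrak{m}$, the hypothesis $J+I = R$ immediately rules out $J\subseteq\mathfrak{m}$, so pick $j_0\in J\setminus\mathfrak{m}$. Since $J\subseteq R$, the element $1$ lies in $J^{-1}$, and hence $j_0 = j_0\cdot 1 \in JJ^{-1}$, giving $JJ^{-1}\not\subseteq\mathfrak{m}$.

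For a maximal ideal $\mathfrak{m}$ with $I\not\subseteq\mathfrak{m}$, the first step is the key identification $R_\mathfrak{m} = \Rbar_\mathfrak{m}$: choosing $i\in I\setminus\mathfrak{m}$, this element becomes a unit in $R_\mathfrak{m}$, and because $I\Rbar = I\subseteq R$, every $\bar r\in\Rbar$ satisfies $i\bar r\in R$, so $\bar r = i^{-1}(i\bar r)\in R_\mathfrak{m}$. Thus $R_\mathfrak{m}$ is a one-dimensional Noetherian local domain that is integrally closed, hence a DVR. Writing $J = (j_1,\dots,j_n)$ (available since an order is Noetherian), some generator $j_\ell$ has minimal valuation in $R_\mathfrak{m}$, so $JR_\mathfrak{m} = j_\ell R_\mathfrak{m}$. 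Clearing a common denominator yields $s\in R\setminus\mathfrak{m}$ with $sj_k/j_\ell\in R$ for every $k$; then $\alpha := s/j_\ell$ lies in $J^{-1}$ while $\alpha j_\ell = s$ exhibits an element of $JJ^{-1}$ outside $\mathfrak{m}$.

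The conceptual obstacle is the second case: one must convert the global coprimality hypothesis into local invertibility of $J$ at each maximal ideal not containing $I$. The mechanism is precisely the equality $R_\mathfrak{m} = \Rbar_\mathfrak{m}$ away from the conductor, which forces $R_\mathfrak{m}$ to inherit the Dedekind behavior of $\Rbar$. Once this localization identity is in hand, the rest is standard manipulation of fractional ideals in a DVR, and the global conclusion $JJ^{-1} = R$ follows from the absence of any maximal ideal containing $JJ^{-1}$.
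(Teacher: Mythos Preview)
The paper does not give its own proof of this lemma; it is simply quoted from \cite{conrad}. Your argument is correct and is essentially the standard one found there: invertibility is checked by showing $JJ^{-1}\not\subseteq\mathfrak{m}$ for every maximal ideal, the key point being that away from the conductor the localization $R_\mathfrak{m}$ coincides with $\Rbar_\mathfrak{m}$ and is therefore a DVR, while at maximal ideals containing $I$ the coprimality hypothesis $J+I=R$ already places an element of $J\subseteq JJ^{-1}$ outside $\mathfrak{m}$.
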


    \begin{lemma}
        \label{rel prime in ideal class}
        \cite{conrad} Let $R$ be an order in a number field $K$ and $J$ and ideal in $R$. Then every ideal class in $\Cl(R)$ contains a representative which is an integral ideal of $R$ that is relatively prime to $J$. That is, for any $A\in \Inv(R)$, there exists $\alpha\in K$ such that $\alpha A\subseteq R$ and $\alpha A+J=R$.
    \end{lemma}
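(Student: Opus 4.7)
The plan is to exploit invertibility of $A$ together with finiteness of the quotient $R/J$ to construct $\alpha$ as an element of $A^{-1}$ whose image generates $A^{-1}/JA^{-1}$ as an $R/J$-module. Once that generator is in hand, multiplying through by $A$ will simultaneously give integrality of $\alpha A$ and coprimality with $J$.

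The first step is to observe that for any nonzero ideal $J$ of $R$, the quotient $R/J$ is a finite (hence semi-local Artinian) ring. Pick a nonzero $\beta \in J$ and rearrange its minimal polynomial over $\bZ$: since the constant term $c_0$ is nonzero, the relation $c_0 = -\beta(\beta^{n-1}+c_{n-1}\beta^{n-2}+\cdots+c_1) \in \beta R \subseteq J$ shows that $\bZ \cap J$ is a nonzero ideal of $\bZ$, and finiteness of $R/J$ then follows because $R$ is a finitely generated $\bZ$-module. In particular, $R/J$ is semi-local.

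The next step is to show that $A^{-1}/JA^{-1}$ is free of rank one as an $R/J$-module. Since $A \in \Inv(R)$, we have $AA^{-1} = R$, so we may write $1 = \sum a_i b_i$ with $a_i \in A$ and $b_i \in A^{-1}$. For any prime $P$ of $R$, at least one summand $a_j b_j$ is a unit in $R_P$; for any $a \in A_P$ we then have $a = (a b_j)(a_j b_j)^{-1} a_j \in a_j R_P$, so $A_P = a_j R_P$. Thus $A$ (and hence $A^{-1}$) is locally principal, and a direct computation yields $(A^{-1}/JA^{-1})_P \cong R_P/JR_P$ for every prime $P$. Hence $A^{-1}/JA^{-1}$ is a locally free $R/J$-module of rank one; over the semi-local Artinian ring $R/J$, every finitely generated locally free module of constant rank is free, so $A^{-1}/JA^{-1} \cong R/J$.

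The conclusion is now immediate: choose $\alpha \in A^{-1}$ whose image generates $A^{-1}/JA^{-1}$, so that $\alpha R + JA^{-1} = A^{-1}$; multiplying through by $A$ yields $\alpha A + J = AA^{-1} = R$, and $\alpha A \subseteq R$ holds automatically from $\alpha \in A^{-1}$. The main obstacle is the local-to-global step — verifying that invertibility yields local principality for $A$ and that the resulting locally free module is globally free over $R/J$. Once this framework is set up, selecting $\alpha$ is essentially automatic, and the finiteness of $R/J$ established in the first step is what enables the passage from \emph{locally} free to \emph{globally} free.
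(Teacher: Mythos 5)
The paper does not prove this lemma at all --- it is quoted from \cite{conrad} as a known fact --- so there is no in-paper argument to compare against; your proof has to stand on its own, and it does. Each step checks out: the minimal-polynomial trick gives a nonzero rational integer in $J$, hence $R/J$ finite and Artinian; the partition-of-unity argument $1=\sum a_ib_i$ correctly shows an invertible ideal is locally principal, so $A^{-1}/JA^{-1}$ is locally free of rank one over $R/J$; over an Artinian ring (a finite product of local rings) such a module is genuinely free, so a generator $\alpha$ exists; and multiplying $\alpha R + JA^{-1}=A^{-1}$ by $A$ gives $\alpha A + J = R$ with $\alpha A\subseteq A^{-1}A=R$. (You tacitly assume $J\neq 0$, which is the only sensible reading of the statement and is how the paper uses it.) For comparison, the usual proof in the cited source is more hands-on but amounts to the same idea: for each of the finitely many primes $P_1,\dots,P_k$ containing $J$ one picks $\alpha_i\in A^{-1}\setminus P_iA^{-1}$ (possible by Nakayama, since $A^{-1}$ is invertible) and then assembles $\alpha$ by CRT/prime avoidance so that $\alpha\notin P_iA^{-1}$ for all $i$, which is exactly the condition that $\alpha$ generate $A^{-1}/JA^{-1}$. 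Your module-theoretic packaging trades that explicit CRT bookkeeping for the general fact that locally free of constant rank implies free over a semi-local Artinian ring; both routes are correct, and yours isolates more clearly where invertibility of $A$ is actually used.
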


    \begin{lemma}
        \label{surjective class map}
        Let $R$ be an order in a number field $K$ with conductor ideal $I$, and let $T$ be an intermediate order, $R\subseteq T\subseteq \Rbar$. Then the mapping $\phi:\Cl(R)\to \Cl(T)$ such that $\phi([J])=[JT]$ is a surjective homomorphism.
    \end{lemma}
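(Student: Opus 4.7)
The plan is to dispatch the algebraic bookkeeping first and then reduce surjectivity to one clever identity.

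For well-definedness and the homomorphism property, the key observation is that if $J$ is an invertible $R$-ideal with $R$-inverse $J^{-1}$, then $J^{-1}T$ serves as a $T$-inverse of $JT$ because $(JT)(J^{-1}T) = JJ^{-1}T = T$. Rescaling $J$ by $\alpha\in K^{\times}$ rescales $JT$ identically, so $\phi$ descends to ideal classes, and the identity $(J_1T)(J_2T) = J_1J_2T$ gives the homomorphism property. I would handle these routine items in a single paragraph.

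The substance of the proof is surjectivity. Given $[A]\in \Cl(T)$, I would first observe that $T$ is itself an order in $K$ and that $I$ is a $T$-ideal, since $IT\subseteq I\Rbar = I$. This lets me invoke Lemma \ref{rel prime in ideal class} for the order $T$ relative to the $T$-ideal $I$, replacing $A$ within its class by an integral representative satisfying $A + I = T$. I would then set $J := A\cap R$ and pick $a\in A$, $i\in I$ with $a+i=1$; since $a = 1-i\in R$, one has $a\in J$, hence $J+I = R$, and Lemma \ref{rel prime is invertible} then delivers that $J$ is invertible in $R$, so $[J]\in\Cl(R)$ is defined.

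The main obstacle is then proving $JT = A$, which forces $\phi([J]) = [A]$. The containment $JT\subseteq A$ is automatic from $J\subseteq A$, so the delicate step is the reverse. My approach is to decompose any $b\in A$ via the identity $1 = a+i$ as $b = ba + bi$. The first summand lands in $JT$ because $b\in A\subseteq T$ and $a\in J$, giving $ba\in T\cdot J = JT$. The second summand is where the conductor earns its keep: from $b\in A\subseteq \Rbar$ and $i\in I$ one obtains $bi\in I\Rbar = I\subseteq R$, while also $bi\in A$ trivially, so $bi\in A\cap R = J\subseteq JT$. The entire cleverness is concentrated in recognizing that the defining property $I\Rbar = I$ of the conductor is exactly what forces $iA\subseteq A\cap R$; the rest of the argument is an organized application of Lemmas \ref{rel prime is invertible} and \ref{rel prime in ideal class}.
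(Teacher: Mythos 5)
Your proof is correct and follows essentially the same route as the paper's: the same reduction via Lemmas \ref{rel prime in ideal class} and \ref{rel prime is invertible} to an integral $A$ with $A+I=T$, the same choice $J=A\cap R$, and your element-wise decomposition $b=ba+bi$ is exactly the paper's ideal identity $A=A(J+I)=AJ+AI\subseteq JT$ unpacked. The only (welcome) additions are your explicit checks that $I$ is a $T$-ideal and that $JT$ is $T$-invertible via $J^{-1}T$, which the paper leaves implicit.
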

    \begin{proof}
        First, note that $\phi$ is well-defined. If $[J_1]=[J_2]$ for two invertible fractional $R$-ideals $J_1$ and $J_2$, then $J_1=\alpha J_2$ for some $\alpha\in K$. Thus, $J_1T=\alpha J_2T$, so $\phi([J_1])=[J_1T]=[J_2T]=\phi([J_2])$. Furthermore, $\phi$ is a homomorphism, since for any invertible fractional $R$-ideals $J_1$ and $J_2$, $\phi([J_1J_2])=[J_1J_2T]=[J_1T][J_2T]=\phi([J_1])\phi([J_2])$. All that remains to show is that $\phi$ is surjective.

        Let $[A]\in \Cl(T)$. Since $I\subseteq T$, we can use Lemma \ref{rel prime in ideal class} to assume without loss of generality that $A$ is an integral ideal of $T$ which is relatively prime to $I$. The let $J=R\cap A$. Since $A$ is relatively prime to $I$, we know that there is some $\alpha\in A$ and $\beta\in I$ such that $\alpha+\beta=1$. Then $\alpha=1-\beta\in R$, so in fact $\alpha\in J$. Then $J$ is relatively prime to $I$ as an $R$-ideal. Lemma \ref{rel prime is invertible} tells us that $J\in \Inv(R)$. Moreover, note the following:
        $$A=AR=A(J+I)=AJ+AI\subseteq JT\subseteq A.$$ Then $A=JT$, so $\phi([J])=[JT]=[A]$. Thus, $\phi$ is a surjective homomorphism.
    \end{proof}

    \begin{theorem}
        \label{inheritance}
        Let $R$ and $T$ be two orders in the same number field $K$, with $R\subseteq T\subseteq \Rbar$. Then:
        \begin{enumerate}
            \item If $R$ is an associated order, then so is $T$.
            \item If $R$ is an ideal-preserving order, then so is $T$.
            \item IF $R$ is a locally associated order, then so is $T$.
        \end{enumerate}
    \end{theorem}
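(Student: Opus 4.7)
The plan is to handle the three implications (1), (2), (3) separately, as each one reduces immediately to a different one of the equivalent characterizations already established in this section. Throughout, a silent preliminary is that $\Rbar$ itself is the integral closure of every intermediate order $T$ with $R \subseteq T \subseteq \Rbar$: it is integrally closed and contains $T$, while being integral over $R \subseteq T$.

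For part (1), I would work directly from Definition~\ref{associated subring}. The hypothesis $\Rbar = R \cdot U(\Rbar)$ combines with $R \subseteq T \subseteq \Rbar$ to give $\Rbar = R \cdot U(\Rbar) \subseteq T \cdot U(\Rbar) \subseteq \Rbar$, so $T$ is associated. For part (2), I would argue straight from Definition~\ref{ideal-preserving subring}: given $\Rbar$-ideals $J_1 \not\subseteq J_2$, pick $\alpha \in (R \cap J_1) \setminus J_2$ (such an $\alpha$ exists by the ideal-preserving hypothesis on $R$); this same $\alpha$ lies in $(T \cap J_1) \setminus J_2$ because $R \subseteq T$, so $T$ is ideal-preserving.

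Part (3) is the only implication with any real content, and for it I would use condition (5) of Theorem~\ref{locally associated orders} together with Lemma~\ref{surjective class map}. The lemma applied to the pair $R \subseteq T$ yields a surjective homomorphism $\Cl(R) \twoheadrightarrow \Cl(T)$, and applied to the pair $T \subseteq \Rbar$ (legitimate by the preliminary observation above) yields a surjection $\Cl(T) \twoheadrightarrow \Cl(\Rbar)$. Composing gives a chain of surjections between finite groups; since $R$ being locally associated means $|\Cl(R)| = |\Cl(\Rbar)|$, the intermediate cardinality $|\Cl(T)|$ must coincide with both, and Theorem~\ref{locally associated orders} then delivers the locally-associated property for $T$.

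I do not foresee any genuine obstacle, as the bulk of the work has been absorbed into Theorems~\ref{associated order}--\ref{locally associated orders} and Lemma~\ref{surjective class map}, and this theorem is essentially a corollary of those characterizations. The only subtlety worth spelling out carefully is the identity $\overline{T} = \Rbar$ used to legitimize the second invocation of Lemma~\ref{surjective class map} in part (3), which follows because $\Rbar$ is integrally closed and every element integral over $T$ is a fortiori integral over $R$.
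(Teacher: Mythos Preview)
Your proof is correct and follows essentially the same approach as the paper: parts (1) and (2) are handled identically via the definitions, and part (3) is the same sandwich argument on $|\Cl(T)|$. The only minor difference is that for the inequality $|\Cl(T)| \geq |\Cl(\Rbar)|$ you invoke Lemma~\ref{surjective class map} a second time (applied to $T \subseteq \Rbar$), whereas the paper cites Lemma~\ref{exact sequence} instead; both are equally valid and the arguments are otherwise the same.
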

    \begin{proof}
        The first statement here is the easiest to see. If $R$ is an associated order, then for any $\alpha\in\Rbar$, there exists $u\in U(\Rbar)$ such that $u\alpha\in R\subseteq T$. Thus, $T$ is also an associated order.

        Now assume that $R$ is an ideal-preserving order, and let $J_1\nsubseteq J_2$ be $\Rbar$-ideals. Then $R\cap J_1\nsubseteq J_2$, i.e. there is some $r\in R\cap J_1$ which does not lie in $J_2$. Then $r\in T\cap J_1\backslash J_2$, so $T$ is also ideal-preserving.

        Finally, assume that $R$ is a locally associated order. By Theorem \ref{associated order}, we know that $\abs{\Cl(\Rbar)}=\abs{\Cl(R)}$. By Lemma \ref{surjective class map}, $\abs{\Cl(R)}\geq\abs{\Cl(T)}$; by Lemma \ref{exact sequence}, $\abs{\Cl(T)}\geq\abs{\Cl(\Rbar)}$. Then $\abs{\Cl(T)}\geq\abs{\Cl(\Rbar)}=\abs{\Cl(R)}\geq\abs{\Cl(T)}$, so $\abs{\Cl(T)}=\abs{\Cl(\Rbar)}$. Then by Condition 5 in Theorem \ref{locally associated orders}, $T$ is a locally associated order.
    \end{proof}
    The following results demonstrate some of the utility of these properties. Namely, they show how we might determine some of the structure of a particular type of intermediate order.
    \begin{theorem}
        \label{int orders}
        Let $R$ be an ideal-preserving order in a number field $K$ with conductor ideal $I$. Then for any $\Rbar$-ideal $J$, $R+J$ is an order in $K$ with conductor ideal $I+J$. In particular, if $J|I$, then $R+J$ has conductor ideal $J$.
    \end{theorem}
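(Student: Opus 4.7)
My plan is to separate routine verifications from the main content and then use the Chinese Remainder Theorem (enabled by ideal-preservation) to reduce the conductor computation to a single-prime local claim. First, $R+J$ is a subring (closure under multiplication uses $rj \in J$ for $r \in R \subseteq \Rbar$ and $j \in J$, since $J$ is an $\Rbar$-ideal) and, being sandwiched between the two orders $R \subseteq R+J \subseteq \Rbar$, is itself an order in $K$. The easy containment $I+J \subseteq (R+J:\Rbar)$ follows because $I+J$ is an $\Rbar$-ideal lying in $R+J$ (as $I \subseteq R$).

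For the reverse inclusion I would first reduce to the case $J = \prod_i P_i^{b_i}$ with each $P_i \mid I$: writing $J = J_1 J_2$, where $J_1$ collects the primes of $J$ dividing $I$ and $J_2$ is coprime to $I$, one picks $j_2 \in J_2$ and $i \in I$ with $j_2+i=1$; every $\alpha \in J_1$ then decomposes as $\alpha = \alpha j_2 + \alpha i \in J + I \subseteq R+J$, so $J_1 \subseteq R+J$ and hence $R+J = R+J_1$ (with a symmetric computation yielding $I+J = I+J_1$). Setting $c_i := \min(a_i, b_i)$ so that $I+J = \prod_i P_i^{c_i}$, I would next apply CRT: $\Rbar/(I+J) \cong \prod_i \Rbar/P_i^{c_i}$, and the crucial consequence of ideal-preservation (Theorem~\ref{ideal-preserving order}, condition 3) is that the image of $R$ in $\Rbar/(I+J)$ is exactly the product $\prod_i (R+P_i^{c_i})/P_i^{c_i}$. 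Since $J$ maps to zero, so does the image of $R+J$, and the conductor equality $L := (R+J:\Rbar) = I+J$ therefore splits: it is equivalent to saying that, for each $i$, the largest $\Rbar/P_i^{c_i}$-ideal contained in $(R+P_i^{c_i})/P_i^{c_i}$ is trivial. This is a local statement at $P = P_i$ asserting $(R_P + \pi^c\Rbar_P : \Rbar_P) = \pi^c\Rbar_P$, with $c = c_i \leq a = a_i$ and $\pi$ a uniformizer of the DVR $\Rbar_P$.

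For this single-prime identity I would exploit the associated graded structure. Set $\Lambda_j := (R_P \cap \pi^j\Rbar_P)/(R_P \cap \pi^{j+1}\Rbar_P)$, a $k_R$-subspace of $\pi^j\Rbar_P/\pi^{j+1}\Rbar_P \cong k$, where $k, k_R$ are the residue fields of $\Rbar_P$ and $R_P$. Multiplication in $R_P$ gives $\Lambda_m \Lambda_n \subseteq \Lambda_{m+n}$; the local form of ideal-preservation ($R_P \cap \pi\Rbar_P \not\subseteq \pi^2\Rbar_P$) supplies a nonzero $\bar u \in \Lambda_1$, which is a unit in $k$, and so $\Lambda_j = k$ forces $\Lambda_{j+1} \supseteq k\bar u = k$. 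Iterating downward from a sufficiently large $j$ where $\pi^j\Rbar_P \subseteq R_P$ automatically, one obtains $\Lambda_j = k$ if and only if $j \geq a$. A direct computation of $(R_P + \pi^c\Rbar_P) \cap \pi^j\Rbar_P = (R_P \cap \pi^j\Rbar_P) + \pi^c\Rbar_P$ for $j < c$ then shows the corresponding levels of $R_P + \pi^c\Rbar_P$ equal $\Lambda_j$ for $j < c$ and $k$ for $j \geq c$, so the conductor exponent is $\min(a, c) = c$, as required. Reassembling yields $L = I+J$, and the final clause ($J \mid I$ forces $I+J = J$) is immediate.

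The chief obstacle is the single-prime level analysis, and in particular the claim $\Lambda_j \neq k$ for $j < a$ --- this is the only step genuinely using ideal-preservation, via the ``all-or-nothing'' behavior forced by the graded multiplication. Without this hypothesis the identity fails: for example, $R = \bZ[2\sqrt{2}]$ is not ideal-preserving, and the intermediate order $R + (\sqrt{2})$ equals $\Rbar$, whereas $I + (\sqrt{2}) = (\sqrt{2}) \subsetneq \Rbar$.
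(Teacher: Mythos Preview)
Your argument is correct and takes a genuinely different route from the paper's. The paper argues by contradiction: assuming the conductor $A$ of $R+J$ strictly contains $J$ (after reducing to $J\mid I$), it uses prime avoidance and the elements $r_i\in R\cap P_i\setminus\bigl(P_i^2\cup\bigcup_{j\neq i}P_j\bigr)$ supplied by ideal-preservation to build $\beta=\prod r_i^{a_i-b_i}\in R$ with $\beta\Rbar+I=IJ^{-1}$; picking $\alpha\in R\cap A\setminus J$ then forces $\alpha\beta\in I$ while a valuation count at some $P_i$ shows $P_i^{a_i}\nmid \alpha\beta$, a contradiction. This is short, global, and entirely elementary.

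Your approach instead decomposes via CRT (legitimate here because the idempotents $\alpha_i$ constructed in the proof of Theorem~\ref{ideal-preserving order} for $R/I$ also serve modulo the coarser ideal $I+J$) and then runs a graded filtration argument at a single prime. The crux---that $\Lambda_1\neq 0$ forces the ``$\Lambda_j=k$'' locus to be upward-closed, whence $\Lambda_j=k$ exactly for $j\ge a$---isolates very cleanly the one place where the hypothesis $R\cap P\not\subseteq P^2$ is used, and the modular-law computation $(R_P+\pi^c\Rbar_P)\cap\pi^j\Rbar_P=(R_P\cap\pi^j\Rbar_P)+\pi^c\Rbar_P$ then reads off the conductor exponent immediately. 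What your route buys is a transparent local picture (and your $\bZ[2\sqrt{2}]$ counterexample drops out naturally), at the cost of invoking localization and the fact that conductors of module-finite extensions commute with it; the paper's route buys brevity and avoids any local machinery. Both proofs ultimately hinge on the same combinatorial content of ideal-preservation, expressed globally in one case and gradedly in the other.
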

    \begin{proof}
        First, note that $I+J$ is an $\Rbar$-ideal which divides $J$, and $R+J=R+(I+J)$. Then it will suffice to show that for any $\Rbar$-ideal $J$ which divides $I$, the ring $R+J$ has conductor ideal $(R+J:T)=J$.

        Now assume that $J$ is an $\Rbar$-ideal dividing $I$ and consider the order $R+J$ in $K$. First, note that $J\subseteq R+J$ trivially. Then letting $A$ be the conductor ideal of $R+J$, we have that $J\subseteq A$. If $A=J$, then we are done. Otherwise, assume that $J\subsetneq A$. Then $R+J\subseteq R+A\subseteq R+(R+J)=R+J$, so $R+J=R+A$.

        Let $I=P_1^{a_1}\dots P_k^{a_k}$ be the factorization of $I$ into prime $\Rbar$-ideals. Then since $J|I$, $J=P_1^{b_1}\dots P_k^{b_k}$ for some $0\leq b_i\leq a_i$, $1\leq i\leq k$. Since $R$ is an ideal-preserving order, we have that for each $1\leq i\leq k$, $R\cap P_i\not\subseteq R\cap P_i^2$, and for $i\neq j$, $R\cap P_i\nsubseteq R\cap P_j$. Note that each $R\cap P_i$ remains prime in $R$. Then the Prime Avoidance Lemma tells us that for each $1\leq i\leq k$, there exists some $r_i\in R\cap P_i\backslash \left(P_i^2\cup\bigcup_{j\neq i}P_j\right)$. Then letting $\beta=\prod_{i=1}^kr_i^{a_i-b_i}$, we have that $\beta\in R$ and $\beta \Rbar+I=P_1^{a_1-b_1}\dots P_k^{a_k-b_k}=IJ^{-1}$.

        Now note that $\beta(R+A)=\beta(R+J)=\beta R+\beta J$. Since $\beta\in R$, $\beta R\subseteq R$; since $\beta\in IJ^{-1}$, $\beta J\subseteq I$. Then $\beta(R+A)=\beta R+\beta J\subseteq R+I=R$. Then $\beta$ conducts any element of $R+A$ into $R$, i.e. $\beta\in (R:R+A)$. Now since $A\nsubseteq J$ and $R$ is an ideal-preserving order, $R\cap A\nsubseteq J$. Then let $\alpha\in R\cap A\backslash J$. Since $(\alpha)\nsubseteq J$, there must be some $1\leq i\leq k$ such that $P_i^{b_i}\nmid (\alpha)$. Now consider the element $\alpha\beta$. Note that for any $t\in\Rbar$, $\alpha t\in A\subseteq R+A$, so $\alpha\beta t=\beta(\alpha t)\in R$. Then $\alpha\beta\in (R:\Rbar)=I$, i.e. $I|(\alpha\beta)$. However, note that as there is some $1\leq i\leq k$ such that $P_i^{b_i}\nmid (\alpha)$ and the exact power of $P_i$ dividing $(\beta)$ is $P_i^{a_i-b_i}$, $P_i^{a_i}\nmid(\alpha\beta)$, a contradiction. Then the conductor ideal $(R+J:T)$ must be exactly $J$.
    \end{proof}

    \begin{theorem}
        \label{intersect orders}
        Let $R$ be an ideal-preserving order in a number field $K$ with conductor ideal $I$, and let $J_1$ and $J_2$ be $\Rbar$-ideals dividing $I$. Then $(R+J_1)\cap (R+J_2)=R+J_1\cap J_2$.
    \end{theorem}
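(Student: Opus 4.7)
The inclusion $R + (J_1 \cap J_2) \subseteq (R+J_1) \cap (R+J_2)$ is immediate since $J_1 \cap J_2 \subseteq J_1$ and $J_1 \cap J_2 \subseteq J_2$. For the reverse inclusion, my plan is to take $x \in (R+J_1) \cap (R+J_2)$, write $x = r_1 + j_1 = r_2 + j_2$ with $r_i \in R$ and $j_i \in J_i$, and construct an $r \in R$ satisfying $r - r_1 \in J_1$ and $r - r_2 \in J_2$. Once such an $r$ is produced, $x - r = (r_1 - r) + j_1 \in J_1$ and $x - r = (r_2 - r) + j_2 \in J_2$, so $x - r \in J_1 \cap J_2$ and $x = r + (x - r) \in R + (J_1 \cap J_2)$.

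The construction of $r$ will proceed by a CRT-style argument. Write $I = P_1^{a_1} \cdots P_k^{a_k}$ and correspondingly $J_1 = \prod_i P_i^{b_i}$, $J_2 = \prod_i P_i^{c_i}$ with $0 \leq b_i, c_i \leq a_i$; note that $r_1 - r_2 = j_2 - j_1$ lies in $J_1 + J_2 = \prod_i P_i^{\min(b_i, c_i)}$. Partitioning the indices into $S = \{i : b_i \leq c_i\}$ and $T = \{i : c_i < b_i\}$, I invoke the isomorphism $R/I \cong \prod_i R/(R \cap P_i^{a_i})$ from Condition 3 of Theorem \ref{ideal-preserving order} to produce $r \in R$ satisfying $r \equiv r_2 \modulo{P_i^{a_i}}$ for $i \in S$ and $r \equiv r_1 \modulo{P_i^{a_i}}$ for $i \in T$.

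To verify $r - r_1 \in J_1$, I check each prime separately. For $i \in T$, $r \equiv r_1 \modulo{P_i^{a_i}}$ yields $r - r_1 \in P_i^{a_i} \subseteq P_i^{b_i}$. For $i \in S$, write $r - r_1 = (r - r_2) + (r_2 - r_1)$; then $r - r_2 \in P_i^{a_i} \subseteq P_i^{b_i}$ by construction, while $r_2 - r_1 \in P_i^{\min(b_i, c_i)} = P_i^{b_i}$ since $b_i \leq c_i$, hence $r - r_1 \in P_i^{b_i}$. Intersecting over coprime prime powers then gives $r - r_1 \in \bigcap_i P_i^{b_i} = J_1$, and a symmetric argument yields $r - r_2 \in J_2$. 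The main step requiring care is arranging the partition so that the compatibility condition $r_1 - r_2 \in \prod_i P_i^{\min(b_i, c_i)}$ feeds correctly into the cross-congruence on each side; with the right choice, the CRT iso from ideal-preservation does the rest.
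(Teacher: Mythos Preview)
Your proof is correct and rests on the same essential ingredient as the paper---the CRT decomposition $R/I \cong \prod_i R/(R\cap P_i^{a_i})$ from Condition~3 of Theorem~\ref{ideal-preserving order}---but your organization is more direct. The paper first reduces to the case $J_1\cap J_2 = I$ (by replacing $R$ with the ideal-preserving order $R + J_1\cap J_2$, invoking Theorem~\ref{int orders}), then treats separately the subcase where $J_1$ and $J_2$ are coprime, and finally handles the general $J_1\cap J_2=I$ case by observing $(R+P_i^{b_i})\cap(R+P_i^{c_i})=R+P_i^{a_i}$ and applying the coprime case to $\bigcap_i(R+P_i^{a_i})$. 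You bypass this layering: by partitioning the primes according to whether $b_i \le c_i$ and exploiting the compatibility $r_1 - r_2 \in \prod_i P_i^{\min(b_i,c_i)}$, you manufacture the required $r$ in a single CRT step and verify $r-r_1\in J_1$, $r-r_2\in J_2$ prime by prime. Your route is shorter and avoids the auxiliary reduction; the paper's route has the minor structural advantage of isolating the coprime case as a reusable lemma. One small point worth making explicit in your write-up: the isomorphism of Theorem~\ref{ideal-preserving order} gives congruences modulo $R\cap P_i^{a_i}$, and you are using that $R\cap P_i^{a_i}\subseteq P_i^{a_i}$ to pass to congruences modulo $P_i^{a_i}$; since $r_1,r_2\in R$, the targets are legitimate classes and this passage is valid.
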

    \begin{proof}
        For ease of notation, define $R_1:=R+J_1$ and $R_2:=R+J_2$. For $i\in\{1,2\}$, the inclusion $R+J_1\cap J_2\subseteq R_i$ is trivial; then $R+J_1\cap J_2\subseteq R_1\cap R_2$. We need to show the reverse inclusion. To do so, we will first note that $R+J_1\cap J_2$ is an ideal-preserving order with conductor ideal $J_1\cap J_2$, and $R_i=R+J_i=(R+J_1\cap J_2)+J_1$ for $i\in\{1,2\}$. Then it will suffice to show that the result holds when $J_1\cap J_2=I$.

        Assume that $I=J_1\cap J_2$; for now, also assume that $J_1$ and $J_2$ are relatively prime. We need to show that $R_1\cap R_2\subseteq R$. Applying the isomorphism in Condition 3 of Theorem \ref{ideal-preserving order} first to $R$, then to $R_1$ and $R_2$, we get that $\quot{R}{I}\cong\quot{R_1}{J_1}\times\quot{R_2}{J_2}$. Then letting $t\in R_1\cap R_2$ and $\tau:\quot{R}{I}\to \quot{R_1}{J_1}\times \quot{R_2}{J_2}$ the inclusion isomorphism between these rings, there must be some $r\in I$ such that $\tau(r+I)=(r+J_1,r+J_2)=(t+J_1,t+J_2)$. Then $r-t\in J_1\cap J_2=I$, so $t=r+\beta\in R$ for some $\beta\in I$. Then $R_1\cap R_2\subseteq R.$

        All that remains to show is the case when $J_1\cap J_2=I$ but $J_1$ and $J_2$ are not necessarily relatively prime. Write $I=P_1^{a_1}\dots P_k^{a_k}$, the factorization of $I$ into prime $\Rbar$-ideals. Then since $J_1\cap J_2=I$, we can write $J_1=P_1^{b_1}\dots P_k^{b_k}$ and $J_2=P_1^{c_1}\dots P_k^{c_k}$, with $b_i,c_i\in\bN_0$ and $a_i=\max\{b_i,c_i\}$ for each $1\leq i\leq k$. Now let $t\in R_1\cap R_2$. Since $R_1\subseteq R+P_i^{b_i}$ and $R_2\subseteq R+P_i^{c_i}$ for each $1\leq i\leq k$, we have that $t\in (R+P_i^{b_i})\cap (R+P_i^{c_i})=R+P_i^{a_i}$ for each $1\leq i\leq k$. Then by the relatively prime case above, $t\in \bigcap_{i=1}^k(R+P_i^{a_i})=R$. Then $R_1\cap R_2\subseteq R$, completing the proof.
    \end{proof}

    One might notice that we have not provided an example of an order which is both ideal-preserving and locally associated but is not associated. Whether such an order exists, i.e. whether the converse of Theorem \ref{ao implies ipo and lao} holds, is currently unknown. However, this converse will hold in some cases, including one which is of particular interest to the results of this paper.

    \begin{theorem}
        \label{radical conductor converse}
        Let $R$ be an order in a number field $K$ with radical conductor ideal $I$. Then $R$ is associated if and only if $R$ is both ideal-preserving and locally associated.
    \end{theorem}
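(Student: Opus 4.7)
The forward direction is exactly Theorem \ref{ao implies ipo and lao}, so only the converse requires attention. Assume $R$ is both ideal-preserving and locally associated and that $I$ is radical, so that $I = P_1 \cdots P_k$ for distinct prime $\Rbar$-ideals $P_1, \ldots, P_k$. The goal is to show that for every $t \in \Rbar$ there is a unit $u \in U(\Rbar)$ with $tu \in R$.

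The plan is to work modulo $I$, where two product decompositions align nicely. The Chinese Remainder Theorem gives $\Rbar/I \cong \prod_{i=1}^k \Rbar/P_i$, a product of finite fields, while Condition 3 of Theorem \ref{ideal-preserving order} supplies the compatible decomposition $R/I \cong \prod_{i=1}^k R/(R \cap P_i)$, each factor embedding as a subfield of $\Rbar/P_i$. Given $t \in \Rbar$ with image $(\bar{t}_1, \ldots, \bar{t}_k)$ in the product, I define $\bar{v} \in U(\Rbar/I)$ whose $i$-th coordinate is $\bar{t}_i^{-1}$ when $\bar{t}_i \neq 0$ and $1$ when $\bar{t}_i = 0$. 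Applying Condition 2 of Theorem \ref{locally associated orders} to $\bar{v}$ produces $\bar{s} \in U(R/I)$ and $\beta \in I$ with $u := vs + \beta \in U(\Rbar)$ (where $v$ and $s$ are any lifts), so that $\bar{u} = \bar{v}\bar{s}$ in $\Rbar/I$.

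Computing $\overline{tu}$ coordinate by coordinate yields $\bar{s}_i$ when $\bar{t}_i \neq 0$ and $0$ when $\bar{t}_i = 0$; in either case the $i$-th coordinate lies in $R/(R \cap P_i)$. By the ideal-preserving decomposition, $\overline{tu}$ itself lies in the subring $R/I$ of $\Rbar/I$, and since $I \subseteq R$ this forces $tu \in R$, as desired. The delicate point is the last step: having every coordinate in $R/(R \cap P_i)$ only implies $\overline{tu} \in R/I$ because the ideal-preserving hypothesis guarantees $R/I$ equals the full product, so both hypotheses are genuinely used. Radicality of $I$ is what reduces each $\Rbar/P_i$ to a field, making the inverses $\bar{t}_i^{-1}$ available and the coordinatewise construction of $\bar{v}$ possible.
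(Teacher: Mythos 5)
Your converse argument is correct, and it takes a genuinely different route from the paper's. The paper perturbs $t$ \emph{additively}: writing $I=J_1J_2$ with $J_1=t\Rbar+I$, it uses prime avoidance to pick $s\in J_2$ so that $t+s$ is coprime to $I$, applies Condition 3 of Theorem \ref{locally associated orders} to $t+s$, and then invokes the intermediate-order machinery (Theorem \ref{intersect orders}, i.e.\ $(R+J_1)\cap(R+J_2)=R$) to land $tu$ back in $R$. You instead correct $t$ \emph{multiplicatively} inside the finite quotient $\Rbar/I$: radicality makes each CRT factor $\Rbar/P_i$ a field, so you can build a unit $\bar v$ cancelling $t$ coordinatewise where it is nonzero, use Condition 2 of Theorem \ref{locally associated orders} to trade $\bar v\bar s$ for a genuine unit $u\in U(\Rbar)$, and then use the surjectivity of the natural map $R/I\to\prod_i (R+P_i)/P_i$ — which is the actual content of the implication $1\Rightarrow 3$ in Theorem \ref{ideal-preserving order} — to conclude $\overline{tu}\in R/I$ and hence $tu\in R$. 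Your version dispenses with prime avoidance and with Theorems \ref{int orders} and \ref{intersect orders} entirely, which makes it shorter and more self-contained; the paper's two-piece splitting $I=J_1J_2$ is the form that gets reused in the later power series arguments, which is presumably why it is set up that way. The one point requiring care, which you correctly flag, is that Condition 3 of Theorem \ref{ideal-preserving order} must be read as saying the \emph{CRT-induced} map $R/I\to\prod_i(R+P_i)/P_i$ is an isomorphism (not merely that some abstract isomorphism exists); this is what the paper's proof of that theorem establishes, and it is what lets you descend from coordinatewise membership to membership in $R/I$.
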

    \begin{proof}
        First, note that Theorem \ref{ao implies ipo and lao} gives us the forward direction of this proof, i.e. that if $R$ is associated, then it is both ideal-preserving and locally associated. We need to show the reverse implication. Assume that $R$ is an order which is both ideal-preserving and locally associated. Let $t\in\Rbar$ and define $J_1:=t\Rbar+I$ and $J_2=IJ_1^{-1}$. Note that since $I$ is assumed to be a radical ideal, $J_1$ and $J_2$ must be relatively prime. By prime avoidance, we can then select some $s\in J_2$ which is relatively prime to $J_1$ (i.e. is not contained in any of the prime ideals dividing $J_1$). Then note that $t+s$ cannot be contained in any prime ideals containing $I$, i.e. $t+s$ is relatively prime to $I$. Since $R$ is locally associated, Condition 3 from Theorem \ref{locally associated orders} tells us that there must exist some $u\in U(\Rbar)$ such that $(t+s)u=r\in R$. Then $tu=r-su^{-1}\in R+J_2$. Since $R$ is ideal-preserving and $t\in J_1$, Theorem \ref{intersect orders} now tells us that $tu\in J_1\cap (R+J_2)\subseteq (R+J_1)\cap (R+J_2)=R+I=R$. Then $R$ is an associated order.
    \end{proof}

    \section{Results for $R$}
    With these tools at hand, we are now ready to consider elasticity in an order in a number field. In particular, we will explore circumstances under which the elasticity of an order might be equal to the elasticity of its integral closure (the ring of algebraic integers). This will allow us to take advantage of what is already known about the elasticity of a number ring. 

    First, we will show that one direction of inequality holds in general. To do so, we need the following lemmas.

    \begin{lemma}
        \label{relatively prime ideal factors}
        \cite{conrad}
        Let $R$ be an order in a number field with conductor ideal $I$. Any $R$-ideal relatively prime to $I$ has unique factorization into prime $R$-ideals relatively prime to $I$. Moreover, all but finitely many prime ideals in $R$ are relatively prime to $I$.
    \end{lemma}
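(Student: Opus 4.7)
The plan is to handle the two assertions separately, in each case reducing to the better-behaved Dedekind domain $\Rbar$.

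For the finiteness assertion, I would use that $R$ is a one-dimensional Noetherian domain (being an order in a number field) and that $I$ is a nonzero $R$-ideal, so $R/I$ is a finite ring with only finitely many primes. Because every nonzero prime of $R$ is maximal, a prime $P$ is relatively prime to $I$ precisely when $I\not\subseteq P$; hence only finitely many primes of $R$ fail to be relatively prime to $I$.

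For the factorization assertion, the key structural input is the bijection between primes of $R$ not containing $I$ and primes of $\Rbar$ not containing $I$, given by $P\mapsto P\Rbar$ with inverse $\mathfrak{P}\mapsto \mathfrak{P}\cap R$. This rests on the observation that if $P\not\supseteq I$, there exists $c\in I\setminus P$ with $c\Rbar\subseteq R$, from which one deduces $R_P = \Rbar_{P\Rbar}$; in particular the primes of $R$ and $\Rbar$ away from $I$ match. Now given an $R$-ideal $J$ with $J+I = R$, Lemma \ref{rel prime is invertible} places $J\in\Inv(R)$, and its extension $J\Rbar$ factors uniquely in the Dedekind domain $\Rbar$ as $J\Rbar = \mathfrak{P}_1^{a_1}\cdots \mathfrak{P}_k^{a_k}$. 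None of the $\mathfrak{P}_i$ can contain $I$, for otherwise $J\subseteq \mathfrak{P}_i\cap R$ would contradict $J+I = R$.

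Setting $P_i = \mathfrak{P}_i\cap R$, I would then claim $J = P_1^{a_1}\cdots P_k^{a_k}$ and verify this by localizing at every prime of $R$: away from $I$ this reduces to the factorization of $J\Rbar$ under the identification $R_P = \Rbar_{P\Rbar}$, while at primes containing $I$ both sides become the unit ideal because $J$ and each $P_i$ is relatively prime to $I$. Uniqueness follows by extending any two such factorizations to $\Rbar$ and invoking unique factorization there together with the bijection of primes. The main obstacle I expect is the local identification $R_P = \Rbar_{P\Rbar}$ for $P$ not containing the conductor: every subsequent step is a formal consequence of this, so the weight of the argument sits in that single observation.
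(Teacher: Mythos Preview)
Your outline is correct and is precisely the standard argument: the paper does not prove this lemma at all but simply imports it from \cite{conrad}, and what you have sketched---the finiteness via $R/I$ being Artinian, the local identification $R_P=\Rbar_{P\Rbar}$ for $P\not\supseteq I$ using a conductor element $c\in I\setminus P$, and the transfer of unique factorization from $\Rbar$ via the resulting bijection on primes away from $I$---is exactly the proof given in that reference. There is nothing to compare against in the paper itself.
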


    \begin{lemma}
        \label{classes have primes}
        \cite{picavet}
        Let $R$ be an order in a number field. Every ideal class in $\Cl(R)$ contains infinitely many prime ideals.
    \end{lemma}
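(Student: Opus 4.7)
The plan is to reduce the statement to the Chebotarev density theorem applied to a suitable abelian extension of $K$, namely the ring class field of the order $R$. First, by Lemma \ref{relatively prime ideal factors}, all but finitely many primes of $R$ are relatively prime to the conductor $I$; such primes are in canonical bijection with primes of $\Rbar$ relatively prime to $I$ via $p \mapsto p\Rbar$, with contraction $P \mapsto P \cap R$ as inverse. Thus it suffices to show that, for each class $c \in \Cl(R)$, infinitely many primes $P \subset \Rbar$ relatively prime to $I$ have $[P \cap R] = c$ in $\Cl(R)$.

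Next, I would appeal to class field theory: the order $R$ possesses a \emph{ring class field} $H_R$, an abelian extension of $K$ (unramified outside $I$ and the archimedean places) with $\Gal(H_R/K) \cong \Cl(R)$. This extension is characterized by the Artin reciprocity law, which asserts that for any prime $P$ of $\Rbar$ unramified in $H_R$, the Frobenius automorphism $\mathrm{Frob}_P \in \Gal(H_R/K)$ corresponds under the isomorphism $\Gal(H_R/K) \cong \Cl(R)$ to the ideal class $[P \cap R]$. Applying the Chebotarev density theorem to $H_R/K$ then shows that each element of $\Gal(H_R/K)$ (equivalently each class in $\Cl(R)$) is realized as the Frobenius of a set of primes of Dirichlet density $1/\abs{\Cl(R)}$, so in particular every class contains infinitely many prime ideals.

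The main obstacle is the construction and identification of the ring class field $H_R$ together with the Artin correspondence $[P \cap R] \leftrightarrow \mathrm{Frob}_P$; this is a nontrivial piece of class field theory. A more elementary alternative is to bypass $H_R$ and instead construct a surjection from the classical ray class group $\Cl_I(\Rbar)$ of $\Rbar$ with modulus $I$ onto $\Cl(R)$, sending the class of an invertible $\Rbar$-ideal $J$ prime to $I$ to $[J \cap R]$. Well-definedness of this map (i.e.\ that a principal ideal $(\alpha)\Rbar$ with $\alpha \equiv 1 \pmod{I}$ contracts to a principal $R$-ideal) can be extracted from the exact sequence of Lemma \ref{exact sequence} by adjusting $\alpha$ by a global unit of $\Rbar$ so that it lies in $R$. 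Once the surjection is in hand, Chebotarev applied to the ray class field of $\Rbar$ modulo $I$ gives the same conclusion, using only density results for Dedekind domains.
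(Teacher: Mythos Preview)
The paper does not prove this lemma at all; it is simply quoted from \cite{picavet} and used as a black box. So there is no ``paper's proof'' to compare against.

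That said, your argument is the standard one and is correct. A minor simplification in your second (ray-class-group) approach: the well-definedness step is easier than you indicate. If $\alpha \equiv 1 \pmod{I}$ then already $\alpha \in 1 + I \subseteq R$, so no unit adjustment is needed. Moreover $(\alpha)\Rbar \cap R = \alpha R$ directly: if $r = \alpha t$ with $t \in \Rbar$ and $r \in R$, then $t = r - (\alpha - 1)t \in R + I\Rbar = R$. Hence the map $\Cl_I(\Rbar) \to \Cl(R)$ sending $[J]$ to $[J \cap R]$ is well-defined (and surjective by Lemma~\ref{rel prime in ideal class}), and Chebotarev for the ray class field of modulus $I$ finishes the argument exactly as you say. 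The exact sequence of Lemma~\ref{exact sequence} is not actually needed here.
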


    \begin{lemma}
        \label{davenport grows}
        Let $R\subseteq T$ be two orders in the same number field $K$. Then $D(\Cl(R))\geq D(\Cl(T))$, with equality if and only if $\Cl(R)\cong \Cl(T)$.
    \end{lemma}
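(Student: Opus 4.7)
The plan is to deduce the inequality from the surjective class group map furnished by Lemma~\ref{surjective class map} combined with a standard quotient inequality for the Davenport constant. Let $\phi\colon \Cl(R)\to\Cl(T)$ be the surjective homomorphism of Lemma~\ref{surjective class map}, and set $N:=\ker\phi$, so that $\Cl(T)\cong \Cl(R)/N$. I want to reduce everything to the combinatorial fact that if $G$ is a finite abelian group and $N\leq G$ is a subgroup, then
\[
D(G)\ \geq\ D(G/N)+D(N)-1.
\]
Granting this, taking $G=\Cl(R)$ gives $D(\Cl(R))\geq D(\Cl(T))+D(N)-1\geq D(\Cl(T))$, since every nonempty finite abelian group has Davenport constant at least $1$.

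For the quotient inequality itself, the strategy is to concatenate two maximal zero-sum-free sequences and transfer between $G$ and $G/N$ via $\phi$. First take a zero-sum-free sequence $\bar s_1,\ldots,\bar s_{D(G/N)-1}$ in $G/N$ realizing the Davenport bound, lift each term to some $s_i\in G$, and then append a zero-sum-free sequence $t_1,\ldots,t_{D(N)-1}$ chosen inside $N\subseteq G$. I claim the resulting sequence of length $D(G/N)+D(N)-2$ in $G$ is zero-sum-free: any putative nonempty zero-sum subsequence projects into $G/N$ to a zero-sum built from a subset of $\{\bar s_i\}$, which by the chosen zero-sum-freeness in $G/N$ must be empty, so the subsequence lies entirely in the $t_j$'s and is therefore a zero-sum in $N$, contradicting the choice of the $t_j$. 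This establishes $D(G)\geq D(G/N)+D(N)-1$.

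For the equality clause, one direction is trivial: if $\Cl(R)\cong\Cl(T)$ as abstract groups, then the Davenport constants match. For the converse, suppose $D(\Cl(R))=D(\Cl(T))$. The inequality above then forces $D(N)\leq 1$. Since any nontrivial finite abelian group contains a nonzero element $g$, the length-$1$ sequence $(g)$ witnesses $D(\cdot)\geq 2$ for nontrivial groups; hence $D(N)=1$ implies $N=0$, so $\phi$ is an isomorphism and $\Cl(R)\cong\Cl(T)$.

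The only real content here is the Davenport-quotient inequality, which is standard in the factorization literature (for instance in Geroldinger–Halter-Koch); the main obstacle is simply packaging it cleanly with Lemma~\ref{surjective class map}, after which the equivalence follows from the elementary observation that $D(\{0\})=1$ while $D(G)\geq 2$ for any nontrivial $G$.
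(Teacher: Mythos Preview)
Your proof is correct, but it takes a genuinely different route from the paper. Both arguments start from the surjection $\phi\colon\Cl(R)\to\Cl(T)$ of Lemma~\ref{surjective class map}, but the paper proceeds by a direct lifting argument: it takes a minimal zero-sum sequence $g_1,\dots,g_k$ in $\Cl(T)$ of length $k=D(\Cl(T))$, lifts it termwise to $h_1,\dots,h_k\in\Cl(R)$, and observes that either this lift is already a minimal zero-sum sequence or one can append $-(h_1+\cdots+h_k)$ to obtain one of length $k+1$. For the strict inequality when $\phi$ is not injective, the paper swaps one lift $h_k$ for a different preimage $h_k'$ to force the second alternative. You instead invoke the standard subgroup inequality $D(G)\ge D(G/N)+D(N)-1$ (proved by concatenating maximal zero-sum-free sequences), and then read off both the inequality and the equality clause from $D(N)=1\iff N=0$. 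Your approach is cleaner and connects the result to a known lemma in the factorization literature; the paper's approach is more self-contained and avoids introducing the kernel and the auxiliary inequality, at the cost of a slightly ad~hoc case split for the equality statement.
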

    \begin{proof}
        Recall from Lemma \ref{surjective class map} that there exists a surjective homomorphism $\tau$ from $\Cl(R)$ onto $\Cl(T)$. Then let $k=D(\Cl(T))$ and $\{g_1,\dots,g_k\}\subseteq \Cl(T)$ be a 0-sequence with no proper 0-subsequence. Since $\tau$ is surjective, there must exist $h_i\in\Cl(R)$ such that $\tau(h_i)=g_i$ for each $1\leq i\leq k$. Since $\{g_1,\dots,g_k\}$ does not have any proper 0-subsequence, neither can the sequence $\{h_1,\dots,h_k\}\subseteq \Cl(R)$. If $\{h_1,\dots,h_k\}$ is a 0-sequence, then $\Cl(R)$ has a 0-sequence of length $k$ with no proper 0-subsequence. If not, then note that (using additive notation), $\{h_1,\dots,h_k,-(h_1+\dots+h_k)\}$ is a 0-sequence of length $k+1$ in $\Cl(R)$. Moreover, it is clear that this 0-sequence cannot have any proper 0-subsequence, as this would force $\{h_1,\dots,h_k\}$ to have a 0-subsequence. Then in this case, $\Cl(R)$ has a 0-sequence of length $k+1$ with no proper 0-subsequence. Since $D(\Cl(R))$ gives the length of the longest 0-sequence in $\Cl(R)$ with no proper 0-subsequence, $D(\Cl(R))\geq k=D(\Cl(T))$.

        Now note that if $\Cl(R)\cong \Cl(T)$, then $D(\Cl(R))=D(\Cl(\Rbar))$ trivially. For the converse, assume that $\Cl(R)$ and $\Cl(T)$ are not isomorphic; that is, that the surjective homomorphism $\tau$ is non-injective. Now construct the sequences $\{g_1,\dots,g_k\}$ and $\{h_1,\dots,h_k\}$ as above. If $\{h_1,\dots,h_k\}$ is not a 0-sequence, we have already shown that we can construct a 0-sequence in $\Cl(R)$ of length $k+1$ with no proper 0-subsequence, and thus $D(\Cl(R))>D(\Cl(T))$. On the other hand, if $\{h_1,\dots,h_k\}$ is a 0-sequence, then let $h_k'\neq h_k$ be another element of $\Cl(R)$ such that $\tau(h_k')=g_k$ (such an element must exist since $\tau$ is non-injective). Then $\{h_1,\dots,h_{k-1},h_k'\}$ is not a 0-sequence, and we again can construct a 0-sequence in $\Cl(R)$ of length $k+1$ with no proper 0-sequence. Thus, if $\Cl(R)$ are non-isomorphic, then $D(\Cl(R))>D(\Cl(T))$.
    \end{proof}

    \begin{theorem}
        \label{elasticity grows}
        Let $R$ be an order in a number field $K$. Then $\rho(R)\geq \rho(\Rbar)$.
    \end{theorem}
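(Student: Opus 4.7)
The plan is to combine Lemma \ref{davenport grows} with the classical identity $\rho(\Rbar)=D(\Cl(\Rbar))/2$ for rings of integers by exhibiting an element of $R$ whose shortest and longest factorizations differ by a factor of $D(\Cl(R))/2$. First I would dispose of the trivial case: if $\Cl(\Rbar)$ is trivial, then $\Rbar$ is a UFD with $\rho(\Rbar)=1$, and since $R$ is not a field, $\rho(R)\geq 1$ automatically. So assume $\Cl(\Rbar)$ is nontrivial, and set $k:=D(\Cl(R))\geq D(\Cl(\Rbar))\geq 2$ by Lemma \ref{davenport grows}.

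By Lemma \ref{classes have primes} applied to $R$, together with the fact that only finitely many primes of $R$ contain the conductor $I$, I can pick prime ideals $\mathfrak{p}_1,\dots,\mathfrak{p}_k$ of $R$ relatively prime to $I$ whose classes form a minimal zero-sum sequence of length $k$ in $\Cl(R)$, and distinct primes $\mathfrak{q}_1,\dots,\mathfrak{q}_k$ of $R$ relatively prime to $I$ with $[\mathfrak{q}_i]=-[\mathfrak{p}_i]$. By Lemma \ref{rel prime is invertible}, the products $\mathfrak{p}_1\cdots\mathfrak{p}_k$, $\mathfrak{q}_1\cdots\mathfrak{q}_k$, and each $\mathfrak{p}_i\mathfrak{q}_i$ are invertible $R$-ideals with trivial class, hence principal; let $\alpha,\beta,\gamma_i\in R$ be respective generators.

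The heart of the argument is the claim that $\alpha,\beta,\gamma_1,\dots,\gamma_k$ are irreducible in $R$. Suppose $\alpha=ab$ in $R$. Since $(\alpha)=\mathfrak{p}_1\cdots\mathfrak{p}_k$ is relatively prime to $I$, so are $(a)$ and $(b)$; hence Lemma \ref{relatively prime ideal factors} forces $(a)$ and $(b)$ to be complementary subproducts of the $\mathfrak{p}_i$. Because $(a)$ and $(b)$ are principal, the corresponding index sets yield zero-sum subsequences of $\{[\mathfrak{p}_i]\}$ in $\Cl(R)$, and the minimality of the original zero-sum sequence forces one of these subsets to be empty; that is, one of $a,b$ is a unit. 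The same argument, applied to the zero-sum sequences $\{[\mathfrak{q}_i]\}$ and $\{[\mathfrak{p}_i],[\mathfrak{q}_i]\}$ (the latter is minimal since a proper subsum vanishing would force $[\mathfrak{p}_i]+[\mathfrak{q}_i]$ to split), gives irreducibility of $\beta$ and the $\gamma_i$.

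Finally, comparing principal ideals yields $(\alpha\beta)=\mathfrak{p}_1\cdots\mathfrak{p}_k\cdot\mathfrak{q}_1\cdots\mathfrak{q}_k=(\gamma_1\cdots\gamma_k)$, so $\alpha\beta=u\gamma_1\cdots\gamma_k$ for some $u\in U(R)$. This single element has factorizations into $2$ and $k$ irreducibles of $R$, so $\rho(R)\geq k/2=D(\Cl(R))/2\geq D(\Cl(\Rbar))/2=\rho(\Rbar)$. The main obstacle is the irreducibility verification, since atoms in a non-integrally-closed order need not be prime; the unique factorization of ideals relatively prime to the conductor, guaranteed by Lemma \ref{relatively prime ideal factors}, is precisely what pushes the standard Dedekind-domain argument through for the order $R$.
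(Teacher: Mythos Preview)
Your proof is correct and follows essentially the same route as the paper's: dispose of the case $\rho(\Rbar)=1$, realize a minimal zero-sum sequence of length $k=D(\Cl(R))$ by prime ideals of $R$ coprime to the conductor, pick inverse-class primes $\mathfrak{q}_i$, use Lemma~\ref{relatively prime ideal factors} to verify that the generators $\alpha,\beta,\gamma_i$ are atoms, and read off $\rho(R)\geq k/2\geq D(\Cl(\Rbar))/2=\rho(\Rbar)$. The only cosmetic difference is that the paper absorbs the unit so that $\alpha\beta=\gamma_1\cdots\gamma_k$ on the nose, whereas you carry $u$; and your parenthetical justification for the irreducibility of $\gamma_i$ is phrased a bit obscurely (what you need is simply that $[\mathfrak{p}_i]\neq 0$, since otherwise $\{[\mathfrak{p}_i]\}$ would already be a proper zero-sum subsequence of the length-$k$ sequence, and hence neither $\mathfrak{p}_i$ nor $\mathfrak{q}_i$ is principal).
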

    \begin{proof}
        First, note that if $\rho(\Rbar)=1$, then the result is trivial. Otherwise, recall that $\rho(\Rbar)=\frac{D(\Cl(\Rbar))}{2}$.

        Now let $k=D(\Cl(R))>1$ (if $k=1$, then $\abs{\Cl(R)}=\abs{\Cl(\Rbar)}=1$ and thus $\rho(\Rbar)=1$) and $\{[I_1],\dots,[I_k]\}$ be a 0-sequence in $\Cl(R)$ with no 0-subsequence. From the previous lemmas, we can now pick a prime ideal $P_i\in [I_i]$ relatively prime to the conductor ideal $I$ of $R$ for each $1\leq i\leq k$. Furthermore, we can pick a prime ideal $Q_i\in [I_i]^{-1}$ relatively prime to $I$ for each $1\leq i\leq k$. Then let $\alpha\in R$ be a generator of the principal ideal $P_1\dots P_k$; $\beta\in R$ be a generator of the principal ideal $Q_1\dots Q_k$; and for each $1\leq i\leq k$, $\gamma_i\in R$ be a generator of the principal ideal $P_iQ_i$. Also assume without loss of generality that $\alpha\beta=\gamma_1\dots\gamma_k$ (if not, these elements are associates; we can absorb the unit into one of the generators). Since $\alpha$, $\beta$, and each $\gamma_i$ are relatively prime to $I$, then any divisor of these elements must also be relatively prime to $I$.

        Now suppose that $\alpha=ab$ for some $a,b\in R$ with $a\notin U(R)$. Then the principal ideals $aR$ and $bR$ are relatively prime to $I$ and must thus factor uniquely into products of prime $R$-ideals relatively prime to $I$ by Lemma \ref{relatively prime ideal factors}. Since $\alpha R=P_1\dots P_k=aR\cdot bR$, we must have (after possible reordering) that $aR=P_1\dots P_i$ and $bR=P_{i+1}\dots P_k$ for some $1\leq i\leq k$. Then $\{[P_1],\dots,[P_i]\}$ is a 0-subsequence of $\{[I_1],\dots,[I_k]\}$ in $\Cl(R)$. However, since this 0-sequence has no proper 0-subsequence, then $i=k$, meaning that $bR=R$, an empty product of prime ideals. Thus, $b\in U(R)$, so $\alpha$ is irreducible in $R$. By similar arguments, $\beta$ and each $\gamma_i$ must also be irreducible in $R$.

        We now have $\alpha\beta=\gamma_1\dots \gamma_k$, with $\alpha$, $\beta$, and each $\gamma_i$ an irreducible element of $R$. Then by definition of elasticity and using Lemma \ref{davenport grows}, $\rho(R)\geq \frac{k}{2}=\frac{D(\Cl(R))}{2}\geq \frac{D(\Cl(\Rbar))}{2}=\rho(\Rbar)$.
    \end{proof}

    This result tells us that in general, when passing from a ring of algebraic integers to an order contained within, factorization can only get ``worse", not improve. Next, we will explore when equality might hold, i.e. when elasticity will be ``maintained" when moving from the number ring to the order. Our first result along these lines follows almost immediately from Lemma \ref{davenport grows} and the proof of Theorem \ref{elasticity grows}, along with Theorem \ref{rago hfd}, which we will discuss later. This also further demonstrates how the locally associated property discussed earlier is related to factorization.
    
    \begin{corollary}
        Let $R$ be an order in a number field. If $\rho(R)=\rho(\Rbar)$, then $R$ is a locally associated order.
    \end{corollary}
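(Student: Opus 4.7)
The plan is to combine Theorem \ref{locally associated orders} with the proof of Theorem \ref{elasticity grows} to reduce the statement to a comparison of class groups, and then invoke Lemma \ref{davenport grows} to upgrade an equality of Davenport constants into an isomorphism of class groups. By condition (6) of Theorem \ref{locally associated orders}, $R$ is a locally associated order if and only if $\Cl(R)\cong \Cl(\Rbar)$, so the entire problem reduces to producing this isomorphism from the elasticity hypothesis.

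First I would handle the generic case $\rho(\Rbar)>1$. In this situation the standard Carlitz-style identity gives $\rho(\Rbar)=D(\Cl(\Rbar))/2$, which is exactly what the proof of Theorem \ref{elasticity grows} exploits at the end. Retracing that proof, one has the chain
$$\rho(R)\geq \tfrac{D(\Cl(R))}{2}\geq \tfrac{D(\Cl(\Rbar))}{2}=\rho(\Rbar),$$
where the first inequality comes from the irreducible factorization $\alpha\beta=\gamma_1\cdots\gamma_k$ built in the proof of Theorem \ref{elasticity grows} and the second inequality is Lemma \ref{davenport grows} applied with $T=\Rbar$. The hypothesis $\rho(R)=\rho(\Rbar)$ collapses both inequalities to equalities; in particular $D(\Cl(R))=D(\Cl(\Rbar))$. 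The equality clause of Lemma \ref{davenport grows} then forces $\Cl(R)\cong \Cl(\Rbar)$, and we conclude via Theorem \ref{locally associated orders}(6).

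The remaining case $\rho(\Rbar)=1$ is the delicate one and is exactly where the forthcoming Theorem \ref{rago hfd} enters, as flagged in the statement of the corollary. When $\Rbar$ is half-factorial, the Carlitz formula degenerates and the two inequalities in the chain above become vacuous, so the elasticity equality no longer controls the Davenport constants directly. Instead one has that both $R$ and $\Rbar$ are HFDs, and the role of the rago-type HFD characterization is to translate the half-factoriality of the order $R$ back into a structural statement about $\Cl(R)$ (matching the well-known $|\Cl(\Rbar)|\leq 2$ condition for $\Rbar$) that again yields $\Cl(R)\cong \Cl(\Rbar)$. I expect this to be the main obstacle: without the clean ``elasticity equals half the Davenport constant'' relation for orders in general, one must appeal to the ad hoc HFD characterization of Theorem \ref{rago hfd} to recover the class group isomorphism and then close the argument through Theorem \ref{locally associated orders}(6) as before.
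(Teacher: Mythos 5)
Your proposal is correct and follows essentially the same route as the paper: the chain $\rho(\Rbar)=D(\Cl(\Rbar))/2\leq D(\Cl(R))/2\leq\rho(R)$ together with the equality clause of Lemma \ref{davenport grows} and Condition 6 of Theorem \ref{locally associated orders} in the nondegenerate case, and Theorem \ref{rago hfd} (HFD $\Rightarrow$ associated $\Rightarrow$ locally associated) in the degenerate case. The only cosmetic difference is that you split on $\rho(\Rbar)=1$ versus $\rho(\Rbar)>1$ while the paper splits on $\abs{\Cl(\Rbar)}=1$ versus $\abs{\Cl(\Rbar)}>1$; both case divisions work since the Carlitz-type identity holds whenever $\abs{\Cl(\Rbar)}>1$.
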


    \begin{proof}
        First, note that if $\abs{\Cl(\Rbar)}=1$, then $\Rbar$ is a UFD (and in particular an HFD). Then $\rho(R)=\rho(\Rbar)$ means that $R$ is an HFD, in which case we may note by Theorem \ref{rago hfd} that $R$ is an associated (and thus locally associated) order.

        If $\abs{\Cl(\Rbar)}> 1$, then using the inequalities shown in the proof of Theorem \ref{elasticity grows}, we have:
        $$\rho(\Rbar)=\frac{D(\Cl(\Rbar))}{2}\leq \frac{D(\Cl(R))}{2}\leq \rho(R).$$
        Thus, if $\rho(R)=\rho(\Rbar)$, then $D(\Cl(\Rbar))=D(\Cl(R))$. Using Lemma \ref{davenport grows}, this tells us that $\Cl(\Rbar)\cong\Cl(R)$, and thus by Condition 6 in Theorem \ref{locally associated orders}, $R$ is a locally associated order.
    \end{proof}
    
    This gives a necessary condition for $\rho(R)=\rho(\Rbar)$. In order to find sufficient conditions, we will need the following lemma describing how units will decompose in more complicated orders.

    \begin{lemma}
        \label{units factor}
        Let $R$ be an associated order in a number field $K$ with conductor ideal $I$. Let $J_1$, $J_2$ be $\Rbar$-ideals containing $I$ such that $J_1$, $J_2$, and $J_3=I(J_1J_2)^{-1}$ are pairwise relatively prime. Denote $R_1=R+J_2J_3$, $R_2=R+J_1J_3$, and $R_3=R+J_3$. Then $U(R_3)=U(R_1)\cdot U(R_2)$, i.e. every unit $u\in U(R_3)$ can be written as a product $u=v_1v_2$, with $v_1\in U(R_1)$ and $v_2\in U(R_2)$.
    \end{lemma}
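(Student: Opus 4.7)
The plan is to exploit the Chinese Remainder-style decomposition of $R/I$ and $\Rbar/I$ coming from the pairwise coprime factorization $I = J_1 J_2 J_3$, and then use the locally associated property (inherited from $R$ being associated via Theorem \ref{ao implies ipo and lao}) to lift a carefully chosen residue pattern to an actual unit of $\Rbar$. Since $R$ is ideal-preserving, Condition 3 of Theorem \ref{ideal-preserving order} gives $R/I \cong (R+J_1)/J_1 \times (R+J_2)/J_2 \times (R+J_3)/J_3$, while the usual CRT gives $\Rbar/I \cong \Rbar/J_1 \times \Rbar/J_2 \times \Rbar/J_3$. By Theorem \ref{intersect orders}, since coprimality gives $J_2 J_3 = J_2 \cap J_3$ and $J_1 J_3 = J_1 \cap J_3$, we can rewrite
\[
R_1 = (R+J_2) \cap (R+J_3), \qquad R_2 = (R+J_1) \cap (R+J_3),
\]
so that $U(R_1) = U(R+J_2) \cap U(R+J_3)$ and $U(R_2) = U(R+J_1) \cap U(R+J_3)$.

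The key observation is that for $v \in U(\Rbar)$, one has $v \in U(R+J_i)$ precisely when $v + J_i \in U((R+J_i)/J_i)$, viewed as an element of the subring $(R+J_i)/J_i \subseteq \Rbar/J_i$. Given $u \in U(R_3) = U(R+J_3)$, I plan to construct $v_2 \in U(R_2)$ with a prescribed residue pattern and then take $v_1 := u v_2^{-1}$ and verify that $v_1 \in U(R_1)$. The natural choice is to try to make the residues of $v_2$ equal $1$ modulo $J_1$ and $J_3$ and equal $u$ modulo $J_2$, up to multiplication by a ``junk'' unit coming from $U(R/I)$.

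To realize this, I would first use CRT to pick $t \in \Rbar$ with $t \equiv 1 \pmod{J_1}$, $t \equiv u \pmod{J_2}$, and $t \equiv 1 \pmod{J_3}$; then $t + I$ is a unit in $\Rbar/I$. Condition 2 of Theorem \ref{locally associated orders} applied to $t + I$ produces $r \in R$ with $r + I \in U(R/I)$ and $\beta \in I$ such that $v_2 := tr + \beta \in U(\Rbar)$. The residues of $v_2$ modulo $I$ are $v_2 \equiv r \pmod{J_1}$, $v_2 \equiv ur \pmod{J_2}$, and $v_2 \equiv r \pmod{J_3}$; since $r + J_i \in U((R+J_i)/J_i)$ for each $i$, this immediately forces $v_2 \in U(R+J_1) \cap U(R+J_3) = U(R_2)$. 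Setting $v_1 := u v_2^{-1}$, the residues are $v_1 \equiv r^{-1} \pmod{J_2}$ (a unit in $(R+J_2)/J_2$) and $v_1 \equiv u r^{-1} \pmod{J_3}$ (a product of units in $(R+J_3)/J_3$), so $v_1 \in U(R+J_2) \cap U(R+J_3) = U(R_1)$, giving $u = v_1 v_2$.

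The main obstacle is identifying the correct target residue pattern $(1, u, 1)$ for $v_2$ so that \emph{both} $v_2$ and $u v_2^{-1}$ land in the appropriate unit subgroups of $\Rbar/J_i$ simultaneously; once that pattern is chosen, the locally associated hypothesis on $R$ handles the lift from residues to an honest unit of $\Rbar$, and the remaining residue bookkeeping is mechanical.
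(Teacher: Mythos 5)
Your proof is correct, but it takes a genuinely different route from the paper's. The paper reduces the lemma to a cardinality identity: after observing that $U(R_1)\cap U(R_2)=U(R)$ (via Theorem \ref{intersect orders}), it shows distinct pairs of cosets in $\quot{U(R_1)}{U(R)}\times\quot{U(R_2)}{U(R)}$ yield distinct products, and then verifies $\abs{\quot{U(R_3)}{U(R)}}=\abs{\quot{U(R_1)}{U(R)}}\cdot\abs{\quot{U(R_2)}{U(R)}}$ by combining the index formula of Condition 4 of Theorem \ref{locally associated orders} with the Chinese Remainder Theorem and the ring decompositions of Condition 3 of Theorem \ref{ideal-preserving order}. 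You instead produce the factorization directly: prescribe the residue pattern $(1,u,1)$ modulo $(J_1,J_2,J_3)$, lift it to an honest unit $v_2=tr+\beta$ of $\Rbar$ using Condition 2 of Theorem \ref{locally associated orders}, and check membership of $v_2$ and $uv_2^{-1}$ in the appropriate unit groups by reading off residues, using $U(R+J_iJ_j)=U(R+J_i)\cap U(R+J_j)$ from Theorem \ref{intersect orders}. Your construction is shorter and explicit, while the paper's counting argument yields the slightly stronger structural fact that the decomposition is unique modulo $U(R)$, i.e.\ $\quot{U(R_3)}{U(R)}\cong\quot{U(R_1)}{U(R)}\times\quot{U(R_2)}{U(R)}$ (not needed for the statement). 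Two small points you should make explicit: your ``key observation'' that a $v\in U(\Rbar)\cap(R+J_i)$ with $v+J_i\in U(\quot{R+J_i}{J_i})$ lies in $U(R+J_i)$ needs the one-line conductor argument --- if $vw=1+j$ with $w\in R+J_i$ and $j\in J_i$, then $v^{-1}=w-v^{-1}j\in R+J_i$ since $v^{-1}j\in J_i$; and the fact that $r+J_i\in U(\quot{R+J_i}{J_i})$ for each $i$ follows most directly from choosing $s\in R$ with $rs\equiv 1\modulo{I}$ rather than from the product decomposition of $\quot{R}{I}$, which Condition 3 of Theorem \ref{ideal-preserving order} states only for the prime factorization of $I$.
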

    \begin{proof}
        First, note that by their definition, $R_1\subseteq R_3$ and $R_2\subseteq R_3$, so $U(R_1)\cdot U(R_2)\subseteq U(R_3)$. We need to show the reverse inclusion. In order to do so, we will first simplify the problem. First, note that if every coset of $\quot{U(R_3)}{U(R)}$ contains an element of $U(R_1)\cdot U(R_2)$, then we are done. This is because then every element $u\in U(R_3)$ can be written as $u=u_1u_2v$, with $u_1\in U(R_1)$, $u_2\in U(R_2)$, and $v\in U(R)\subseteq U(R_2)$. Then $u=u_1(u_2v)\in U(R_1)\cdot U(R_2)$. Thus, it will suffice to show the result modulo $U(R)$, i.e. $$\quot{U(R_3)}{U(R)}=\quot{U(R_1)}{U(R)}\cdot \quot{U(R_2)}{U(R)}.$$
        Now suppose that we have $u_1,v_1\in U(R_1)$ and $u_2,v_2\in U(R_2)$ such that $u_1u_2\equiv v_1v_2\modulo{U(R)}$, i.e. there exists some $w\in U(R)$ such that $u_1u_2=v_1v_2w$. Then $$u_1v_1^{-1}=u_2v_2^{-1}w,$$ with the left-hand side of this identity lying in $U(R_1)$ and the right-hand side lying in $U(R_2)$. Then in fact, both sides lie in $U(R_1)\cap U(R_2)$. Since both $R_1$ and $R_2$ are integral over $R$ and $R_1\cap R_2=R$ by Theorem \ref{intersect orders}, $U(R_1)\cap U(R_2)=U(R)$, and so $u_1\equiv v_1\modulo{U(R)}$ and $u_2\equiv v_2\modulo{U(R)}$. This means that for each distinct choice of cosets $u_1 U(R)\in \quot{U(R_1)}{U(R)}$ and $u_2U(R)\in \quot{U(R_2)}{U(R)}$, we will get a unique product coset $u_1u_2U(R)\in \quot{U(R_3)}{U(R)}$. Then since $\quot{U(R_i)}{U(R)}$ is finite for $i=1,2,3$, it will suffice to show that $$\abs{\quot{U(R_3)}{U(R)}}=\abs{\quot{U(R_1)}{U(R)}}\cdot \abs{\quot{U(R_2)}{U(R)}}.$$

        Now since $R$ is an associated order, we can use Theorems \ref{ao implies ipo and lao} and \ref{inheritance} to conclude that $R$, $R_1$, $R_2$, and $R_3$ are all locally associated orders. In particular, we have from Condition 4 in Theorem \ref{locally associated orders} that $$\abs{\quot{U(\Rbar)}{U(R)}}=\frac{\abs{U(\quot{\Rbar}{I})}}{\abs{U(\quot{R}{I})}},$$ with analogous statements for $R_1$, $R_2$, and $R_3$. Applying these identities, we get the following:
        \begin{align*}
            \abs{\quot{U(R_3)}{U(R)}}&=\frac{\abs{\quot{U(\Rbar)}{U(R)}}}{\abs{\quot{U(\Rbar)}{U(R_3)}}}=\frac{\abs{U(\quot{\Rbar}{I})}\cdot \abs{U(\quot{R_3}{J_3})}}{\abs{U(\quot{\Rbar}{J_3})}\cdot\abs{U(\quot{R}{I})}}\\
            &=\frac{\abs{U(\quot{\Rbar}{J_1})}\cdot\abs{U(\quot{\Rbar}{J_2})}\cdot \abs{U(\quot{\Rbar}{J_3})}\cdot \abs{U(\quot{R_3}{J_3})}}{\abs{U(\quot{\Rbar}{J_3})}\cdot \abs{U(\quot{R}{I})}}\\
            &=\frac{\abs{U(\quot{\Rbar}{J_1})}\cdot\abs{U(\quot{\Rbar}{J_2})}\cdot\abs{U(\quot{R_3}{J_3})}}{\abs{U(\quot{R}{I})}}.
        \end{align*}
        The identities in the first line come from a simple quotient group property and the fact that $R$ and $R_3$ are locally associated orders. The second line follows from applying the Chinese Remainder Theorem to $\quot{\Rbar}{I}$. The final line comes from canceling out the common factor of $\abs{U(\quot{\Rbar}{J_3})}$. By applying largely the same process to $R_1$ and $R_2$, we get:
        $$\abs{\quot{U(R_1)}{U(R)}}=\frac{\abs{U(\quot{\Rbar}{J_1})}\cdot \abs{U(\quot{R_1}{J_2J_3})}}{\abs{U(\quot{R}{I})}},$$
        $$\abs{\quot{U(R_2)}{U(R)}}=\frac{\abs{U(\quot{\Rbar}{J_2})}\cdot \abs{U(\quot{R_1}{J_1J_3})}}{\abs{U(\quot{R}{I})}}.$$
        Finally, we can multiply these last two identities and simplify to obtain:
        $$\abs{\quot{U(R_1)}{U(R)}}\cdot \abs{\quot{U(R_2)}{U(R)}}=\abs{\quot{U(R_3)}{U(R)}}\cdot \frac{\abs{U(\quot{R_1}{J_2J_3})}\cdot \abs{U(\quot{R_2}{J_1J_3})}}{\abs{U(\quot{R_3}{J_3})}\cdot \abs{U(\quot{R}{I})}}$$
        Note that this is precisely the desired identity with an additional fraction multiplied on the right-hand side. To complete the proof, we simply need to show that this fraction is equal to 1. To do so, note that since $R$ is associated, we can again use Theorems \ref{ao implies ipo and lao} and \ref{inheritance} to conclude that $R$ and any order intermediate to $R$ must be ideal-preserving. Then by applying Condition 3 from Theorem \ref{ideal-preserving order}, we have $$\quot{R_1}{J_2J_3}\cong \quot{R+J_2}{J_2}\times \quot{R_3}{J_3},$$
        $$\quot{R_2}{J_1J_2}\cong \quot{R+J_1}{J_1}\times \quot{R_3}{J_3},$$
        $$\quot{R}{I}\cong \quot{R+J_1}{J_1}\times \quot{R+J_2}{J_2}\times \quot{R_3}{J_3}.$$
        Thus,
        $$\quot{R_1}{J_2J_3}\times \quot{R_2}{J_1J_3}\cong \quot{R}{I}\times \quot{R_3}{J_3},$$
        and in particular,
        $$\abs{U(\quot{R_1}{J_2J_3})}\cdot \abs{U(\quot{R_2}{J_1J_3})}=\abs{U(\quot{R}{I})}\cdot \abs{U(\quot{R_3}{J_3})}.$$
        This gives the desired identity above, completing the proof.
    \end{proof}

    With this lemma, we can prove the following result which will lead us to the desired equality of elasticities.

    \begin{theorem}
        \label{irred remain irred}
        Let $R$ be an associated order in a number field $K$ with radical conductor ideal $I$. Then any irreducible element in $R$ remains irreducible in $\Rbar$.
    \end{theorem}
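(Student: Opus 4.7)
The plan is to take an arbitrary factorization $\alpha = xy$ of our irreducible $\alpha \in R$ in $\Rbar$ and produce from it a factorization $\alpha = (xu)(yu^{-1})$ lying entirely in $R$, for a suitably chosen $u \in U(\Rbar)$. Since $\alpha$ is irreducible in $R$, this would force $xu$ or $yu^{-1}$ to be a unit of $R$, whence $x$ or $y$ would be a unit of $\Rbar$, as required. (Note $\alpha \notin U(\Rbar)$, since integrality of $\Rbar$ over $R$ gives $U(R) = R \cap U(\Rbar)$, so this is the correct version of irreducibility in $\Rbar$.)

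The search for such a $u$ is carried out modulo $I$. Since $R$ is associated, Theorem \ref{ao implies ipo and lao} shows that $R$ is both ideal-preserving and locally associated. Because $I$ is radical, I would write $I = P_1 \cdots P_k$ as a product of distinct prime $\Rbar$-ideals. Theorem \ref{ideal-preserving order}(3) combined with the Chinese Remainder Theorem then yields compatible decompositions $\Rbar/I \cong \prod_{i=1}^{k} F_i$ and $R/I \cong \prod_{i=1}^{k} E_i$, where $F_i = \Rbar/P_i$ is a finite field and $E_i = R/(R\cap P_i)$ is a subfield of $F_i$. Writing $\bar x = (\bar x_i)$ and $\bar y = (\bar y_i)$ in $\prod F_i$, the conditions $xu \in R$ and $yu^{-1} \in R$ translate, componentwise, into $\bar x_i \bar u_i \in E_i$ and $\bar y_i \bar u_i^{-1} \in E_i$ for each $i$.

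These coordinate conditions admit an explicit solution: when $\bar y_i \neq 0$, set $\bar u_i = \bar y_i$, so that $\bar x_i \bar u_i = \bar\alpha_i \in E_i$ (since $\alpha \in R$) and $\bar y_i \bar u_i^{-1} = 1 \in E_i$; when $\bar y_i = 0$ and $\bar x_i \neq 0$, set $\bar u_i = \bar x_i^{-1}$; when $\bar x_i = \bar y_i = 0$, set $\bar u_i = 1$. This yields a class $\bar u \in U(\Rbar/I)$ satisfying both componentwise requirements. The main obstacle, and the only place where the full strength of the associated hypothesis is needed beyond the CRT decomposition, is to realize this class by an honest unit of $\Rbar$. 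Here the locally associated property (Theorem \ref{locally associated orders}(2)) supplies $\bar r \in U(R/I)$ and $u \in U(\Rbar)$ with $u + I = \bar u \cdot \bar r$. Since $\bar r \in \prod E_i^{\ast}$, multiplying the previous componentwise identities by $\bar r$ preserves membership in $\prod E_i = R/I$, so $xu + I$ and $y u^{-1} + I$ both lie in $R/I$, giving $xu, yu^{-1} \in R + I = R$ and producing the desired factorization of $\alpha$ in $R$.
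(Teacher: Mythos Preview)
Your proof is correct, and it takes a genuinely different route from the paper's own argument.

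The paper proceeds by first treating the case where $\alpha$ is coprime to $I$ directly, and then, for the general case, partitioning the primes in $I$ according to which of $\beta,\gamma$ they divide and invoking the unit-decomposition Lemma~\ref{units factor} ($U(R_3)=U(R_1)\cdot U(R_2)$), whose proof is a somewhat delicate counting argument using the locally associated and ideal-preserving properties. Your argument bypasses this machinery entirely: by exploiting that radical $I$ makes each $\Rbar/P_i$ a finite field $F_i$ with $R/(R\cap P_i)$ a subfield $E_i$, you construct the needed unit class $\bar u\in U(\Rbar/I)$ coordinate by coordinate (choosing $\bar u_i$ to be $\bar y_i$, $\bar x_i^{-1}$, or $1$ as appropriate), and then use the locally associated property once to lift it to an honest $u\in U(\Rbar)$. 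This is shorter and more transparent for the theorem at hand. The trade-off is that the paper's route through Lemma~\ref{units factor} is designed to be reused: its power-series analogue (Lemma~\ref{units factor ps}) drives the proof of Theorem~\ref{irred remain irred ps}, and your coordinatewise construction in finite fields does not carry over to $\Rbar[[x]]/I[[x]]$ in the same way, since one must manufacture units in $U(\Rbar[[x]])$ rather than merely unit residue classes.
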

    \begin{proof}
        Let $\alpha\in \Irr(R)$, and write $\alpha=\beta\gamma$ for some $\beta,\gamma\in \Rbar$. To show that $\alpha$ remains irreducible in $\Rbar$, we want to show that either $\beta$ or $\gamma$ must be a unit in $\Rbar$. Since $R$ is an associated order, we can pick some $u,v\in U(\Rbar)$ such that $u\beta,v\gamma\in R$. We will use this notation throughout the proof.

        First, assume that $\alpha$ is relatively prime to $I$, i.e. $\alpha+I\in U(\quot{R}{I})$. Then $uv+I=((u\beta)(v\gamma)+I)(\alpha+I)^{-1}\in \quot{R}{I}$. Then $uv\in U(R)$, so $(uv)\alpha$ is an associate of $\alpha$ in $R$ and thus must also be irreducible in $R$. Then either $u\beta$ or $v\gamma$ must be a unit in $R$, and thus either $\beta$ or $\gamma$ must be a unit in $\Rbar$. Then $\alpha$ remains irreducible in $\Rbar$.

        Now removing the assumption that $\alpha$ is relatively prime to $I$, write $I=J_1J_2J_3$ as follows: let $J_1=\beta\Rbar+I$, the minimal divisor of $I$ which contains $\beta$; let $J_2=\gamma\Rbar+IJ_1^{-1}$, the minimal divisor of $I$ which contains $\gamma$ and is relatively prime to $J_1$; and let $J_3=I(J_1J_2)^{-1}$. Note that as $I$ is radical, $J_1$, $J_2$, and $J_3$ are pairwise relatively prime ideals in $\Rbar$. As in the lemma, we will write $R_1=R+IJ_1^{-1}$, $R_2=R+IJ_2^{-1}$, and $R_3=R+J_3$. Then note that $\alpha$ is relatively prime to $J_3$. By the previous case, this tells us that $uv\in U(R_3)$. We can now use the lemma to conclude that there must exist $w_1\in U(R_1)$ and $w_2\in U(R_2)$ such that $(uv)^{-1}=w_1w_2$. Then $\alpha=(w_1w_2)(uv)\alpha=(w_1u\beta)(w_2v\gamma)$. Note that $w_1u\beta\in R_1\cap J_1\subseteq R$ and $w_2v\gamma\in R_2\cap J_2\subseteq R$. Then $\alpha=(w_1u\beta)(w_2v\gamma)$ is a factorization of $\alpha$ into two elements of $R$, so either $w_1u\beta$ or $w_2v\gamma$ is a unit in $R$. Thus, either $\beta$ or $\gamma$ must be a unit in $\Rbar$, so $\alpha$ remains irreducible in $\Rbar$. 
    \end{proof}

    \begin{theorem}
        \label{elasticity equal}
        Let $R$ be an associated order in a number field $K$ with radical conductor ideal $I$. Then for any nonzero, nonunit $\alpha\in R$, $\rho_R(\alpha)=\rho_{\Rbar}(\alpha)$. Moreover, $\rho(R)=\rho(\Rbar)$.
    \end{theorem}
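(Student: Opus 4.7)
The plan is to establish the pointwise identity $\rho_R(\alpha) = \rho_{\Rbar}(\alpha)$ for each nonzero nonunit $\alpha \in R$; taking suprema and combining with Theorem~\ref{elasticity grows} will then yield $\rho(R) = \rho(\Rbar)$. One direction comes essentially for free from Theorem~\ref{irred remain irred}: every $R$-irreducible remains $\Rbar$-irreducible, so every $R$-factorization of $\alpha$ is automatically an $\Rbar$-factorization, and hence $\rho_R(\alpha) \le \rho_{\Rbar}(\alpha)$.

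For the reverse direction, I need to show that any $\Rbar$-factorization $\alpha = \beta_1 \cdots \beta_n$ into $\Rbar$-irreducibles can be matched by an $R$-factorization of $\alpha$ of the same length. The one preliminary fact I would record is that $U(\Rbar) \cap R = U(R)$: if $r \in R$ has $r^{-1} \in \Rbar$, then the monic integral equation satisfied by $r^{-1}$ over $R$ rearranges to express $r^{-1}$ as a polynomial in $r$ with coefficients in $R$. An immediate consequence is that any $\Rbar$-irreducible element lying in $R$ is automatically $R$-irreducible: a nontrivial factorization in $R$ would force one factor to be an $\Rbar$-unit lying in $R$, hence a unit of $R$, contradicting nontriviality.

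I would then induct on $n$. The base case $n = 1$ is the preliminary fact just observed. For the inductive step, split $\alpha = \beta \cdot \gamma$ with $\beta = \beta_1 \in \Irr(\Rbar)$ and $\gamma = \beta_2 \cdots \beta_n \in \Rbar$, and apply the construction from the proof of Theorem~\ref{irred remain irred} verbatim to this two-factor split. That construction picks $u,v \in U(\Rbar)$ with $u\beta, v\gamma \in R$; forms the radical decomposition $I = J_1 J_2 J_3$ with $J_1 = \beta\Rbar + I$, $J_2 = \gamma\Rbar + IJ_1^{-1}$, $J_3 = I(J_1J_2)^{-1}$; uses that $\alpha$ is coprime to $J_3$ (since no prime dividing $J_3$ divides $\beta$ or $\gamma$) to conclude that $uv \in U(R+J_3)$ via the coprime-to-conductor case; invokes Lemma~\ref{units factor} to write $(uv)^{-1} = w_1 w_2$ with $w_1 \in U(R+J_2J_3)$, $w_2 \in U(R+J_1J_3)$; and finally appeals to Theorem~\ref{intersect orders} to show $\alpha = (w_1u\beta)(w_2v\gamma)$ is a factorization in $R$. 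The decisive feature is that this construction never invokes irreducibility of $\gamma$, so setting $\gamma_1 = w_1u\beta$ and $\gamma' = w_2v\gamma$ gives $\alpha = \gamma_1 \gamma'$ in $R$ with $\gamma_1$ an $\Rbar$-associate of $\beta_1$ and $\gamma'$ an $\Rbar$-associate of $\beta_2 \cdots \beta_n$. Absorbing the intervening unit into $\beta_2$ exhibits an $\Rbar$-factorization of $\gamma'$ of length $n-1$, to which the inductive hypothesis applies to give $\gamma' = \gamma_2 \cdots \gamma_n$ in $R$ with each $\gamma_i$ an $\Rbar$-associate of $\beta_i$. The preliminary fact then guarantees each $\gamma_i$ is $R$-irreducible, so $\alpha = \gamma_1\gamma_2\cdots\gamma_n$ is the desired $R$-factorization of length $n$.

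The main obstacle is simply verifying that every step of the Theorem~\ref{irred remain irred} construction transfers intact when $\gamma$ is only assumed to be a nonzero element of $\Rbar$ rather than an $\Rbar$-irreducible, and this reduces essentially to the coprimality of $\alpha$ to $J_3$, which is a prime-avoidance check that goes through unchanged. Once this is confirmed, the induction closes, the sets of $R$- and $\Rbar$-factorization lengths of $\alpha$ coincide, so $\rho_R(\alpha) = \rho_{\Rbar}(\alpha)$; taking the supremum over nonunit $\alpha \in R$ gives $\rho(R) \le \rho(\Rbar)$, and the matching lower bound is Theorem~\ref{elasticity grows}.
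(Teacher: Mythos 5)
Your proposal is correct and follows essentially the same route as the paper: the easy inequality via Theorem~\ref{irred remain irred}, and the reverse inequality by lifting an $\Rbar$-factorization to an $R$-factorization of the same length using the associated-order property, the pairwise-coprime decomposition of the radical conductor, Lemma~\ref{units factor}, and Theorem~\ref{intersect orders}. The only difference is organizational --- you peel off one irreducible factor at a time by induction, whereas the paper builds all the ideals $J_1,\dots,J_{k+1}$ simultaneously and iterates the unit factorization $U(R_{k+1})=U(R_1)\cdots U(R_k)$ in one pass --- and your observation that the two-factor construction never uses irreducibility of $\gamma$ is exactly what makes the induction close.
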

    \begin{proof}
        By the previous theorem, any irreducible in $R$ will remain irreducible in $\Rbar$. Then for any nonzero, nonunit $\alpha\in R$, any factorization of $\alpha$ into irreducibles in $R$ is also a factorization of $\alpha$ into irreducibles in $\Rbar$. Then $\ell_R(\alpha)\subseteq \ell_{\Rbar}(\alpha)$, so $\rho_R(\alpha)\leq \rho_{\Rbar}(\alpha)$ for every nonzero, nonunit $\alpha\in R$. Note that since $R\subseteq \Rbar$, this also gives $\rho(R)\leq \rho(\Rbar)$. Then by Theorem \ref{elasticity grows}, $\rho(R)=\rho(\Rbar)$.

        Now let $\alpha=\pi_1\dots\pi_k$ be a factorization of $\alpha\in R$ into irreducibles $\pi_i\in \Irr(\Rbar)$. Since $R$ is an associated order, we can find $u_1,\dots,u_k\in U(\Rbar)$ such that $u_i\pi_i\in R$ for $1\leq i\leq k$. Now define ideals $J_1:=\pi_1\Rbar+I$, the minimal divisor of $I$ containing $\pi_i$, and for $2\leq i\leq k$, define $J_i:=\pi_i\Rbar+I(J_1\dots J_{i-1})^{-1}$, the minimal ideal dividing $I$ which contains $\pi_i$ and is relatively prime to each $J_j$ for $1\leq j<i$. Finally, define $J_{k+1}:=I(J_1\dots J_k)^{-1}$ so that $I=J_1\dots J_{k+1}$. Since $I$ is radical, the $J_i$'s are pairwise relatively prime. For ease of notation, we will also define $R_{k+1}:=R+J_{k+1}$, and for $1\leq i\leq k$, $R_i:=R+IJ_i^{-1}$. As seen in the previous proof, since $\alpha$ is relatively prime to $J_{k+1}$, $u_1\dots u_k\in U(R_{k+1})$. Furthermore, we can repeatedly apply Lemma \ref{units factor} to get $$U(R_{k+1})=U(R_1)\dots U(R_k)$$
        Then there exist $v_i\in U(R_i)$ for $1\leq i\leq k$ such that $(u_1\dots u_k)^{-1}=v_1\dots v_k$, and so $\alpha = (v_1u_1\pi_1)\dots(v_ku_k\pi_k)$. Each $v_iu_i\pi_i\in R_i\cap J_i\subseteq R$, so this is a factorization of $\alpha$ in $R$. Furthermore, each $v_iu_i\pi_i$ is an associated of $\pi_i$ in $\Rbar$, ans is thus also irreducible in $\Rbar$ (and must thus be irreducible in $R$). Then any irreducible factorization $\alpha=\pi_1\dots\pi_k$ in $\Rbar$ gives rise in this way to an irreducible factorization $\alpha=(v_1u_1\pi_1)\dots(v_ku_k\pi_k)$ in $R$ of the same length. Then for any nonzero, nonunit $\alpha\in R$, $\ell_{\Rbar}(\alpha)\subseteq\ell_R(\alpha)$, so $\rho_{\Rbar}(\alpha)\leq \rho_R(\alpha)$. Thus, $\rho_{\Rbar}(\alpha)=\rho_R(\alpha)$.
    \end{proof}

    \section{Results for $R[[x]]$}
    With these results shown for orders in a number field, we can now turn our attention to the rings of formal power series over these orders. One may be surprised to see that, with some adjustments to the proofs, we can extend these results to analogous statements for the power series rings. To start, we show once again that we can decompose units in a similar manner to before. In the case of power series rings, it is most convenient to do so in two parts.

    \begin{lemma}
        \label{units factor ps lemma}
        Let $R$ be an associated order in a number field $K$. Let $I$ be the conductor ideal of $R$ and $J_1$, $J_2$ relatively prime $\Rbar$-ideals such that $I=J_1J_2$. Denote $R_1=R+J_1$ and $R_2=R+J_2$. Then $U(\Rbar[[x]])=U(R_1[[x]])\cdot U(R_2[[x]])$.
    \end{lemma}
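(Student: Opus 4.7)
The reverse inclusion $U(R_1[[x]]) \cdot U(R_2[[x]]) \subseteq U(\Rbar[[x]])$ is immediate since $R_1, R_2 \subseteq \Rbar$, so the plan focuses on the forward inclusion. Given $f = \sum_{n \geq 0} a_n x^n \in U(\Rbar[[x]])$ (so that $a_0 \in U(\Rbar)$), I would construct $g = \sum_{n \geq 0} b_n x^n \in R_1[[x]]$ and $h = \sum_{n \geq 0} c_n x^n \in R_2[[x]]$ coefficient by coefficient, with $gh = f$, $b_0 \in U(R_1)$, and $c_0 \in U(R_2)$. The last two conditions force $g \in U(R_1[[x]])$ and $h \in U(R_2[[x]])$, which is what we need.

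For the base case $n=0$, I would invoke Lemma \ref{units factor} with its third ideal specialized to $\Rbar$ itself: since $J_1J_2=I$, the ideal $I(J_1J_2)^{-1}=\Rbar$ is trivially relatively prime to $J_1$ and $J_2$ (and contains $I$), and the lemma collapses to $U(\Rbar) = U(R_1)\cdot U(R_2)$. This yields a factorization $a_0 = b_0 c_0$ with $b_0 \in U(R_1)$ and $c_0 \in U(R_2)$.

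For the inductive step, assume $b_0,\ldots,b_{N-1}$ and $c_0,\ldots,c_{N-1}$ have been chosen so that the coefficients of $gh$ and $f$ agree through degree $N-1$. Matching the $x^N$-coefficients reduces to solving
$$b_0 c_N + c_0 b_N \;=\; d_N, \qquad d_N := a_N - \sum_{0 < i < N} b_i c_{N-i} \;\in\; \Rbar,$$
for some $b_N \in R_1$ and $c_N \in R_2$. The key maneuver is to use the relative primality $J_1+J_2=\Rbar$ to fix, once and for all, elements $e_1 \in J_1$ and $e_2 \in J_2$ with $e_1+e_2=1$, and then set $b_N := c_0^{-1} d_N e_1$ and $c_N := b_0^{-1} d_N e_2$. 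The identity $b_0 c_N + c_0 b_N = d_N(e_1+e_2) = d_N$ is immediate, while the factors $e_1, e_2$ absorb the ambient $\Rbar$-factors: $b_N \in \Rbar\cdot J_1 = J_1 \subseteq R_1$ and $c_N \in \Rbar\cdot J_2 = J_2 \subseteq R_2$, as required.

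The main obstacle is discovering this split. A naive additive decomposition $d_N = r_1 + r_2$ with $r_i \in R_i$ (which exists since $R_1+R_2=\Rbar$) does not obviously yield coefficients in the correct rings once the multiplicative constraint imposed by $b_0$ and $c_0$ is taken into account. The pair $(e_1,e_2)$ coming from $J_1+J_2=\Rbar$ is the right device: it simultaneously solves the additive equation and pins the resulting elements into $R_1$ and $R_2$ through the $J_i$-absorption. Once this step is in hand, the rest of the argument is a routine coefficient-by-coefficient induction, and the associated-order hypothesis on $R$ enters only in the base case through its use in Lemma \ref{units factor}.
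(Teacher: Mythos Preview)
Your proof is correct and follows essentially the same route as the paper: both handle the constant term via Lemma~\ref{units factor} (specialized so that $J_3=\Rbar$) and then solve the degree-$N$ coefficient equation $b_0c_N+c_0b_N=d_N$ by exploiting $J_1+J_2=\Rbar$. The paper phrases the key step as the chain $\Rbar\supseteq u_2R_1+u_1R_2\supseteq u_2J_1+u_1J_2=J_1+J_2=\Rbar$ (and even remarks that one could choose the coefficients in $J_1$ and $J_2$), which is exactly what your explicit choice $b_N=c_0^{-1}d_Ne_1$, $c_N=b_0^{-1}d_Ne_2$ accomplishes.
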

    \begin{proof}
        First, we recall that given a commutative ring with identity $T$, $f\in T[[x]]$ is a unit if and only if its constant term is a unit in $T$. Thus, an arbitrary element of $U(\Rbar[[x]])$ looks like $u(x)=u_0+b_1x+b_2x^2+\dots$, with $u_0\in U(\Rbar)$ and each $b_i\in \Rbar$. To prove the lemma, we would like to construct $v_1(x)=u_1+c_1x+c_2x^2+\dots\in U(R_1[[x]])$ and $v_2(x)=u_2+d_1x+d_2x^2+\dots\in U(R_2[[x]])$ such that $u=v_1v_2$ (the other inclusion is trivial). By Lemma \ref{units factor}, we immediately know that it is possible to pick constant terms $u_1\in U(R_1)$ and $u_2\in U(R_2)$ such that $u_0=u_1u_2$. Then for each $i\geq 1$, we need to find $c_i\in R_1$ and $d_i\in R_2$ such that the equation $$b_k=c_ku_2+\sum_{i=1}^{k-1}c_id_{k-i}+d_ku_1$$ is satisfied for each $k\geq 1$. Note that this equation can equivalently be stated as $$c_ku_2+d_ku_1=b_k-\sum_{i=1}^{k-1}c_id_{k-i}.$$
        If we assume for some $k\geq 1$ that we have constructed $c_i,d_i$ for $1\leq i<k$ such that each previous equation is satisfied, this becomes a problem of finding $c_k\in R_1$ and $d_k\in R_2$ such that $c_ku_2+d_ku_1$ is equal to some fixed element of $\Rbar$. Then note that $\Rbar\supseteq u_2R_1+u_1R_2\supseteq u_2J_1+u_1J_2= J_1+J_2=\Rbar$. Then finding such $c_k$ and $d_k$ is possible for every $k\geq 1$ (in fact, we could select $c_k\in J_1$ and $d_k\in J_2$ if we so wished), so we can indeed construct $v_1\in U(R_1[[x]])$ and $v_2\in U(R_2[[x]])$ such that $u=v_1v_2$. Thus, $U(\Rbar[[x]])=U(R_1[[x]])\cdot U(R_2[[x]])$.
    \end{proof}

    \begin{lemma}
        \label{units factor ps}
        Let $R$ be an associated order in a number field $K$ with conductor ideal $I$. Let $J_1$, $J_2$ be $\Rbar$-ideals containing $I$ such that $J_1$, $J_2$, and $J_3=I(J_1J_2)^{-1}$ are pairwise relatively prime. Denote $R_1=R+J_2J_3$, $R_2=R+J_1J_3$, and $R_3=R+J_3$. Then $U(R_3[[x]])=U(R_1[[x]])\cdot U(R_2[[x]])$.
    \end{lemma}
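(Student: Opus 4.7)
The plan is to mimic the coefficient-by-coefficient construction used in Lemma~\ref{units factor ps lemma}, but now with the non-power-series three-ideal decomposition Lemma~\ref{units factor} doing the work on the constant term. The inclusion $U(R_1[[x]]) \cdot U(R_2[[x]]) \subseteq U(R_3[[x]])$ is trivial: since $J_2 J_3,\, J_1 J_3 \subseteq J_3$, we have $R_1, R_2 \subseteq R_3$. The real content is to take an arbitrary $u(x) = u_0 + b_1 x + b_2 x^2 + \cdots \in U(R_3[[x]])$ and factor it as $u = v_1 v_2$ with $v_1 \in U(R_1[[x]])$ and $v_2 \in U(R_2[[x]])$.

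First, apply Lemma~\ref{units factor} to split the constant term: write $u_0 = u_1 u_2$ with $u_1 \in U(R_1)$ and $u_2 \in U(R_2)$. Then for each $k \geq 1$, assuming $c_1, \ldots, c_{k-1} \in R_1$ and $d_1, \ldots, d_{k-1} \in R_2$ have been chosen so that the first $k-1$ coefficients of $v_1 v_2$ match those of $u$, we need $c_k \in R_1$ and $d_k \in R_2$ satisfying
\[
c_k u_2 + d_k u_1 \;=\; b_k - \sum_{i=1}^{k-1} c_i d_{k-i}.
\]
A short expansion shows $R_1 R_2 \subseteq R + J_1 J_3 + J_2 J_3 + I = R + (J_1 + J_2) J_3 = R + J_3 = R_3$, so the right-hand side lies in $R_3$ by induction. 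Thus the inductive step reduces entirely to establishing the single key identity
\[
u_2 R_1 + u_1 R_2 \;\supseteq\; R_3
\]
as additive subgroups of $\Rbar$.

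The verification of this identity is the heart (and main obstacle) of the argument, so I would handle it in two pieces. On the one hand, because $u_1, u_2 \in U(\Rbar)$, we have $u_2 R_1 \supseteq u_2 J_2 J_3 = J_2 J_3$ and $u_1 R_2 \supseteq u_1 J_1 J_3 = J_1 J_3$, whence
\[
u_2 R_1 + u_1 R_2 \;\supseteq\; J_1 J_3 + J_2 J_3 \;=\; (J_1 + J_2) J_3 \;=\; J_3,
\]
using pairwise coprimality of $J_1, J_2, J_3$. On the other hand, reducing modulo $J_3$: since $R_1 + J_3 = R + J_2 J_3 + J_3 = R + J_3 = R_3$ and likewise $R_2 + J_3 = R_3$, both $R_1$ and $R_2$ surject onto $R_3/J_3$, and $u_1, u_2$ are units in $R_3/J_3$; hence $u_2 R_1 + u_1 R_2 + J_3 \supseteq R_3$. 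Combining these two facts yields $u_2 R_1 + u_1 R_2 \supseteq R_3$. Once this identity is in hand, the inductive construction of $v_1$ and $v_2$ proceeds exactly as in the proof of Lemma~\ref{units factor ps lemma}, and (since the constant terms $u_1, u_2$ are units in $R_1, R_2$ respectively) the resulting series are genuine units in $R_1[[x]]$ and $R_2[[x]]$.
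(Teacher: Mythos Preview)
Your argument is correct, and it takes a genuinely different (and arguably more direct) route than the paper. The paper first invokes Lemma~\ref{units factor ps lemma} with the coprime splitting $I=J_1\cdot(J_2J_3)$ to factor $u$ as $v_1v_2$ with $v_1\in U((R+J_1)[[x]])$ and $v_2\in U((R+J_2J_3)[[x]])$, and then constructs a correcting unit $w'\in U(\Rbar[[x]])$ via the Chinese Remainder Theorem and the locally associated property so that $v_1w'\in R_2[[x]]$ and $v_2w'^{-1}\in R_1[[x]]$. You instead rerun the coefficient-by-coefficient construction of Lemma~\ref{units factor ps lemma} directly in the three-ideal setting, reducing the inductive step to the additive identity $u_2R_1+u_1R_2\supseteq R_3$, which you then verify by the clean two-part argument (contains $J_3$; surjects modulo $J_3$). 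Your approach avoids the CRT correction and the explicit appeal to the locally associated property (that property is still used, but only through Lemma~\ref{units factor} on the constant term), at the cost of having to check the containment $R_1R_2\subseteq R_3$ and the key additive identity by hand. The paper's approach has the virtue of making Lemma~\ref{units factor ps lemma} do double duty; yours has the virtue of being self-contained once Lemma~\ref{units factor} is in hand.
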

    \begin{proof}
        Similarly to the proof of Lemma \ref{units factor}, it is trivial to show that $U(R_1[[x]])\cdot U(R_2[[x]])\subseteq U(R_3[[x]])$. We need to show the reverse inclusion. Let $u\in U(R_3[[x]])$. Since this is an element in $U(\Rbar[[x]])$, we can use the previous lemma to construct $v_1\in U((R+J_1)[[x]])$ and $v_2\in U((R+J_2)[[x]])$ such that $u=v_1v_2$ (the lemma would actually give $v_2\in U((R+J_2J_3)[[x]])$, but we do not need to be this specific). Now note that since $J_1[[x]]$, $J_2[[x]]$, and $J_3[[x]]$ are pairwise relatively prime ideals of $\Rbar[[x]]$, the Chinese Remainder Theorem gives us the following:
        $$\quot{\Rbar}{I}[[x]]\cong \quot{\Rbar}{J_1}[[x]]\times \quot{\Rbar}{J_2}[[x]]\times \quot{\Rbar}{J_3}[[x]].$$
        Then we will pick some coset representative $w\in \Rbar[[x]]$ of the congruence class modulo $I[[x]]$ such that $w\equiv 1\modulo{J_1[[x]]}$, $w\equiv 1\modulo{J_2[[x]]}$, and $w\equiv v_1^{-1}\modulo{J_3[[x]]}$. Note that although $w$ itself is not necessarily a unit in $\Rbar[[x]]$, its constant term, say $w_0$, is congruent to a unit in $U(\Rbar)$ modulo $J_1$, $J_2$, and $J_3$, so $w_0+I\in U(\quot{\Rbar}{I})$. Since $R$ is an associated (and thus locally associated) order, there exists $r+I\in U(\quot{R}{I})$ and $\beta\in I$ such that $w_0r+\beta\in U(\Rbar)$. Define $w'=wr+\beta\in U(\Rbar[[x]])$ and $s\in R$ such that $s+I=(r+I)^{-1}\in U(\quot{R}{I})$.

        Now note that $u=v_1v_2=(v_1w')(v_2w'^{-1})$. Moreover:
        $$v_1w'\equiv v_1(wr+\beta)\equiv v_1r\modulo{J_1[[x]]}\implies v_1w'\in (R+J_1)[[x]];$$
        $$v_1w'\equiv v_1(wr+\beta)\equiv v_1v_1^{-1}r\equiv r\modulo{J_3[[x]]}\implies v_1w'\in R_3[[x]];$$
        $$v_2w'^{-1}\equiv v_2(wr+\beta)^{-1}\equiv v_2s\modulo{J_2[[x]]}\implies v_2w'^{-1}\in (R+J_2)[[x]]$$
        $$v_2w'^{-1}\equiv v_2(wr+\beta)^{-1}\equiv v_2v_1s\equiv us\modulo{J_3[[x]]}\implies v_2 w'^{-1}\in R_3[[x]].$$
        Then since $R$ is an associated (and thus ideal-preserving) order and $J_1$, $J_2$, and $J_3$ are pairwise relatively prime, $v_1w'\in (R+J_1)[[x]]\cap R_3[[x]]=R_2[[x]]$ and $v_2w'^{-1}\in (R+J_2)[[x]]\cap R_3[[x]]=R_1[[x]]$. Furthermore, since $v_1$, $v_2$, and $w'$ are all units in $\Rbar[[x]]$, then $v_1w'\in U(R_2[[x]])$ and $v_2w'\in U(R_1[[x]])$, with $u=(v_1w')(v_2w'^{-1})$. Thus, $u\in U(R_1[[x]])\cdot U(R_2[[x]])$, meaning that $U(\Rbar[[x]])=U(R_1[[x]])\cdot U(R_2[[x]])$.
    \end{proof}

    The final lemma we will need to show the desired elasticity result for power series is one that relates back to the concept of an associated subring. This will allow us to take advantage of a very nice relationship between the rings $R[[x]]$ and $\Rbar[[x]]$.

    \begin{lemma}
        \label{associated ps}
        Let $R$ be an associated order in a number field $K$ with radical conductor ideal $I$. Then $R[[x]]$ is an associated subring of $\Rbar[[x]]$.
    \end{lemma}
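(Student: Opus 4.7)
The plan is to pass to the quotient by $I[[x]]$ and exploit the Chinese Remainder Theorem together with the fact that, modulo each prime factor of $I$, we are working in a finite field. First, since $I\subseteq R$ we have $I[[x]]\subseteq R[[x]]$, so $R[[x]]+I[[x]]=R[[x]]$. Therefore, to show that some $u\in U(\Rbar[[x]])$ satisfies $fu\in R[[x]]$, it suffices to produce a $u$ whose reduction $\bar u$ modulo $I[[x]]$ sends $\bar f=f+I[[x]]$ into $(R/I)[[x]]$, regarded as a subring of $(\Rbar/I)[[x]]\cong \Rbar[[x]]/I[[x]]$.

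Since $R$ is associated, it is both ideal-preserving and locally associated by Theorem \ref{ao implies ipo and lao}. Because $I$ is radical we may write $I=P_1\cdots P_n$ for distinct prime $\Rbar$-ideals $P_i$, and Condition 3 of Theorem \ref{ideal-preserving order} together with the Chinese Remainder Theorem yields compatible decompositions $(\Rbar/I)[[x]]\cong \prod_{i=1}^n(\Rbar/P_i)[[x]]$ and $(R/I)[[x]]\cong \prod_{i=1}^n((R+P_i)/P_i)[[x]]$, with each $\Rbar/P_i$ a finite field and each $(R+P_i)/P_i$ a subfield. In each factor, the problem becomes easy: writing the image of $f$ as $\bar f_i\in (\Rbar/P_i)[[x]]$, either $\bar f_i=0$ (in which case take $\bar u_i=1$) or $\bar f_i=x^{k_i}\bar g_i$ where $\bar g_i$ has nonzero, hence invertible, constant term, so $\bar u_i:=\bar g_i^{-1}$ is a unit and $\bar f_i\bar u_i=x^{k_i}$ lies in $((R+P_i)/P_i)[[x]]$. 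Gluing via CRT produces a unit $\bar u\in U((\Rbar/I)[[x]])$ with $\bar f\bar u\in (R/I)[[x]]$.

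The main obstacle is the final lifting step: the constant term $\bar u_0\in U(\Rbar/I)$ need not lift to a unit of $\Rbar$, so $\bar u$ cannot be lifted coefficient-by-coefficient to a unit of $\Rbar[[x]]$ without adjustment. This is exactly where the locally associated hypothesis is needed. Condition 2 of Theorem \ref{locally associated orders} provides $r+I\in U(R/I)$ such that some lift of $\bar u_0(r+I)$ is a unit $u_0\in U(\Rbar)$. Replacing $\bar u$ by $\bar u(r+I)$ preserves the containment $\bar f\bar u\in (R/I)[[x]]$, since $(R/I)[[x]]$ is a module over $R/I$. Now lift each higher coefficient of the adjusted $\bar u$ arbitrarily to $\Rbar$; the resulting $u\in U(\Rbar[[x]])$ satisfies $u\equiv\bar u\pmod{I[[x]]}$, so $fu\equiv\bar f\bar u\pmod{I[[x]]}$ lies in $(R/I)[[x]]$, giving $fu\in R[[x]]+I[[x]]=R[[x]]$, as required.
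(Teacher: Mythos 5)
Your argument is correct, and it takes a genuinely different route from the paper's. The paper works directly in $\Rbar[[x]]$: it first treats the case of prime $I$ by a coefficient-by-coefficient recursion (solving $r_0b_k+r_ku_0=\cdots$ using $r_0\Rbar+u_0R=\Rbar$), and then handles a general radical $I$ by induction on the number of prime factors, splitting $I=J_1J_2$ into relatively prime pieces and invoking Lemma \ref{units factor ps lemma} together with $R_1[[x]]\cap R_2[[x]]=R[[x]]$. You instead pass to $\Rbar[[x]]/I[[x]]\cong(\Rbar/I)[[x]]$, use the ideal-preserving property and CRT to reduce to power series over the finite fields $\Rbar/P_i$ — where every nonzero series is $x^{k}$ times a unit, so the modular problem is trivial — and then concentrate all the difficulty in a single lifting step, which is exactly where the locally associated property (Condition 2 of Theorem \ref{locally associated orders}) is needed to arrange a unit constant term; the higher coefficients lift freely since only the constant term governs invertibility in $\Rbar[[x]]$. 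All the individual steps check out: the compatibility of the two CRT decompositions is precisely what Condition 3 of Theorem \ref{ideal-preserving order} provides, the preimage of $(R/I)[[x]]$ in $\Rbar[[x]]$ is $R[[x]]$ because $I\subseteq R$, and multiplying by $r+I\in U(R/I)$ preserves membership in $(R/I)[[x]]$. Your proof is shorter and isolates where each hypothesis (radical, ideal-preserving, locally associated) enters; the paper's recursive construction has the advantage that its machinery (Lemma \ref{units factor ps lemma} and the intersection identities) is reused in the subsequent irreducibility and elasticity arguments, so the authors develop it anyway. Note also that your reduction makes transparent why radicality is essential: for $a_i>1$ the rings $\Rbar/P_i^{a_i}$ are local but not fields, and a nonzero constant term no longer guarantees a unit.
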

    \begin{proof}
        Let $a(x)=a_0+a_1x+a_2x^2+\dots\in \Rbar[[x]]$. We want to show that there exist $r(x)=r_0+r_1x+r_2x^2+\dots\in R[[x]]$ and $u_0+b_1x+b_2x^2+\dots\in U(\Rbar[[x]])$ such that $a=ru$. First, note that since $R$ is an associated order, we can find constant terms $r_0\in R$ and $u_0\in U(\Rbar)$ such that $a_0=r_0u_0$.

        Working toward an inductive argument, we will start by assuming that $I$ is prime. If $a\in I[[x]]$, then we could choose $r=a$ and $u=1$. Otherwise, $a\notin I[[x]]$; for now, we will also assume that $a_0\in I$. Since $a_0=r_0u_0$, this means that $r_0\notin I$; since $I$ is prime, $r_0+I\in U(\quot{\Rbar}{I})$. We now need to construct $r_i\in R$ and $b_i\in \Rbar$ for $i\in \bN$ such that for each $k\in\bN$,
        $$a_k=r_0b_k+\sum_{i=1}^{k-1}r_ib_{k-i}+r_ku_0;$$ equivalently,
        $$r_0b_k+r_ku_0=a_k-\sum_{i=1}^{k-1}r_ib_{k-i}.$$
        Assume that for some $k\in\bN$, we have selected $r_i$ and $b_i$ for every $1\leq i<k$ such that all previous equations are satisfied. Then note that $\Rbar\supseteq r_0\Rbar+u_0R\supseteq r_0\Rbar+I=\Rbar$. Then selecting $r_k\in R$ and $b_k\in\Rbar$ to satisfy the $k^{th}$ equation is always possible, so we can construct $r\in R[[x]]$ and $u\in U(\Rbar[[x]])$ such that $a=ru$.

        Now assume that $I$ is prime and $a\notin I[[x]]$, but $a_0\in I$. Since $a\notin I[[x]]$, there is some minimal $j\in\bN$ such that $a_j\notin I$ (i.e. $a_i\in I$ for every $0\leq i<j$). Then write $a(x)=(a_0+a_1x+\dots+a_{j-1}x^{j-1})+x^j(a_j+a_{j+1}x+a_{j+2}x^2+\dots)=b+cx^j$, with $b\in I[x]$ and $c\in \Rbar[[x]]$ having constant term $a_j\notin I$. By the previous case, there must exist $r\in R[[x]]$ and $u\in U(\Rbar[[x]])$ such that $c=ru$. Then $a=b+cx^j=(bu^{-1}+rx^j)u$, with $bu^{-1}+rx^j\in R[[x]]$ and $u\in U(\Rbar[[x]])$. This completes the case when $I$ is prime.

        We can now tackle the case when $I$ is not prime (but is still radical). For the inductive argument, assume that for every $\Rbar$-ideal $J$ properly dividing $I$, $(R+J)[[x]]$ is an associated subring of $\Rbar[[x]]$ (note that any such $R+J$ is an associated order with radical conductor ideal $J$). Since $I$ is not prime, we can write $I=J_1J_2$, with neither $J_1$ nor $J_2$ equal to $I$; since $I$ is radical, $J_1$ and $J_2$ are relatively prime. For ease of notation, denote $R_1=R+J_1$ and $R_2=R+J_2$. By the inductive hypothesis, we can select $r_1\in R_1[[x]]$ and $u\in U(\Rbar[[x]])$ such that $a=r_1u$. Furthermore, we can select $r_2\in R_2[[x]]$ and $v\in U(\Rbar[[x]])$ such that $r_1=r_2v$. Finally, by Lemma \ref{units factor ps lemma}, we can select $w_1\in U(R_1[[x]])$ and $w_2\in R_2[[x]])$ such that $v=w_1w_2$. Then note that $r_1=r_2v=r_2w_1w_2\implies r_1w_1^{-1}=r_2w_2$. Since the left-hand side of this equality lies in $R_1[[x]]$ and the right-hand side lies in $R_2[[x]]$, both must actually lie in $R_1[[x]]\cap R_2[[x]]=R[[[x]]$. Then $a=r_1u=r_2vu=(r_2w_2)(w_1u)\in R[[x]]\cdot U(\Rbar[[x]])$. Thus, $R[[x]]$ is an associated subring of $\Rbar[[x]]$.
    \end{proof}

    Now, as in the earlier case of the orders themselves, we can consider the irreducible elements in $R[[x]]$. Then, we can use this result to tackle the question of elasticity.

    \begin{theorem}
        \label{irred remain irred ps}
        Let $R$ be an associated order in a number field $K$ with radical conductor ideal $I$. Then any irreducible element in $R[[x]]$ remains irreducible in $\Rbar[[x]]$.
    \end{theorem}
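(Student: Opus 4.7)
The plan is to mirror the proof of Theorem~\ref{irred remain irred} in the power-series setting, using Lemmas~\ref{associated ps} and~\ref{units factor ps} in place of their order-level analogues. Let $\alpha\in\Irr(R[[x]])$ factor as $\alpha=\beta\gamma$ in $\Rbar[[x]]$; the goal is to show that one of $\beta,\gamma$ lies in $U(\Rbar[[x]])$. By Lemma~\ref{associated ps} I can pick $u,v\in U(\Rbar[[x]])$ with $u\beta,v\gamma\in R[[x]]$, so $(uv)\alpha=(u\beta)(v\gamma)\in R[[x]]$, and I will split into cases on whether the constant term $\alpha_0$ is relatively prime to $I$.

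In the case where $\alpha_0$ is relatively prime to $I$, I would first upgrade units using integrality. Since $\Rbar$ is a finite $R$-module, $\Rbar[[x]]$ is a finite (hence integral) extension of $R[[x]]$, so any element of $R$ or $R[[x]]$ that is a unit in the corresponding overring is already a unit below. Thus $\alpha_0+I\in U(\Rbar/I)$ forces $\alpha_0+I\in U(R/I)$, making $\alpha+I[[x]]$ a unit in $(R/I)[[x]]$. Reducing $(uv)\alpha\in R[[x]]$ modulo $I[[x]]$ and dividing by this unit yields $uv\in R[[x]]+I[[x]]=R[[x]]$, and integrality then gives $uv\in U(R[[x]])$. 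Now $(uv)\alpha$ is an $R[[x]]$-associate of the irreducible $\alpha$, so the factorization $(uv)\alpha=(u\beta)(v\gamma)$ in $R[[x]]$ forces one of $u\beta,v\gamma$ into $U(R[[x]])\subseteq U(\Rbar[[x]])$, and hence $\beta$ or $\gamma$ into $U(\Rbar[[x]])$.

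For the general case I would copy the ideal decomposition from Theorem~\ref{irred remain irred}: set $J_1=\beta_0\Rbar+I$, $J_2=\gamma_0\Rbar+IJ_1^{-1}$, and $J_3=I(J_1J_2)^{-1}$, which are pairwise coprime because $I$ is radical, and let $R_1=R+J_2J_3$, $R_2=R+J_1J_3$, $R_3=R+J_3$. By Theorems~\ref{int orders} and~\ref{inheritance} these are associated orders with radical conductors $J_2J_3$, $J_1J_3$, $J_3$. Since $\alpha_0\in J_1J_2$ is relatively prime to $J_3$, applying the previous case with $R_3$ in place of $R$ yields $uv\in U(R_3[[x]])$; Lemma~\ref{units factor ps} now writes $(uv)^{-1}=w_1w_2$ with $w_1\in U(R_1[[x]])$, $w_2\in U(R_2[[x]])$, so that $\alpha=(w_1u\beta)(w_2v\gamma)$ with $w_1u\beta\in R_1[[x]]$ and $w_2v\gamma\in R_2[[x]]$.

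The main obstacle, with no analogue in the order-level proof, is descending the two factors from $R_1[[x]]$ and $R_2[[x]]$ back into $R[[x]]$: Theorem~\ref{irred remain irred} uses $\beta\in J_1$ directly, but for a power series only $\beta_0\in J_1$ is available. My plan is a modular argument. Since $w_2v\gamma\in R_2[[x]]$ and $R_2+J_1=R+J_1$, reducing modulo $J_1[[x]]$ places all coefficients in $(R+J_1)/J_1$; its constant term $(w_2)_0v_0\gamma_0$ is a unit in $\Rbar/J_1$ (as $\gamma_0\in J_2$ is coprime to $J_1$ and $(w_2)_0,v_0\in U(\Rbar)$) and hence a unit in $(R+J_1)/J_1$ by integrality of $\Rbar/J_1$ over $(R+J_1)/J_1$. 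Therefore $w_2v\gamma$ is a unit in $((R+J_1)/J_1)[[x]]$, and the identity $\alpha=(w_1u\beta)(w_2v\gamma)$ with $\alpha\in R[[x]]$ yields $w_1u\beta\equiv\alpha(w_2v\gamma)^{-1}\pmod{J_1[[x]]}$, so $w_1u\beta\in(R+J_1)[[x]]$. Intersecting with $w_1u\beta\in R_1[[x]]=(R+J_2J_3)[[x]]$ and using both the coefficientwise identity $A[[x]]\cap B[[x]]=(A\cap B)[[x]]$ and Theorem~\ref{intersect orders} gives $w_1u\beta\in(R+(J_1\cap J_2J_3))[[x]]=(R+I)[[x]]=R[[x]]$; the symmetric argument modulo $J_2[[x]]$ gives $w_2v\gamma\in R[[x]]$. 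Then $\alpha=(w_1u\beta)(w_2v\gamma)$ is an honest factorization in $R[[x]]$, so irreducibility forces one factor into $U(R[[x]])$, its constant term into $U(R)\subseteq U(\Rbar)$, and hence the corresponding $\beta$ or $\gamma$ into $U(\Rbar[[x]])$.
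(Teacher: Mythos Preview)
Your overall strategy mirrors the paper's, and your first-case argument (when the constant term $\alpha_0$ is coprime to $I$, use that $\alpha$ is a unit in $(R/I)[[x]]$ together with integrality of $\Rbar[[x]]$ over $R[[x]]$ to force $uv\in U(R[[x]])$) is in fact slicker than the paper's coefficient-by-coefficient computation. The gap is in the descent step. You claim that the constant term of $w_2v\gamma$ is a unit modulo $J_1$, justifying this by ``$\gamma_0\in J_2$ is coprime to $J_1$''. But membership in an ideal coprime to $J_1$ does not make an element coprime to $J_1$; concretely, if $I=PQ$ with $P\neq Q$ prime, $\beta_0\in P\setminus Q$, and $\gamma_0\in P\cap Q$, then your definitions give $J_1=P$, $J_2=Q$, $J_3=\Rbar$, yet $\gamma_0\in P=J_1$. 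In that situation $w_2v\gamma$ reduces to a nonunit modulo $J_1[[x]]$ and you cannot invert it to pull $w_1u\beta$ into $(R+J_1)[[x]]$. (By contrast, your ``symmetric'' half does go through: every prime dividing $J_2$ also divides $IJ_1^{-1}$ and hence misses $\beta_0$, so $\beta_0$ really is coprime to $J_2$ and $w_2v\gamma\in R[[x]]$ follows. The asymmetry comes from building $J_1$ out of $\beta_0$ alone.)

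The paper sidesteps this by defining $J_1$ not from the constant term but from the whole series: $J_1$ is the product of those primes $P\mid I$ with $\beta\in P[[x]]$, and analogously $J_2$ for $\gamma$. With that choice $w_1u\beta$ automatically lies in $J_1[[x]]$, and the descent is the one-line inclusion $R_1[[x]]\cap J_1[[x]]\subseteq R[[x]]$, with no modular division needed. The price is that $\alpha_0$ need no longer be coprime to $J_3$ (both $\beta_0$ and $\gamma_0$ can lie in some $P\mid J_3$ even though neither full series does), so your clean base case no longer applies; the base case must instead handle the weaker hypothesis ``$\alpha\notin P[[x]]$'' for prime $P$, which is exactly why the paper resorts to the explicit coefficient recursion before invoking Lemma~\ref{units factor ps}.
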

    \begin{proof}
        Let $f$ be an irreducible power series in $R[[x]]$, and write $f=gh$ for $g,h\in\overline{R}[[x]]$. To show that $f$ remains irreducible in $\overline{R}[[x]]$, we want to show that either $g$ or $h$ is a unit. The previous theorem tells us that $R[[x]]$ is an associated subring of $\overline{R}[[x]]$, so there exist $u,v\in U(\overline{R}[[x]])$ such that $ug,vh\in R[[x]]$. We will use this notation throughout the rest of the proof.
        
        First, we will consider the case when $f\notin I[[x]]$ and $I$ is a prime $\overline{R}$-ideal. We write $uv=u_0+b_1x+b_2x^2+\dots\in U(\overline{R}[[x]])$, $f=r_0+r_1x+r_2x^2+\dots\in R[[x]]$, and $(uv)f=a_0+a_1x+a_2x^2+\dots\in R[[x]]$ (note that $(uv)f$ must lie in $R[[x]]$ as the product of $ug$ and $vh$, both of which lie in $R[[x]]$). Since $f\notin I[[x]]$, there is some minimal $n\in\bN_0$ such that $r_n\notin I$ (i.e. $r_i\in I$ for $0\leq i<n$). Then
        $$a_n=u_0r_n+\sum_{i=1}^nb_ir_{n-i}\implies u_0r_n=a_n-\sum_{i=1}^nb_ir_{n-i}.$$ Note that $a_n\in R$ and $r_{n-i}\in I$ for $1\leq i\leq n$, so the right-hand side of this equation lies in $R$. Furthermore, since $I$ is prime and $r_n\notin I$, $r_n+I\in U(\large\sfrac{R}{I})$, so $$u_0+I=\left(a_n-\sum_{i=1}^nb_ir_{n-i}+I\right)(r_n+I)^{-1}\in \large\sfrac{R}{I}.$$ Then $u_0\in U(R)$.
        
        Now for some $k>n$, we will assume that $u_0\in U(R)$ and $b_i\in R$ for $1\leq i<k-n$. Similarly to before, we write
        $$a_k=u_0r_k+\sum_{i=1}^{k-n-1}b_ir_{k-i}+b_{k-n}r_n+\sum_{i=k-n+1}^kb_ir_{k-i};$$
        rearranging, this gives $$b_{k-n}r_n=a_k-u_0r_k-\sum_{i=1}^{k-n-1}b_ir_{k-i}-\sum_{i=k-n+1}^kb_ir_{k-i}.$$
        On the right-hand side of this equation, note that $a_k\in R$, $u_0r_k\in R$, $b_ir_{k-i}\in R$ for $1\leq i\leq k-n-1$, and $r_{k-i}\in I$ for $i\geq k-n+1$. Then the right-hand side of this equation is in $R$, so we can proceed as before to multiply by the inverse of $r_n$ modulo $I$ to get $$b_{k-n}+I=\left(a_k-u_0r_k-\sum_{i=1}^{k-n-1}b_ir_{k-i}-\sum_{i=k-n+1}^kb_ir_{k-i}+I\right)(r_n+I)^{-1}\in \large\sfrac{R}{I}.$$ Then $b_i\in R$ for every $i\in \bN$, so $uv\in U(R[[x]])$. Then $uvf$ is an associate of $f$ in $R[[x]]$, and must thus remain irreducible in $R[[x]]$. Therefore, either $ug$ or $vh$ must be a unit in $R[[x]]$, meaning that either $g$ or $h$ must be a unit in $\overline{R}[[x]]$. Thus, $f$ remains irreducible in $\overline{R}[[x]]$.
        
        Now removing the assumption that $I$ is prime, we will split $I$ into relatively prime factors much as in the proof of Theorem \ref{irred remain irred}: let $J_1$ be the minimal divisor of $I$ such that $J_1[[x]]$ contains $g$; let $J_2$ be the minimal divisor of $IJ_1^{-1}$ such that $J_2[[x]]$ contains $h$; and let $J_3=I(J_1J_2)^{-1}$. As before, we will denote $R_1=R+IJ_1^{-1}$, $R_2=R+IJ_2^{-1}$, and $R_3=R+J_3$. Since $P[[x]]$ is a prime $\overline{R}[[x]]$-ideal for any prime ideal $P$, note that $\alpha$ is not contained in $P[[x]]$ for any prime divisor $P$ of $J_3$. By the previous case, this tells us that $uv\in (R+P)[[x]]$ for every prime divisor $P$ of $J_3$, and thus $uv\in \bigcap_{P|J_3}(R+P)[[x]]=R_3[[x]]$. Then since $uv\in U(R_3[[x]])$, we can use Theorem \ref{units factor ps} to conclude that there must exist $w_1\in U(R_1[[x]])$ and $w_2\in U(R_2[[x]])$ such that $(uv)^{-1}=w_1w_2$. Then $f=(uv)^{-1}(ug)(vh)=(w_1ug)(w_2vh)$, with $w_1ug\in R_1[[x]]\cap J_1[[x]]\subseteq R[[x]]$ and $w_2vh\in R_2[[x]]\cap J_2[[x]]\subseteq R[[x]]$. Then $f=(w_1ug)(w_2vh)$ is a factorization of $f$ in $R[[x]]$, so either $w_1ug$ or $w_2vh$ is a unit in $R[[x]]$. Then either $g$ or $h$ must be a unit in $\overline{R}[[x]]$, meaning that $f$ is irreducible in $\overline{R}[[x]]$.
    \end{proof}

    \begin{theorem}
        \label{elasticity equal ps}
        Let $R$ be an associated order in a number field $K$ with radical conductor ideal $I$. Then for any nonzero, nonunit $f\in R[[x]]$, $\rho_{R[[x]]}(f)=\rho_{\Rbar[[x]]}(f)$. Moreover, $\rho(R[[x]])=\rho(\Rbar[[x]])$.
    \end{theorem}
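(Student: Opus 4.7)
The plan is to mirror the proof of Theorem \ref{elasticity equal} step-for-step, replacing each key lemma about $R$ with its $R[[x]]$ counterpart already proved in this section. The easy inequality is immediate: by Theorem \ref{irred remain irred ps}, every irreducible in $R[[x]]$ remains irreducible in $\Rbar[[x]]$, so any irreducible factorization of a nonzero, nonunit $f \in R[[x]]$ is already an irreducible factorization in $\Rbar[[x]]$. This gives $\ell_{R[[x]]}(f) \subseteq \ell_{\Rbar[[x]]}(f)$ and therefore $\rho_{R[[x]]}(f) \leq \rho_{\Rbar[[x]]}(f)$; since $R[[x]] \subseteq \Rbar[[x]]$, we also obtain $\rho(R[[x]]) \leq \rho(\Rbar[[x]])$.

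For the reverse inequality I would start with an arbitrary irreducible factorization $f = \phi_1 \cdots \phi_k$ in $\Rbar[[x]]$. Lemma \ref{associated ps} supplies units $u_i \in U(\Rbar[[x]])$ with $u_i \phi_i \in R[[x]]$, and I would then set up the same ideal-decomposition as in Theorem \ref{elasticity equal}: let $J_1$ be the minimal divisor of $I$ such that $\phi_1 \in J_1[[x]]$; for $2 \leq i \leq k$ let $J_i$ be the minimal divisor of $I(J_1 \cdots J_{i-1})^{-1}$ with $\phi_i \in J_i[[x]]$; and set $J_{k+1} = I(J_1 \cdots J_k)^{-1}$. Since $I$ is radical, the $J_i$ are pairwise coprime. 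Writing $R_i = R + IJ_i^{-1}$ for $1 \leq i \leq k$ and $R_{k+1} = R + J_{k+1}$, the prime-case argument from the proof of Theorem \ref{irred remain irred ps}, applied to each prime divisor $P$ of $J_{k+1}$ (using that $f \notin P[[x]]$), yields $u_1 \cdots u_k \in U(R_{k+1}[[x]])$. Iterated use of Lemma \ref{units factor ps} then decomposes $(u_1 \cdots u_k)^{-1} = v_1 \cdots v_k$ with $v_i \in U(R_i[[x]])$.

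Assembling the pieces gives $f = (v_1 u_1 \phi_1) \cdots (v_k u_k \phi_k)$, where each factor lies in $R_i[[x]] \cap J_i[[x]] \subseteq R[[x]]$ by Theorem \ref{intersect orders}. Each $v_i u_i \phi_i$ is an $\Rbar[[x]]$-associate of the irreducible $\phi_i$, hence irreducible in $\Rbar[[x]]$; and since any $g \in R[[x]] \cap U(\Rbar[[x]])$ has constant term in $R \cap U(\Rbar) = U(R)$ (using the standard fact that an algebraic integer of $R$ with norm $\pm 1$ is invertible in $R$) and is therefore a unit in $R[[x]]$, the above factorization is actually irreducible in $R[[x]]$. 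Thus $\ell_{\Rbar[[x]]}(f) \subseteq \ell_{R[[x]]}(f)$, giving the reverse inequality $\rho_{\Rbar[[x]]}(f) \leq \rho_{R[[x]]}(f)$, and equality holds element-wise. For the global statement, given any nonzero, nonunit $g \in \Rbar[[x]]$, Lemma \ref{associated ps} produces $u \in U(\Rbar[[x]])$ with $ug \in R[[x]]$; since associates share elasticity, $\rho_{\Rbar[[x]]}(g) = \rho_{R[[x]]}(ug) \leq \rho(R[[x]])$, and taking the supremum yields $\rho(\Rbar[[x]]) = \rho(R[[x]])$.

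The main obstacle is the unit-decomposition step. Verifying $u_1 \cdots u_k \in U(R_{k+1}[[x]])$ requires carefully lifting the two-factor computation inside the proof of Theorem \ref{irred remain irred ps} to $k$ factors by running the prime case modulo each prime divisor of $J_{k+1}$ and then intersecting over primes. The subsequent iterated use of Lemma \ref{units factor ps} then requires a suitable regrouping of the pairwise-coprime $J_i$'s at each step, with bookkeeping to ensure each $v_i$ lands in the correct unit group $U(R_i[[x]])$.
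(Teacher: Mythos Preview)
Your proposal is correct and follows essentially the same approach as the paper's own proof: the same use of Theorem \ref{irred remain irred ps} for one inequality, the same ideal decomposition $J_1,\dots,J_{k+1}$, the same reduction showing $u_1\cdots u_k\in U(R_{k+1}[[x]])$ via the prime case of Theorem \ref{irred remain irred ps}, the same iterated application of Lemma \ref{units factor ps}, and the same closing argument via Lemma \ref{associated ps} for the global elasticity. The only minor remark is that your justification of $R\cap U(\Rbar)=U(R)$ via norms is not quite the right mechanism---the clean reason is that $\Rbar$ is integral over $R$---but the conclusion and its use are correct.
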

    \begin{proof}
        By the previous theorem, any irreducible in $R[[x]]$ remains irreducible in $\overline{R}[[x]]$. Then for any nonzero, nonunit $f\in R[[x]]$, any factorization of $f$ into $R[[x]]$-irreducibles is also a factorization of $f$ into $\overline{R}[[x]]$-irreducibles. Then $\ell_{R[[x]]}(f)\subseteq\ell_{\overline{R}[[x]]}(f)$, so $\rho_{R[[x]]}(f)\leq\rho_{\overline{R}[[x]]}(f)$ for every nonzero, nonunit $f\in R[[x]]$. Note that this also gives $\rho(R[[x]])\leq \rho(\overline{R}[[x]])$.
        
        Now let $f=g_1\dots g_k$ be a factorization of some nonzero, nonunit $f\in R[[x]]$ into irreducibles $g_i\in \Irr(\overline{R}[[x]])$. By Lemma \ref{associated ps}, we can find $u_1,\dots,u_k\in U(\overline{R}[[x]])$ such that $u_ig_i\in R[[x]]$ for every $1\leq i\leq k$. Now let $J_1$ be the minimal divisor of $I$ such that $J_1[[x]]$ contains $g_1$, and for $2\leq i\leq k$, let $J_i$ be the minimal divisor of $I(J_1\dots J_{i-1})^{-1}$ such that $J_i[[x]]$ contains $g_i$. Finally, let $J_{k+1}=I(J_1\dots J_k)^{-1}$ so that $J_1,\dots, J_{k+1}$ are pairwise relatively prime ideals such that $I=J_1\dots J_{k+1}$ and $g_i\in J_i[[x]]$ for $1\leq i\leq k$. For ease of notation, we will denote $R_{k+1}=R+J_{k+1}$ and $R_i=R+IJ_i^{-1}$ for $1\leq i\leq k$. Then as in the proof of the previous theorem, since $f$ is not contained in $P[[x]]$ for any prime $\overline{R}$-ideal $P$ dividing $J_{k+1}$, $u_1\dots u_k\in U(R_{k+1}[[x]])$.
        
        By repeatedly applying Theorem \ref{units factor ps}, we have that $$U(R_{k+1}[[x]])=U(R_1[[x]])\dots U(R_k[[x]]).$$
        Then there exist $v_i\in U(R_i[[x]])$ for $1\leq i\leq k$ such that $(u_1\dots u_k)^{-1}=v_1\dots v_k$. Thus, $f=(v_1u_1g_1)\dots (v_ku_kg_k)$. Note that each $v_iu_ig_i\in R_i[[x]]\cap J_i[[x]]\subseteq R[[x]]$; furthermore, since each $v_iu_ig_i$ is an associate of $g_i\in \Irr(\overline{R}[[x]])$, each of these elements is irreducible in $\overline{R}[[x]]$ (and thus also irreducible in $R[[x]]$). Then the irreducible factorization $f=g_1\dots g_k$ in $\overline{R}[[x]]$ gave rise to an irreducible factorization $f=(v_1u_1g_1)\dots(v_ku_kg_k)$ in $R[[x]]$ of the same length. Then $\ell_{\overline{R}[[x]]}(f)\subseteq \ell_{R[[x]]}(f)$, so $\rho_{\overline{R}[[x]]}(f)\leq \rho_{R[[x]]}(f)$ for every nonzero, nonunit $f\in R[[x]]$. Thus, $\rho_{\overline{R}[[x]]}(f)=\rho_{R[[x]]}(f)$.
        
        All that remains to show is that $\rho(R[[x]])=\rho(\overline{R}[[x]])$. We already know that $\rho(R[[x]])\leq \rho(\overline{R}[[x]])$; furthermore, for any nonzero, nonunit $f\in R[[x]]$, we know that $\rho_{\overline{R}[[x]]}(f)=\rho_{R[[x]]}(f)$. Then if we can show that for any nonzero, nonunit $f\in \overline{R}[[x]]$, there exists $g\in R[[x]]$ such that $\rho_{\overline{R}[[x]]}(f)=\rho_{R[[x]]}(g)$, this would complete the proof. To do so, recall that by Lemma \ref{associated ps}, there exists $u\in U(\overline{R}[[x]])$ such that $uf\in R[[x]]$. Then since $f$ and $uf$ are associates in $\overline{R}[[x]]$, we know that $\rho_{\overline{R}[[x]]}(uf)=\rho_{\overline{R}[[x]]}(f)$. Furthermore, the previous argument tells us that since $uf\in R[[x]]$, $\rho_{R[[x]]}(uf)=\rho_{\overline{R}[[x]]}(uf)$. Then for any nonzero, nonunit $f\in \overline{R}[[x]]$, there is some $g\in R[[x]]$ (in this construction, $g=uf$) such that $\rho_{\overline{R}[[x]]}(f)=\rho_{R[[x]]}(g)$. Then $\rho(\overline{R}[[x]])\leq\rho(R[[x]])$, so $\rho(\overline{R}[[x]])=\rho(R[[x]])$.
    \end{proof}

    Using Theorems \ref{elasticity equal} and \ref{elasticity equal ps}, we can compare the elasticity of an order $R$ to that of its integral closure as well as the elasticity of $R[[x]]$ to that of $\Rbar[[x]]$. In cases when a relationship is known to exist between $\rho(\Rbar)$ and $\rho(\Rbar[[x]])$, we will have a relationship among all four of these elasticities. In particular, as we will see in the next section, we will be able to conclude that if $R$ is an associated order with radical conductor ideal, then $R$ will be an HFD if and only if $R[[x]]$ is an HFD.

    \section{Half-Factorial Orders}
    One will recall that a half-factorial domain is characterized by being an atomic domain with elasticity 1. Then as one might expect, the previous results will apply nicely to the case when the order $R$ in question is an HFD. Before seeing these applications, it will help to first consider the recent characterization of half-factorial orders in an algebraic number field from \cite{rago}. For convenience, we present this result using the notation and terminology found in this paper rather than the original.

    \begin{theorem}
        \label{rago hfd}
        \cite{rago} Let $K$ be a number field and $R$ an order in $K$ with conductor ideal $I$. Let $I=P_1^{a_1}\dots P_k^{a_k}$ be the factorization of $I$ into prime $\Rbar$-ideals, and denote $Q_i=R\cap P_i$ for each $1\leq i\leq k$. Then $R$ is an HFD if and only if the following properties hold:
        \begin{enumerate}
            \item $\Rbar$ is an HFD;
            \item $R$ is an associated order;
            \item For each $1\leq i\leq k$, $a_i\leq 4$, and letting $\pi_i$ be an arbitrary prime element of $\Rbar_{Q_i}$, $v_{\pi_i}(\Irr(R_{Q_i}))\subseteq \{1,2\}$. If $P_i$ is a principal ideal, then $a_i\leq 2$ and $v_{\pi_i}(\Irr(R_{Q_i})=\{1\}$.
        \end{enumerate}
        Here, $R_{Q_i}$ refers to the localization of $R$ at the prime ideal $Q_i$ (with $\Rbar_{Q_i}$ analogously defined) and $v_{\pi_i}$ is the valuation associated with the element $\pi_i$.
    \end{theorem}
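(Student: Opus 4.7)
Since the statement is a precise local--global characterization, the plan is to handle the two directions separately and isolate the conductor primes as the technical heart.

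For the forward direction, suppose $R$ is an HFD, so $\rho(R)=1$. By Theorem \ref{elasticity grows} one obtains $\rho(\Rbar)\le\rho(R)=1$, and since $\Rbar$ is atomic (Dedekind) this yields $\rho(\Rbar)=1$, giving condition (1). The Corollary after Theorem \ref{elasticity grows} then forces $\Cl(\Rbar)\cong\Cl(R)$, i.e.\ $R$ is locally associated. To upgrade locally associated to associated (condition (2)), I would argue at each conductor prime: if some element of $\Rbar$ failed to be of the form $r\cdot u$ with $r\in R$ and $u\in U(\Rbar)$, one could construct two $R$-factorizations of different lengths for a product involving that element, contradicting $\rho(R)=1$. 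For condition (3), I would pass to the localization $R_{Q_i}$, which is a one-dimensional local domain with maximal ideal $Q_iR_{Q_i}$ and integral closure $\Rbar_{Q_i}$ a DVR with uniformizer $\pi_i$. A direct combinatorial count on lengths of $\pi_i^{a_i}$-factorizations in $R_{Q_i}$, comparing against lengths in $\Rbar_{Q_i}$, forces both $a_i\le 4$ (or $a_i\le 2$ in the principal case) and $v_{\pi_i}(\Irr(R_{Q_i}))\subseteq\{1,2\}$; otherwise one writes down an explicit element admitting two irreducible factorizations of unequal length.

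For the reverse direction, assume (1)--(3). Given two irreducible factorizations $r=\pi_1\cdots\pi_m=\sigma_1\cdots\sigma_n$ in $R$, the plan is to lift each to an irreducible factorization of $r$ in $\Rbar$ of a controlled length, then invoke $\rho(\Rbar)=1$ to force $m=n$. The lifting proceeds in two stages. First, using condition (2) and the unit-decomposition machinery of Lemma \ref{units factor}, one reduces to the local question at each $Q_i$: write $r=r_{\mathrm{cop}}\cdot \prod_i r_i$ with $r_{\mathrm{cop}}$ coprime to $I$ and $r_i$ supported at $Q_i$. The coprime part factors uniquely up to length in $R$ vs.\ $\Rbar$ (using ideal-preservation, as in the proof of Theorem \ref{irred remain irred}). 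Second, the local factor $r_i\in R_{Q_i}$ is analyzed using (3): since $v_{\pi_i}$ takes only values $1$ or $2$ on $R_{Q_i}$-irreducibles and $a_i\le 4$, any two irreducible factorizations of $r_i$ have the same number of factors, which is a short case-check on the multiset of $v_{\pi_i}$-values summing to $v_{\pi_i}(r_i)$.

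The main obstacle is the non-radical conductor case: Theorem \ref{elasticity equal} and the associated machinery of sections 2--3 are tailored to the case where $I$ splits into pairwise coprime prime ideals, so the clean ``partition'' arguments used there do not directly apply when some $a_i\ge 2$. Condition (3) is exactly the replacement: the restrictions $a_i\le 4$ and $v_{\pi_i}(\Irr(R_{Q_i}))\subseteq\{1,2\}$ are the minimal data needed to preserve factorization length across the non-radical piece, and verifying that these bounds are both necessary and sufficient is the core technical step. I would expect to spend most of the effort proving the reverse implication of (3), since one must enumerate how an $R_{Q_i}$-irreducible of $v_{\pi_i}$-value $2$ can split in $\Rbar_{Q_i}$ (into two elements of value $1$) and show that all such splittings are length-neutral once assembled back globally.
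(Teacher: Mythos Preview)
The paper does not prove this theorem at all: it is stated with the citation \cite{rago} and no proof environment follows. The result is imported from Rago's paper and used as a black box (notably in Theorem~\ref{hfd iff irreducibles} and in the Corollary following Theorem~\ref{elasticity grows}). So there is no ``paper's own proof'' to compare your proposal against.

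That said, your sketch has a genuine structural gap worth flagging. In the forward direction you invoke the Corollary after Theorem~\ref{elasticity grows} to conclude that $R$ is locally associated; but the proof of that Corollary in the paper explicitly calls Theorem~\ref{rago hfd} in the case $\lvert\Cl(\Rbar)\rvert=1$. Using it here is circular. You would need an independent argument that an HFD order is associated, and your suggestion (``if some element of $\Rbar$ failed to be of the form $r\cdot u$ \ldots\ one could construct two $R$-factorizations of different lengths'') is only a gesture, not an argument: producing such factorizations is exactly the hard content of Rago's result. Similarly, in the reverse direction you appeal to Lemma~\ref{units factor}, whose hypotheses require the $J_i$ to be pairwise coprime; with $a_i\ge 2$ this is precisely what fails, and you acknowledge as much in your final paragraph without supplying the replacement. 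What remains is an outline of where the work lies rather than a proof.
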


    This result gives a full characterization of which orders in an algebraic number field are HFDs. However, it may at times be inconvenient to use. For our purposes in this paper, we will instead use the following characterization which follows from the above characterization and a lemma also found in \cite{rago}.

    \begin{theorem}
        \label{hfd iff irreducibles}
        Let $R$ be an order in a number field $K$. Then $R$ is an HFD if and only if the following properties hold.
        \begin{enumerate}
            \item $\Rbar$ is an HFD;
            \item $R$ is an associated order;
            \item $\Irr(R)\subseteq \Irr(\Rbar)$; that is, any element which is irreducible in $R$ remains irreducible when considered as an element of $\Rbar$.
        \end{enumerate}
    \end{theorem}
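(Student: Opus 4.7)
The plan is to split into the two implications and handle the reverse direction first, since it is essentially bookkeeping once condition (3) is in hand. Assuming (1), (2), and (3), I would first invoke the Noetherianness of $R$ (as a finitely generated $\bZ$-module it is a Noetherian ring) to conclude that $R$ is atomic. Given any two irreducible factorizations $\alpha = \pi_1 \cdots \pi_n = \sigma_1 \cdots \sigma_m$ of a nonzero nonunit $\alpha \in R$, condition (3) promotes both to factorizations of $\alpha$ into $\Rbar$-irreducibles, and since $\Rbar$ is an HFD by (1), these lengths must agree, so $n = m$ and $R$ is an HFD.

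For the forward direction I would assume $R$ is an HFD. Theorem \ref{rago hfd} applied to $R$ delivers (1) and (2) directly, so the content is condition (3). I would argue by contradiction: suppose $\pi \in \Irr(R)$ but $\pi = \alpha\beta$ in $\Rbar$ with $\alpha, \beta \notin U(\Rbar)$. Since $R$ is associated by (2), choose $u, v \in U(\Rbar)$ with $u\alpha, v\beta \in R$; both lie outside $U(R)$ because they lie outside $U(\Rbar)$. The exact sequence of Lemma \ref{exact sequence} forces $\quot{U(\Rbar)}{U(R)}$ to be finite, and taking $m$ to be its order gives $(uv)^m \in U(R)$. The identity
$$(u\alpha)^m (v\beta)^m = (uv)^m \pi^m$$
then writes an associate of $\pi^m$ in $R$ in two ways: the right-hand side has factorization length $m$ into $R$-irreducibles (being $\pi$ repeated $m$ times up to a unit), while the left-hand side is a product of $2m$ nonunits of $R$ and so, by atomicity, has factorization length at least $2m$ into $R$-irreducibles. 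The HFD hypothesis forces $m \geq 2m$, which is absurd.

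The main obstacle is assembling condition (3) in the forward direction: neither the associated property nor the HFD property of $R$ alone suffices, and the clean bridge between them is the finiteness of $\quot{U(\Rbar)}{U(R)}$. Once one recognizes that this finiteness allows the ``nuisance unit'' $uv$ to be absorbed into a sufficiently high power of $\pi$, the length count closes the argument in a single line; the rest reduces to an invocation of Theorem \ref{rago hfd} together with the atomicity of orders.
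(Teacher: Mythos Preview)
Your argument is correct. The reverse direction is straightforward bookkeeping, as you say, and your forward-direction argument for condition (3) is clean: absorbing the stray unit $uv$ by raising to the order $m$ of $U(\Rbar)/U(R)$ is exactly the right move, and the length count $m \geq 2m$ gives the contradiction immediately. One could note that condition~(2) is not actually needed in your reverse direction (atomicity plus (1) and (3) already force equal lengths), but that does not affect the validity of the implication as stated.

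By way of comparison, the paper does not write out its own proof of this theorem at all; it simply asserts that the statement ``follows from the above characterization and a lemma also found in \cite{rago}''. In other words, the paper's route is to quote Rago's full classification (Theorem~\ref{rago hfd}) together with an auxiliary lemma of Rago that presumably translates the local valuation condition $v_{\pi_i}(\Irr(R_{Q_i})) \subseteq \{1,2\}$ (and its sharpening when $P_i$ is principal) into the global statement $\Irr(R) \subseteq \Irr(\Rbar)$. Your argument is genuinely different in that it does not touch the third condition of Rago's theorem at all: you extract (1) and (2) from Rago and then prove (3) from scratch using only the associated property, the finiteness of $U(\Rbar)/U(R)$, and the HFD hypothesis on $R$. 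This is more elementary and self-contained---it avoids any localization or valuation machinery---at the cost of not revealing the finer local structure that Rago's approach exposes.
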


    Notably, these results tell us that any half-factorial order in a number field must automatically be an associated order. Then in order to apply the elasticity results in Theorems \ref{elasticity equal} and \ref{elasticity equal ps}, we only need to show that the conductor ideal is radical.

    Next, we show a result that will help us greatly when considering the elasticity of the ring of formal power series over an order in a number field. One will note that our previous results relate the elasticity of $R$ to that of $\Rbar$ and the elasticity of $R[[x]]$ to that of $\Rbar[[x]]$; this will allow us to bridge the gap between these pairs of rings. Before proving the theorem of interest, we must first reference a series of lemmas. In the following results, we will use $\Cl(R)$ to denote the ideal class group of a ring $R$ and $\DivCl(R)$ to denote the divisor class group of a ring $R$. For precise definitions and discussion of these two groups, see \cite{fossum}.

    \begin{lemma}
        \label{noetherian krull regular ps}
        \cite{matsumura}
        Let $R$ be a ring. Then:
        \begin{enumerate}
            \item If $R$ is a Noetherian ring, so is $R[[x]]$.
            \item If $R$ is a Krull domain, so is $R[[x]]$.
            \item If $R$ is a regular ring, so is $R[[x]]$.
        \end{enumerate}
    \end{lemma}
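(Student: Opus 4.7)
The plan is to prove the three assertions separately, recognizing that they are classical results from commutative algebra (as attested by the citation to Matsumura) and sketching the standard arguments.

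For part (1), I would adapt the Hilbert basis theorem to the power series setting. Given an ideal $\mathfrak{A}\subseteq R[[x]]$, for each $n\geq 0$ I would form the ideal $J_n\subseteq R$ consisting of $0$ together with the coefficients of $x^n$ arising as the lowest-order nonzero term of some $f\in\mathfrak{A}$ of order exactly $n$. The ascending chain $J_0\subseteq J_1\subseteq\cdots$ must stabilize at some $J_N$ because $R$ is Noetherian, and each $J_n$ for $n\leq N$ is finitely generated. Lifting a finite set of generators for $J_0,\dots,J_N$ to elements of $\mathfrak{A}$ and subtracting appropriate $R[[x]]$-multiples from an arbitrary $g\in\mathfrak{A}$, one can kill successively higher-order terms; the crucial point (which distinguishes this from the polynomial case) is that one must assemble the resulting correction series using the $(x)$-adic completeness of $R[[x]]$ to ensure convergence to an element of the ideal generated by the chosen lifts.

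For part (2), the proof verifies the three defining properties of a Krull domain: that $R[[x]]_{\mathfrak{P}}$ is a DVR for every height-one prime $\mathfrak{P}$, that $R[[x]]$ equals the intersection of these localizations, and that each nonzero element lies in only finitely many height-one primes. The central difficulty, which I expect to be the main obstacle, is the analysis of the height-one primes of $R[[x]]$: they come in two essentially different flavors -- those lying over a height-one prime of $R$ (handled by passing to the local ring and invoking that $R[[x]]$ is flat over $R$) and those containing $x$ (for which one exploits that $R[[x]]/(x)\cong R$ and uses the $(x)$-adic valuation directly). Once these are classified, the intersection property follows from the corresponding property in $R$ together with a careful coefficient-by-coefficient argument, and the finiteness property follows by reducing to $R$.

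For part (3), since regularity is a local property, I would fix a prime $\mathfrak{P}\subseteq R[[x]]$ and show $R[[x]]_{\mathfrak{P}}$ is a regular local ring. Let $\mathfrak{p}=\mathfrak{P}\cap R$; the fiber over $\mathfrak{p}$ is controlled by $\kappa(\mathfrak{p})[[x]]$ (viewed appropriately), and one combines the regularity of $R_{\mathfrak{p}}$ with the fact that $R[[x]]$ is faithfully flat over $R$ and that formal power series over a field are regular. The standard invocation here is that completions of Noetherian regular local rings are regular, applied after localizing and using the identification of $R[[x]]_{(\mathfrak{p},x)}$ with an appropriate localization of the completion of $R[x]_{(\mathfrak{p},x)}$ in the $x$-adic topology. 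Among the three parts, (2) is the most delicate because it requires an honest classification of the height-one primes of $R[[x]]$, whereas (1) and (3) reduce relatively cleanly to more standard facts.
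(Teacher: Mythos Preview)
The paper does not give its own proof of this lemma: it is stated with a citation to Matsumura and no argument is supplied. So there is no ``paper's proof'' to compare your proposal against; the authors are simply importing a well-known result from the literature.

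That said, your sketches for (1) and (3) are the standard arguments and are fine as outlines. Your sketch for (2) is shakier. You propose to classify the height-one primes of $R[[x]]$ into ``those lying over a height-one prime of $R$'' and ``those containing $x$,'' but for a general (non-Noetherian) Krull domain this dichotomy is not so clean, and the usual proofs do not proceed this way. The standard route (as in Bourbaki or Matsumura) is either to show directly that $R[[x]]$ is completely integrally closed and satisfies the ascending chain condition on divisorial ideals, or to write $R=\bigcap V_i$ as an intersection of DVRs with finite character and argue via the rings $V_i[[x]]$. If you intend to flesh this out, I would recommend following one of those lines rather than attempting a direct classification of height-one primes, which is the genuine weak point in your plan.
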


    \begin{lemma}
        \label{class numbers equal}
        \cite{gortz}
        Let $R$ be a locally factorial ring. Then $\Cl(R)\cong \DivCl(R)$. In particular, this holds when $R$ is a regular ring.
    \end{lemma}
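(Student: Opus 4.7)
The plan is to exhibit a natural homomorphism between $\Cl(R)$ and $\DivCl(R)$ and show under local factoriality that it is an isomorphism. Recall that $\Cl(R)$ here is built from invertible (equivalently, Cartier) fractional ideals modulo principal ones, while $\DivCl(R)$ is built from formal $\bZ$-linear combinations of height-one primes (Weil divisors) modulo principal divisors. For each invertible fractional ideal $J$, define $\varphi(J) = \sum_{P} v_P(J)\,P$, summed over height-one primes $P$, where $v_P$ is the valuation of the height-one localization $R_P$. One first checks this sum is finite and that $\varphi$ descends to a well-defined homomorphism $\varphi:\Cl(R)\to \DivCl(R)$, sending principal ideals to principal divisors.

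For injectivity, suppose $J$ is an invertible fractional ideal whose associated Weil divisor equals $\textup{div}(f)$ for some $f\in \Frac(R)$. Replacing $J$ by $f^{-1}J$, we may assume $v_P(J)=0$ for every height-one prime $P$. Since $R$ is locally factorial, each localization $R_P$ at a height-one prime is a DVR, so $J R_P = R_P$ for every such $P$. For a locally factorial (hence normal Noetherian-type) domain one has the Krull intersection $R = \bigcap_{\height P = 1} R_P$ inside the fraction field, which forces $J = R$, giving injectivity.

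For surjectivity, let $D = \sum n_P P$ be a Weil divisor. The key observation is that in a UFD every height-one prime is principal; hence for any prime $Q$ of $R$ the localization $R_Q$ is a UFD, and the extension $P R_Q$ of any height-one $P\subseteq Q$ is principal. This means the height-one prime $P$ itself is a Cartier (invertible) fractional ideal of $R$, since it becomes principal after localization at every prime. Therefore the formal product $\prod_P P^{n_P}$ makes sense as an invertible fractional ideal whose image under $\varphi$ is exactly $D$. Combining both parts yields $\Cl(R)\cong \DivCl(R)$.

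For the ``in particular'' clause, by the Auslander--Buchsbaum theorem every regular local ring is a UFD; so if $R$ is a regular ring, each localization $R_Q$ is a UFD, i.e.\ $R$ is locally factorial, and the first part applies. The main obstacle is the injectivity argument, since it rests on the subtle fact that a fractional ideal with trivial valuation at every height-one prime must equal $R$; this Krull intersection property is automatic for Krull domains but must be justified in the locally factorial Noetherian setting before the rest of the argument goes through.
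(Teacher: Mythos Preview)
The paper does not prove this lemma; it is quoted verbatim from \cite{gortz} and used as a black box, so there is no in-paper argument to compare against. Your outline is the standard one and is essentially correct once one reads ``locally factorial'' in its usual sense (a Noetherian integral domain all of whose local rings are UFDs), which in particular makes $R$ normal and hence a Krull domain.

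One remark on the gap you flag in the injectivity step: you can sidestep the Krull-intersection issue entirely by localizing at an \emph{arbitrary} prime rather than only at height-one primes. If $J$ is invertible with $v_P(J)=0$ for all height-one $P$, then at any prime $Q$ the localization $J_Q$ is principal, say $J_Q=fR_Q$; for each height-one prime $P\subseteq Q$ one has $v_P(f)=0$, and since $R_Q$ is a UFD this forces $f\in U(R_Q)$. Thus $J_Q=R_Q$ for every prime $Q$, whence $J=R$. This uses local factoriality directly and avoids appealing to $R=\bigcap_{\height P=1}R_P$. With that adjustment the argument is clean, and the ``in particular'' clause via Auslander--Buchsbaum is exactly right.
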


    \begin{lemma}
        \label{divcl ps bijection}
        \cite{claborn}
        Let $R$ be a regular Noetherian domain. Then the natural mapping $\DivCl(R)\to \DivCl(R[[x]])$ is a bijection.
    \end{lemma}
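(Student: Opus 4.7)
The plan is to reduce the statement to a bijection of Picard groups and then exploit the completeness of $R[[x]]$ at the ideal $(x)$. First I would use the lemmas already in hand: since $R$ is regular Noetherian, Lemma \ref{noetherian krull regular ps} gives that $R[[x]]$ is also regular Noetherian, so both rings are locally factorial. Lemma \ref{class numbers equal} then identifies $\DivCl(R) \cong \Pic(R)$ and $\DivCl(R[[x]]) \cong \Pic(R[[x]])$, and under these identifications the natural map sending a height-one prime $\mathfrak{p}$ to $\mathfrak{p} R[[x]]$ corresponds to the map $\Pic(R)\to\Pic(R[[x]])$ induced by base change along the flat inclusion $R \hookrightarrow R[[x]]$. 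It therefore suffices to show this base-change map on Picard groups is bijective.

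Injectivity is essentially formal: the augmentation $\varepsilon : R[[x]] \to R$ defined by $x \mapsto 0$ splits the inclusion $R \hookrightarrow R[[x]]$, so the composition $\Pic(R)\to\Pic(R[[x]])\to\Pic(R)$ is the identity. For surjectivity, the essential observation is that $x$ lies in the Jacobson radical of $R[[x]]$: if $\mathfrak{M}$ were a maximal ideal not containing $x$, then $\mathfrak{M}+(x)=R[[x]]$ would give $m + xf = 1$ for some $m \in \mathfrak{M}$, but then $m = 1 - xf$ would have constant term $1$ and hence be a unit, contradicting the maximality of $\mathfrak{M}$. Thus $R[[x]]$ is a Noetherian ring that is complete with respect to an ideal contained in its Jacobson radical, and the standard lifting theorem for finitely generated projective modules over such rings asserts that any such $L$ over $R[[x]]$ is isomorphic to $(L/xL)\otimes_R R[[x]]$. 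Specializing to rank-one projectives yields surjectivity.

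The hard part is the surjectivity step. The reduction to Picard groups and the injectivity argument are formal consequences of the earlier lemmas and the retraction; the real content is absorbed into the lifting theorem for projective modules, which itself is classical but not entirely elementary. An alternative, more hands-on route would try to classify height-one primes of $R[[x]]$ directly, showing that those containing $x$ are generated by $x$ and that any other height-one prime $\mathfrak{P}$ has the form $(\mathfrak{P}\cap R)R[[x]]$; but this second step requires essentially the same completeness machinery, so there seems to be no way to avoid appealing to some variant of the lifting theorem as the main technical input.
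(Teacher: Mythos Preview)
The paper itself offers no proof of this lemma; it is quoted from \cite{claborn} and used as a black box in the proof of Theorem~\ref{integrally closed hfd iff ps}. There is therefore nothing in the paper's exposition to compare your argument against---you have supplied a proof where the paper provides only a citation.

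That said, your proposal is a correct and essentially complete argument. The identification of $\DivCl$ with $\Pic$ via local factoriality (Lemma~\ref{class numbers equal}), the injectivity coming from the retraction $\varepsilon:R[[x]]\to R$, and the surjectivity obtained by lifting finitely generated projective modules over a Noetherian ring that is $(x)$-adically complete with $(x)$ contained in its Jacobson radical are all valid steps. Your verification that $x$ lies in the Jacobson radical is the standard one, and $R[[x]]=\varprojlim R[[x]]/(x)^n$ gives the required completeness. As you note, the only non-elementary input is the lifting theorem for projectives, which is classical (it follows, for instance, from Nakayama's lemma together with lifting of idempotent matrices modulo an ideal contained in the Jacobson radical). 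The alternative route you sketch at the end---classifying height-one primes of $R[[x]]$ directly---is closer in spirit to how such results are sometimes proved in the Krull-domain literature, but you are right that it ultimately leans on the same completeness phenomenon.
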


    \begin{lemma}
        \label{krull ufd iff}
        \cite{fossum}
        Let $R$ be an integral domain. Then $R$ is a Krull domain with $\abs{\DivCl(R)}=1$ if and only if $R$ is a UFD.
    \end{lemma}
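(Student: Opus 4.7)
The plan is to prove the two implications separately, relying on the standard fact that in a Krull domain an element is determined up to units by its family of valuations at the height-one primes.

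For the forward direction, I would first recall that every UFD is a Krull domain: its height-one primes are precisely the ideals $(p)$ generated by prime elements, each localization $R_{(p)}$ is a DVR, and the intersection $\bigcap_{\height P = 1} R_P$ recovers $R$ because any element outside $R$ would have negative valuation at some such $(p)$. In a UFD every height-one prime is principal, so every divisorial ideal, being a $\bZ$-linear combination of height-one primes in the divisor group, corresponds to a principal fractional ideal. Therefore $\DivCl(R)$ is trivial.

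For the reverse direction, suppose $R$ is a Krull domain with $\abs{\DivCl(R)} = 1$. Then every divisorial ideal is principal; in particular, each height-one prime $P$ has the form $P = \pi_P R$ for some $\pi_P \in R$, and $\pi_P$ is a prime element because $P$ is a prime ideal. Given any nonzero nonunit $x \in R$, the Krull property guarantees that $v_P(x) = 0$ for all but finitely many height-one primes $P_1, \dots, P_n$, and we set $n_i := v_{P_i}(x) > 0$. Then $x$ and $y := \prod_{i=1}^n \pi_{P_i}^{n_i}$ have identical valuations at every height-one prime, so both $x/y$ and $y/x$ lie in $\bigcap_P R_P = R$, giving $x = u y$ for some $u \in U(R)$. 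This exhibits $x$ as a unit times a product of prime elements, so $R$ is a UFD.

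The main obstacle, and the step that deserves the most care, is the cancellation assertion that $x$ and $y$ differ by a unit when their valuations agree at every height-one prime. This is essentially the injectivity of the natural map from $R\setminus\{0\}$ modulo $U(R)$ into the free abelian group on height-one primes, and it follows from the defining intersection $R = \bigcap_{\height P = 1} R_P$ of a Krull domain together with the observation that an element of $R$ is a unit if and only if its valuation at every height-one prime vanishes.
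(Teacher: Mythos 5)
The paper does not prove this lemma; it is quoted from Fossum's book, where it is established by exactly the argument you give (height-one primes of a UFD are principal, so the divisor group is generated by principal divisors; conversely, triviality of $\DivCl(R)$ makes each height-one prime principal, and comparing valuations at height-one primes via $R=\bigcap_{\height P=1}R_P$ shows every nonzero nonunit is a unit times a product of prime elements). Your proposal is correct and is the standard proof of this classical fact.
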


    \begin{lemma}
        \label{class number 2}
        \cite{zaks1}
        Let $R$ be a Krull domain. If $\abs{\Cl(R)}=2$, then $R$ is an HFD. 
    \end{lemma}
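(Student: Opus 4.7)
The strategy is to invoke the standard correspondence between irreducibles of a Krull domain and minimal zero-sum sequences in its divisor class group. First, since $R$ is Krull it satisfies the ascending chain condition on principal ideals and is therefore atomic, so the HFD property is meaningful to check. For each nonzero nonunit $a\in R$, write $(a)=P_1^{n_1}\cdots P_r^{n_r}$ for the unique factorization of the principal divisor $(a)$ into height-one prime divisors. Reading off the associated class sequence $S(a)$, namely $[P_1]$ repeated $n_1$ times, $[P_2]$ repeated $n_2$ times, and so on, gives a zero-sum sequence in $\Cl(R)$ because $(a)$ is principal. Moreover $a$ is irreducible in $R$ exactly when $S(a)$ is a \emph{minimal} zero-sum sequence, since any proper zero-summing subsequence of $S(a)$ would correspond to a proper principal sub-divisor of $(a)$, yielding a nontrivial factorization of $a$.

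Next, I would enumerate the minimal zero-sum sequences in $\Cl(R)\cong\bZ/2\bZ$: writing $g$ for the nontrivial element, the only minimal zero-sum sequences are the singleton $(0)$ and the pair $(g,g)$. Consequently, every irreducible $a\in R$ falls into one of two disjoint types: type (i), in which $(a)=P$ for a single height-one prime $P$ with $[P]=0$; or type (ii), in which $(a)=P_1P_2$ for (not necessarily distinct) height-one primes $P_1,P_2$ with $[P_1]=[P_2]=g$.

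For the final step, fix any nonzero nonunit $a\in R$ and define $p$ (respectively $q$) to be the total exponent of height-one primes in the principal (respectively nontrivial) class appearing in the divisor factorization of $(a)$. Both $p$ and $q$ are invariants of $a$, not of any particular factorization. Given any factorization $a=a_1\cdots a_k$ into irreducibles, each type-(i) factor contributes exactly $1$ to $p$ and $0$ to $q$, while each type-(ii) factor contributes $0$ to $p$ and exactly $2$ to $q$. Therefore the number of type-(i) factors equals $p$ and the number of type-(ii) factors equals $q/2$, so $k=p+q/2$ independently of the chosen factorization. This is precisely the HFD condition.

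The main obstacle is not the closing combinatorial count, which is almost automatic once the class group has order $2$, but the rigorous setup of the divisor-theoretic dictionary: unique factorization of divisors into height-one primes, the identification of $\Cl(R)$ with the quotient of the divisor monoid by principal divisors, and the passage from a divisor factorization of $(a)$ back to an element factorization of $a$ (up to units). Once this machinery is in place, the proof collapses to the two-line enumeration of minimal zero-sum sequences in $\bZ/2\bZ$ and the subsequent bookkeeping.
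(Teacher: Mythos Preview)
The paper does not supply its own proof of this lemma; it is quoted verbatim from \cite{zaks1} and used as a black box in the proof of Theorem \ref{integrally closed hfd iff ps}. So there is no in-paper argument to compare against.

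Your proposal is correct and is essentially the standard argument underlying Zaks's theorem, phrased in the modern block-monoid language: pass from irreducibles of a Krull domain to minimal zero-sum sequences in $\Cl(R)$, enumerate those sequences when $\abs{\Cl(R)}=2$, and read off that every factorization length is the invariant $p+q/2$. The one place to be careful, which you already flag, is the backward direction of the dictionary: given a principal sub-divisor $(b)$ of $(a)$ you need $b\mid a$ in $R$, and this is exactly where the Krull hypothesis enters via $R=\bigcap_{P}R_P$ over height-one primes. Once that is nailed down the rest is, as you say, bookkeeping in $\bZ/2\bZ$.
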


    \begin{theorem}
        \label{integrally closed hfd iff ps}
        Let $K$ be a number field and $R$ its ring of algebraic integers, i.e. $R$ is the maximal order in the number field $K$. The following are equivalent:
        \begin{enumerate}
            \item $R$ is an HFD.
            \item $R[[x_1,\dots,x_k]]$ is an HFD for some $k\in\bN$.
            \item $R[[x_1,\dots,x_k]]$ is an HFD for every $k\in\bN$.
        \end{enumerate}
    \end{theorem}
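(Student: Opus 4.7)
The plan is to prove the cycle $(3) \Rightarrow (2) \Rightarrow (1) \Rightarrow (3)$. The first implication is trivial: take any $k \in \bN$. For $(2) \Rightarrow (1)$, I would argue by contrapositive. The key elementary observation is that any irreducible element $\pi \in R$ remains irreducible in $R[[x_1,\dots,x_k]]$. Indeed, if $\pi = fg$ in $R[[x_1,\dots,x_k]]$, then comparing constant terms gives $\pi = f(\z)g(\z)$ in $R$, so one of the two constant terms must be a unit in $R$, and hence the corresponding power series is a unit in $R[[x_1,\dots,x_k]]$. Consequently, if $R$ fails to be an HFD and $\alpha = \pi_1\cdots\pi_m = \tau_1\cdots\tau_n$ are two irreducible factorizations in $R$ with $m \neq n$, the very same equation is a witness against the HFD property in $R[[x_1,\dots,x_k]]$ for every $k$.

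For $(1) \Rightarrow (3)$, I would invoke the classical theorem of Carlitz characterizing half-factoriality for rings of algebraic integers: $R$ is an HFD if and only if $\abs{\Cl(R)} \leq 2$. Assume $R$ is an HFD, so $\abs{\Cl(R)} \leq 2$. Since $R$ is Dedekind it is regular and Noetherian, so Lemma \ref{noetherian krull regular ps} shows that $R[[x_1,\dots,x_k]]$ is a regular Noetherian Krull domain. Inductively on the number of variables, applying Lemma \ref{class numbers equal} (regular rings are locally factorial, so $\Cl \cong \DivCl$) together with Lemma \ref{divcl ps bijection} (adjoining a power series variable over a regular Noetherian domain preserves the divisor class group), I obtain the chain of isomorphisms
\[
\Cl(R) \cong \DivCl(R) \cong \DivCl(R[[x_1]]) \cong \cdots \cong \DivCl(R[[x_1,\dots,x_k]]) \cong \Cl(R[[x_1,\dots,x_k]]).
\]
Hence $\abs{\Cl(R[[x_1,\dots,x_k]])} \leq 2$. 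If this class number is $1$, Lemma \ref{krull ufd iff} yields that $R[[x_1,\dots,x_k]]$ is a UFD and hence an HFD; if it is $2$, Lemma \ref{class number 2} gives HFD directly.

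The main subtlety is the inductive step in $(1) \Rightarrow (3)$: at each stage of adjoining a new variable, one must verify that the intermediate ring $R[[x_1,\dots,x_j]]$ is still a regular Noetherian domain so that Lemma \ref{divcl ps bijection} can be applied again, but this is precisely what Lemma \ref{noetherian krull regular ps} (parts (1) and (3)) guarantees at each step, using the standard identification $R[[x_1,\dots,x_j]][[x_{j+1}]] = R[[x_1,\dots,x_{j+1}]]$. No other step poses a significant obstacle.
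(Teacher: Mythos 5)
Your proposal is correct and follows essentially the same route as the paper's proof: the same constant-term argument for $(2)\Rightarrow(1)$ (stated contrapositively rather than directly, which is immaterial), and the same chain Carlitz $\to$ regular Noetherian Krull $\to$ $\Cl\cong\DivCl$ preserved under power series adjunction $\to$ Lemmas \ref{krull ufd iff} and \ref{class number 2} for $(1)\Rightarrow(3)$. No gaps.
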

    \begin{proof}
        First, note that $3\implies 2$ immediately. To complete the proof, we will show $2\implies 1$, then $1\implies 3$.

        Assume that Condition 2 holds, i.e. there is some $k\in\bN$ such that $R[[x_1,\dots,x_k]]$ is an HFD. Since $R$ is Noetherian, it must also be atomic. Then let $\alpha$ be a nonzero, nonunit element of $R$ and consider two irreducible factorizations of $\alpha$ in $R$, $\alpha=\sigma_1\dots\sigma_m=\tau_1\dots\tau_n$. We now note that each $\sigma_i$ and $\tau_j$ must remain irreducible in $R[[x_1,\dots,x_k]]$; this is easy to see by considering the constant terms of potential factorizations in $R[[x_1,\dots,x_k]]$. Then since $R[[x]]$ is an HFD, we have that $m=n$. Thus, $R$ is an HFD, so $2\implies 1$. Note that this portion of the proof will hold more generally, as it only depended on $R$ being an atomic domain.

        Finally, assume that $R$ is an HFD; we want to show that for any $k\in\bN$, $R[[x_1,\dots,x_k]]$ must be an HFD. First, note that since $R$ is a Dedekind domain, it is Noetherian, Krull, and regular; then by Lemma \ref{noetherian krull regular ps}, so is $R[[x]]$, and by induction, $R[[x_1,\dots,x_k]]$. Furthermore, since $R$ is an HFD number ring, $\Cl(R)\leq 2$. Now since $R$ and $R[[x_1,\dots,x_k]]$ are regular rings, Lemma \ref{class numbers equal} tells us that $\Cl(R)\cong \DivCl(R)$ (i.e. $\abs{\DivCl(R)}\leq2$) and $\Cl(R[[x_1,\dots,x_k]])\cong\DivCl(R[[x_1,\dots,x_k]])$. Again using the fact that $R$ is a regular Noetherian ring, we can use Lemma \ref{divcl ps bijection} to conclude that $\abs{\DivCl(R[[x]])}=\abs{\DivCl(R)}\leq 2$. By induction, $\abs{\DivCl(R[[x_1,\dots,x_k]])}\leq 2$. Finally, since $R[[x_1,\dots,x_k]]$ is a Krull domain with $\abs{\Cl(R[[x_1,\dots,x_k]])}=\abs{\DivCl(R[[x_1,\dots,x_k]])}\leq 2$, Lemmas \ref{krull ufd iff} and \ref{class number 2} tell us that $R[[x_1,\dots,x_k]]$ must be an HFD. Thus, $1\implies 3$.
    \end{proof}

    Using this result, we get the following pertaining to non-integrally closed orders.

    \begin{theorem}
        \label{radical hfd ps}
        Let $R$ be an order in a number field $K$ with radical conductor ideal $I$. Then $R$ is an HFD if and only if $R[[x]]$ is an HFD.
    \end{theorem}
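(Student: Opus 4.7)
The plan is to treat the two directions separately, with the reverse implication being essentially automatic and the forward implication pulling together Theorems \ref{elasticity equal ps}, \ref{hfd iff irreducibles}, and \ref{integrally closed hfd iff ps}.

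For the reverse direction, suppose $R[[x]]$ is an HFD. Since $R$ is Noetherian, so is $R$ itself, and in particular $R$ is atomic. I claim that any $\sigma\in\Irr(R)$ remains irreducible in $R[[x]]$: if $\sigma=fg$ in $R[[x]]$ with $f=f_0+f_1x+\cdots$ and $g=g_0+g_1x+\cdots$, then comparing constant terms gives $\sigma=f_0g_0$, so (by irreducibility of $\sigma$ in $R$) one of $f_0,g_0$ is a unit of $R$, whence the corresponding $f$ or $g$ is a unit of $R[[x]]$. Consequently, any two irreducible factorizations $\alpha=\sigma_1\cdots\sigma_m=\tau_1\cdots\tau_n$ in $R$ are also irreducible factorizations in $R[[x]]$, and the HFD property of $R[[x]]$ forces $m=n$. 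This half of the proof does not require the radical conductor hypothesis.

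For the forward direction, suppose $R$ is an HFD. Theorem \ref{hfd iff irreducibles} says that $\Rbar$ is an HFD and $R$ is an associated order. Theorem \ref{integrally closed hfd iff ps} (applied with $k=1$) then gives that $\Rbar[[x]]$ is an HFD, so $\rho(\Rbar[[x]])=1$. This is exactly where the radical conductor hypothesis enters: since $R$ is an associated order with radical conductor, Theorem \ref{elasticity equal ps} applies and yields $\rho(R[[x]])=\rho(\Rbar[[x]])=1$. Finally, $R$ being Noetherian implies $R[[x]]$ is Noetherian, hence atomic; combining atomicity with elasticity one gives that $R[[x]]$ is an HFD.

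The only mildly delicate step is the forward direction's reliance on Theorem \ref{elasticity equal ps}, whose hypotheses demand both the associated property and the radical conductor; but the associated property is handed to us for free by Theorem \ref{hfd iff irreducibles}, so really the theorem reduces to stitching together the elasticity transfer result with the characterization of HFD orders and the integrally-closed power series result. No genuine new work is required beyond noting atomicity of $R[[x]]$ (for the forward direction) and the constant-term argument that irreducibles of $R$ persist in $R[[x]]$ (for the reverse direction).
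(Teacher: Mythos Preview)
Your proof is correct and follows essentially the same approach as the paper's. The only cosmetic difference is that the paper derives ``$\Rbar$ is an HFD'' via Theorem~\ref{elasticity equal} (transferring $\rho(R)=1$ to $\rho(\Rbar)=1$), whereas you read it off directly from condition~(1) of Theorem~\ref{hfd iff irreducibles}; your route is slightly more direct, and your explicit mention of atomicity of $R[[x]]$ fills in a point the paper leaves implicit.
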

    \begin{proof}
        First, note that if $R[[x]]$ is an HFD, then $R$ will be an HFD as well; this part of the proof follows exactly as in the proof of Theorem \ref{integrally closed hfd iff ps}. Then assume that $R$ is an HFD, i.e. $\rho(R)=1$. Note by Theorem \ref{hfd iff irreducibles} that since $R$ is an HFD, it must also be an associated order. Then Theorem \ref{elasticity equal} tells us that since $R$ is an associated order with radical conductor ideal, $\rho(\Rbar)=\rho(R)=1$, and thus $\Rbar$ is an HFD. Now Theorem \ref{integrally closed hfd iff ps} tells us that $\Rbar[[x]]$ must be an HFD, and Theorem \ref{elasticity equal ps} tells us that $\rho(R[[x]])=\rho(\Rbar[[x]])=1$. Then $R[[x]]$ is an HFD.
    \end{proof}

    One will recall from Theorem \cite{halter-koch} that in a quadratic number field, any half-factorial order has radical conductor ideal. Then we get the following corollary immediately.

    \begin{corollary}
        Let $R$ be an order in a quadratic number field $K$. Then $R$ is an HFD if and only if $R[[x]]$ is an HFD.
    \end{corollary}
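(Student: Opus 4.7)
The plan is to reduce this corollary directly to Theorem \ref{radical hfd ps}, using the Halter-Koch characterization cited just before the corollary as the bridge: in a quadratic number field, the HFD property on an order \emph{forces} the conductor to be radical, which is precisely the hypothesis needed to invoke Theorem \ref{radical hfd ps}.

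For the forward implication, I would start by assuming $R$ is an HFD. The cited result from \cite{halter-koch} then guarantees that the conductor ideal $I = (R:\Rbar)$ is radical, so $R$ meets both hypotheses of Theorem \ref{radical hfd ps}. Applying that theorem immediately yields that $R[[x]]$ is an HFD. This is the content-rich direction, but all of the real work has already been done in the earlier sections.

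For the reverse implication, suppose $R[[x]]$ is an HFD. I would not try to show that $I$ is radical in this case (it is not needed); instead, I would reuse the constant-term argument that already appeared in the proof of Theorem \ref{integrally closed hfd iff ps}. Specifically, any quadratic order $R$ is Noetherian, hence atomic, so every nonzero nonunit of $R$ admits an irreducible factorization in $R$. Each irreducible factor of an element $\alpha\in R$ remains irreducible in $R[[x]]$ because any nontrivial factorization in $R[[x]]$ would induce a nontrivial factorization of the constant term in $R$. Since $R[[x]]$ is half-factorial, all such factorizations of $\alpha$ in $R$ have the same length, so $R$ is an HFD.

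The main potential obstacle is simply confirming that no extra hypothesis is needed to invoke Theorem \ref{radical hfd ps} in the quadratic setting — but the Halter-Koch theorem handles exactly this, so the corollary reduces to a short combination of two cited results and a constant-term observation. There is no novel computation or construction required.
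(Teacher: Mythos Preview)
Your proposal is correct and matches the paper's approach: the paper states the corollary as an immediate consequence of the Halter-Koch result (radical conductor for quadratic HFD orders) combined with Theorem \ref{radical hfd ps}. Your explicit handling of the reverse direction via the constant-term argument is exactly the reasoning already embedded in the proof of Theorem \ref{radical hfd ps} (and noted there to hold for any atomic $R$), so you are simply spelling out what the paper leaves implicit.
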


    \section{Non-Radical Conductor Ideals}
    Throughout the major results of this paper, one will note that we rely heavily on requiring the conductor ideal $I$ of the order $R$ in question to be radical. A natural question to ask, then, is whether this is a necessary condition. In particular, we will focus on the case when $R$ is an HFD. The two questions we will answer here are the following:
    \begin{enumerate}
        \item Let $R$ be a half-factorial order in a number field $K$ with conductor ideal $I$. Does $I$ necessarily have to be radical?
        \item Let $R$ be a half-factorial order in a number field $K$. Does $R[[x]]$ necessarily have to be an HFD?
    \end{enumerate}
    
    In Rago's characterization of half-factorial orders in Theorem \ref{rago hfd}, it would seem as though a half-factorial order with non-radical conductor ideal should exist, though it is not immediately obvious that such an example must exist. In a revision of his original paper, Rago provides the following example.

    \begin{example}
        \cite{rago}
        Let $K=\bQ[\alpha]$, with $\alpha$ a root of $x^3-8x-19$. Letting $P=(2,1+\alpha+\alpha^2)$ (one of the non-principal primes lying over 2), $I=P^2$, and $R=\bZ+I$, we have that $R$ is a half-factorial order in $K$ with non-radical conductor ideal $I$.
    \end{example}

    Here, we will provide a similar example using different methods than those in Rago's paper. To do so, we will first show the following result.

    \begin{theorem}
        \label{non-radical hfd}
        Let $R$ be an associated order in a number field $K$ such that $\abs{\Cl(\Rbar)}=2$, i.e. $\Rbar$ is an HFD which is not a UFD. Also assume that $I=P^2$ is the conductor ideal of $R$ for some non-principal $\Rbar$-ideal $P$. Then $R$ is an HFD.
    \end{theorem}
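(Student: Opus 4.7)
The plan is to apply Theorem \ref{hfd iff irreducibles}: since the hypothesis already guarantees that $\Rbar$ is an HFD and that $R$ is an associated order, it suffices to show $\Irr(R)\subseteq\Irr(\Rbar)$. Fix $\alpha\in\Irr(R)$ and argue by cases on $a=v_P(\alpha)$. Because $|\Cl(\Rbar)|=2$, the irreducibles of $\Rbar$ are precisely the generators of principal prime ideals and the elements whose principal ideal factors as a product of two non-principal primes.

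The case $a=0$ runs verbatim as in the first paragraph of the proof of Theorem \ref{irred remain irred}: any $\Rbar$-factorization $\alpha=\beta\gamma$ into non-units is coprime to $I$, and the associated and locally associated properties combine to force the correcting unit $uv$ into $U(R)$, giving a factorization of $\alpha$ in $R$ and hence a contradiction. For $a\geq 2$, let $\delta$ generate the principal $\Rbar$-ideal $P^2=I$; since $\delta\in I\subseteq R$ and $I$ is an $\Rbar$-ideal, twisting by $w\in U(\Rbar)$ chosen so that $w\cdot(\alpha/\delta)\in R$ produces the $R$-factorization $\alpha=(\delta w^{-1})(w\cdot\alpha/\delta)$, whose factors are both non-units whenever $(\alpha)\neq P^2$. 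Irreducibility in $R$ therefore forces $(\alpha)=P^2$, which makes $\alpha$ an $\Rbar$-associate of $\delta$, an $\Rbar$-irreducible.

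The main obstacle is the case $a=1$. Write $(\alpha)=PN$ with $N$ coprime to $P$; because $[N]=[P]$ in the group of order two, the number of non-principal prime factors of $N$ is odd. If $N$ is itself a single non-principal prime $Q$, then $(\alpha)=PQ$ is irreducible in $\Rbar$ and we are done. Otherwise $N$ has at least two prime factors and admits a proper non-trivial principal divisor $M'$: if $N$ has a principal prime factor, take $M'$ to be such a prime; otherwise $N$ has only non-principal prime factors, whose count is odd and at least three, so take $M'$ to be a product of any two of them. The associated property lifts $M'$ to some $v\in R$ with $(v)_{\Rbar}=M'$, and since $M'$ is coprime to $P$, $v$ is coprime to $I$; by integrality of $\Rbar/I$ over $R/I$, $\bar v\in U(R/I)$. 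A short mod-$I$ computation then shows $\overline{\alpha/v}=\bar v^{-1}\bar\alpha\in R/I$, so $\alpha/v\in R+I=R$. Both $v$ and $\alpha/v$ have proper $\Rbar$-ideals and are therefore non-units in $R$, giving a non-trivial factorization of $\alpha$ in $R$ — a contradiction. Hence only $N=Q$ occurs, $\alpha\in\Irr(\Rbar)$, and the verification $\Irr(R)\subseteq\Irr(\Rbar)$ is complete.
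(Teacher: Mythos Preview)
Your proof is correct. The cases $a=0$ and $a\geq 2$ match the paper's treatment: the former is exactly the coprime-to-$I$ argument from the first paragraph of Theorem~\ref{irred remain irred} (which does not use radicality of $I$), and the latter is the paper's case $\alpha\in I$, written with $\delta$ in place of the paper's $\pi$.

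The genuine difference lies in the case $a=1$. You argue ideal-theoretically: writing $(\alpha)=PN$ with $N$ coprime to $P$, you use $\abs{\Cl(\Rbar)}=2$ to locate a proper principal divisor $M'$ of $N$ whenever $N$ is not a single non-principal prime, lift a generator of $M'$ to $v\in R$ via the associated property, and invert $\bar v$ in $R/I$ to exhibit an $R$-factorization $\alpha=v\cdot(\alpha/v)$. The paper instead handles $a=0$ and $a=1$ together: for any $\Rbar$-factorization $\alpha=\beta\gamma$ with $\alpha\notin I=P^2$, one of the factors---say $\gamma$---must lie outside $P$, and the paper shows that the set $U_\alpha=\{u\in U(\Rbar):u\alpha\in R\}$ coincides with $vU_\beta$ for any $v\in U_\gamma$, which (since $1\in U_\alpha$) immediately yields the $R$-factorization $\alpha=(v^{-1}\beta)(v\gamma)$. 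The paper's route is shorter and avoids any case split on the prime structure of $N$; your route is more laborious but makes the ideal structure explicit, showing directly that the only $R$-irreducibles with $v_P(\alpha)=1$ are those generating $PQ$ for a non-principal prime $Q$.
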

    \begin{proof}
        Since $\Rbar$ is an HFD and $R$ is an associated order, Theorem \ref{hfd iff irreducibles} tells us that it will suffice to show that every irreducible in $R$ remains irreducible in $\Rbar$. Let $\alpha\in \Irr(R)$, and for now, assume that $\alpha\notin I$. Suppose that we can write $\alpha=\beta\gamma$ for some $\beta,\gamma\in R$. Since $\alpha\notin I=P^2$, then at least one of $\beta$ or $\gamma$ must lie outside of $P$; without loss of generality, assume $\gamma\notin P$. We will also define $U_\alpha:=\{u\in U(\Rbar)|u\alpha\in R\}$, with $U_\beta$ and $U_\gamma$ defined similarly. Note that since $R$ is an associated order, none of these three sets are empty.

        Let $v\in U_\gamma$, i.e. $v\gamma\in R$. Then if $u\in U_\beta$, note that $(uv)\alpha=(u\beta)(v\gamma)\in R$, so $uv\in U_\alpha$. Thus, $vU_\beta\subseteq U_\alpha$. Similarly, if $u\in U_\alpha$, then $u\alpha=u\beta\gamma=(uv^{-1}\beta)(v\gamma)\in R$. Since $v\gamma\in R\backslash P$, we know that $v\gamma+I\in U(\quot{R}{I})$. Then $uv^{-1}\beta+I=(u\alpha+I)(v\gamma+I)^{-1}\in \quot{R}{I}$, so $uv^{-1}\beta\in R$. Then $v^{-1}u\in U_\beta$, so $v^{-1}U_\alpha\subseteq U_\beta\implies U_\alpha\subseteq vU_\beta$. Therefore, $U_\alpha=vU_\beta$ for any $v\in U_\gamma$.

        Now note that since $\alpha\in R$, $1\in U_\alpha$. Then $v^{-1}\in U_\beta$, so $\alpha=(v^{-1}\beta)(v\gamma)$, with both $v^{-1}\beta$ and $v\gamma$ lying in $R$. Since $\alpha$ is irreducible in $R$, this means that either $v^{-1}\beta$ or $v\gamma$ must be a unit in $R$, and thus either $\beta$ or $\gamma$ must be a unit in $\Rbar$. Then $\alpha$ remains irreducible in $\Rbar$.

        All that remains to show is that any irreducible $\alpha\in \Irr(R)$ which lies in $I$ remains irreducible in $\Rbar$. To do so, note that since $I=P^2$, with $P$ a non-principal prime in the HFD $\Rbar$, $I$ must be a principal ideal generated by some $\pi\in \Irr(\Rbar)$. Then since $\alpha\in I=(\pi)$, there must be some $\beta\in \Rbar$ such that $\alpha=\beta\pi$. Letting $u\in U(\Rbar)$ such that $u\beta\in R$, we have $\alpha=(u\beta)(u^{-1}\pi)$, with $u\beta\in R$ and $u^{-1}\pi\in I\subseteq R$. Since $\alpha$ is irreducible in $R$ and $u^{-1}\pi$ is not a unit, it follows that $u\beta$ must be a unit, and thus $\alpha$ is an associate of the irreducible element $\pi$ in $\Rbar$. Then $\alpha$ remains irreducible in $\Rbar$, so $R$ is an HFD.
    \end{proof}

    Using this theorem, we can now present the following example of a half-factorial order in a number field with non-radical conductor ideal.

    \begin{example}
        \label{non-radical hfd example}
        Let $K=\bQ[\alpha]$, with $\alpha$ a root of $x^3+4x-1$. From the database at \cite{lmfdb}, we get the ring of algebraic integers in $K$ is $\cO_K=\bZ[\alpha]$, $\abs{\Cl(K)}=2$, and $\cO_K$ admits the fundamental unit $\alpha$. In $\cO_K$, the rational prime $3$ factors as $3\Rbar=(3,1+\alpha)(3,2+2\alpha+\alpha^2)$, with both of these prime factors being non-principal. Then let $I=(3,2+2\alpha+\alpha^2)^2=(2-4\alpha+\alpha^2)$ and $R=\bZ+I$, an order in $K$ with conductor ideal $I$. Then $R$ is an order in a number field whose integral closure is an HFD and whose conductor ideal is $I=P^2$, with $P$ a non-principal prime $\Rbar$-ideal. Now, we can use the equivalent characterization of associated orders found in Theorem \ref{associated order} to show in finitely many steps that $R$ is in fact an associated order. Then by the previous theorem, $R$ must be an HFD.
    \end{example}

    Now recall by Theorem \ref{radical hfd ps} that if $R$ is a half-factorial order in a number field $K$ with radical conductor ideal $I$, $R[[x]]$ must be an HFD. The question still remains: does this result still hold if $I$ is non-radical? That is, is an order $R$ an HFD if and only if $R[[x]]$ is an HFD? Using Example \ref{non-radical hfd example}, we can answer this question in the negative. To start, we have the following lemmas.

    \begin{lemma}
        \label{power lies in R}
        Let $R$ be an ideal-preserving order in a number field $K$ with conductor ideal $I$. Then for any $\alpha\in\Rbar$, there exists some $k\in\bN$ such that $\alpha^k\in R$.
    \end{lemma}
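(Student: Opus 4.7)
The plan is to reduce the statement to a local question at each prime divisor of $I$, using the decomposition of $\quot{R}{I}$ afforded by the ideal-preserving hypothesis. Write $I=P_1^{a_1}\dots P_k^{a_k}$. By Condition 3 of Theorem \ref{ideal-preserving order}, the natural map
\[
\quot{R}{I}\;\xrightarrow{\ \sim\ }\;\prod_{i=1}^{k}\quot{R+P_i^{a_i}}{P_i^{a_i}}
\]
is the restriction of the CRT isomorphism $\quot{\Rbar}{I}\cong \prod_i\quot{\Rbar}{P_i^{a_i}}$. Consequently, for $\beta\in\Rbar$ one has $\beta\in R+I=R$ if and only if $\beta\in R+P_i^{a_i}$ for every $i$.

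Next I would prove the following local claim: for each $i$ there is some $k_i\in\bN$ with $\alpha^{k_i}\in R+P_i^{a_i}$. The ring $\quot{\Rbar}{P_i^{a_i}}$ is a finite local ring with maximal ideal $P_i/P_i^{a_i}$, so every element is either a unit or nilpotent. If $\alpha\notin P_i$, then $\alpha+P_i^{a_i}$ is a unit in the finite group $U(\quot{\Rbar}{P_i^{a_i}})$, so some power is $1$; that is, $\alpha^{k_i}\in 1+P_i^{a_i}\subseteq R+P_i^{a_i}$. If instead $\alpha\in P_i$, take $k_i=a_i$, so $\alpha^{k_i}\in P_i^{a_i}\subseteq R+P_i^{a_i}$.

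To finish, I would set $k=k_1k_2\cdots k_r$. Since each $R+P_i^{a_i}$ is a subring of $\Rbar$ (as it is closed under multiplication and contains $1$), the inclusion $\alpha^{k_i}\in R+P_i^{a_i}$ upgrades to $\alpha^k=(\alpha^{k_i})^{k/k_i}\in R+P_i^{a_i}$ for each $i$ simultaneously. Applying the displayed isomorphism above, $\alpha^k+I$ lies in the image of $\quot{R}{I}$ inside $\quot{\Rbar}{I}$, i.e.\ $\alpha^k\in R+I=R$, which is the desired conclusion.

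There isn't really a hard step here: once Theorem \ref{ideal-preserving order} gives us the coordinate-wise decomposition, the main content is just noticing that finite local rings force either a unit (with finite-order image) or a nilpotent. The only point requiring a little care is the bookkeeping at the end — verifying that the same exponent $k$ works in every local factor — and this is why I need $R+P_i^{a_i}$ to actually be a ring rather than merely an additive coset, a fact that is immediate but easy to overlook.
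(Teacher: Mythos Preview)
Your argument is correct and is essentially the same idea as the paper's proof---use the ideal-preserving hypothesis to reduce membership in $R$ to a local question, then observe that modulo each local piece the image of $\alpha$ is either nilpotent or a unit of finite order.  The difference is purely organizational.  The paper groups the primes into two bundles, $J_1$ (the product of those $P_i$ containing $\alpha$) and $J_2$ (the product of the remaining primary components), and then appeals to Theorems~\ref{int orders} and~\ref{intersect orders} to get $(R+J_1^{m})\cap(R+J_2)=R$; you instead go directly to the full coordinate decomposition $\quot{R}{I}\cong\prod_i \quot{R+P_i^{a_i}}{P_i^{a_i}}$ from Condition~3 of Theorem~\ref{ideal-preserving order} and handle each prime separately.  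Your route is slightly more economical since it avoids the detour through the intermediate-order machinery, while the paper's two-bundle split makes the ``nilpotent'' and ``unit'' cases visibly disjoint at the ideal level; but neither version gains any real generality or strength over the other.
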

    \begin{proof}
        Let $\alpha\in \Rbar$ and consider the ideal $J_1=\alpha\Rbar+I$, the smallest $\Rbar$-ideal containing both $\alpha$ and $I$. Let $J_2$ be the smallest ideal dividing $I$ which is relatively prime to $J_1$, i.e. $J_2$ is the product of all the primary ideals dividing $I$ which are relatively prime to $\alpha$. Then note that for some sufficiently large $m\in\bN$, $I|J_1^mJ_2$. Since $R$ is an ideal-preserving ideal, Theorems \ref{int orders} and \ref{intersect orders} tell us that $(R+J_1^m)\cap(R+J_2)=R$.

        Now note that since $\alpha$ is relatively prime to $J_2$, $\alpha\in U(\quot{\Rbar}{J_2})$, a finite group. Then there is some $j\in\bN$ such that $\alpha^j+J_2=1+J_2\in \quot{R+J_2}{J_2}$. Morevoer, for any $i\in\bN$, $\alpha^{ij}+J_2=1+J_2$, so $\alpha^{ij}\in R+J_2$ for every $i\in\bN$. Now let $k=ij$ for some $i\in\bN$ such that $k=ij\geq m$. Then $\alpha^k\in J_1^m\cap (R+J_2)\subseteq R$.
    \end{proof}

    \begin{lemma}
        \label{ps not an HFD}
        Let $R$ be an order in a number field $K$ with conductor ideal $I$. Suppose that there exists some $\Rbar$-ideal $J$ such that $I|J^2$ and $f\in\Irr(R[[x]])$ such that $f(x)=g(x)(\alpha+\beta x)$, with $g\in R[[x]]$ a nonunit, $\beta\in \Rbar$, and $\alpha\in J$. Then $R[[x]]$ is not an HFD.
    \end{lemma}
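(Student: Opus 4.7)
The plan is to exhibit a single element of $R[[x]]$ admitting two irreducible factorizations of different lengths in $R[[x]]$, thereby witnessing the failure of the HFD property. The natural candidate is $\alpha^2 f(x)^2$, which lies in $R[[x]]$ because $\alpha^2\in J^2\subseteq I\subseteq R$.

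First, I would introduce the auxiliary polynomial
\[
Q(x) := (\alpha^2 + \alpha\beta x)^2 = \alpha^4 + 2\alpha^3\beta x + \alpha^2\beta^2 x^2,
\]
and verify that $Q\in R[x]\subseteq R[[x]]$. Using $J^2\subseteq I$, we have $\alpha^4\in J^4\subseteq I$, $\alpha^3\beta\in J^3\cdot\Rbar\subseteq I$, and $\alpha^2\beta^2 = (\alpha\beta)^2\in J^2\subseteq I$, so all coefficients lie in $I\subseteq R$. Notably the unsquared polynomial $\alpha^2+\alpha\beta x$ generally does not lie in $R[x]$ (since $\alpha\beta\in J$ need not be in $R$), which is what makes this step subtle and distinguishes the argument from the radical-conductor case.

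Next, the algebraic identity
\[
\alpha^2 f(x)^2 \;=\; g(x)^2\bigl(\alpha(\alpha+\beta x)\bigr)^2 \;=\; g(x)^2\,Q(x)
\]
yields two factorizations of $\alpha^2 f^2\in R[[x]]$:
\[
(\alpha^2)\cdot f\cdot f \;=\; g\cdot g\cdot Q.
\]
Refining each side into irreducibles in $R[[x]]$, the left factorization has length $\ell_R(\alpha^2)+2$ (using $f\in\Irr(R[[x]])$ by hypothesis) and the right has length $2\ell_{R[[x]]}(g)+\ell_{R[[x]]}(Q)$.

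The hard part is arguing these two lengths must differ. The key observation is that $Q = \alpha^2(\alpha+\beta x)^2$ in $\Rbar[[x]]$, where $(\alpha+\beta x)^2$ is a nonunit in $\Rbar[[x]]$ (its constant term $\alpha^2$ lies in $I$, hence is a nonunit in $\Rbar$). Intuitively this forces $\ell_{R[[x]]}(Q)\geq\ell_R(\alpha^2)+1$; combined with $g$ being a nonunit (so $2\ell_{R[[x]]}(g)\geq 2$), we obtain $2\ell_{R[[x]]}(g)+\ell_{R[[x]]}(Q)>\ell_R(\alpha^2)+2$, establishing distinct factorization lengths for the same element and hence forcing $\rho(R[[x]])>1$. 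Making the lower bound on $\ell_{R[[x]]}(Q)$ fully rigorous is the principal technical obstacle; in the concrete examples that immediately follow the lemma (such as Example \ref{non-radical hfd example}), one verifies directly that $(\alpha+\beta x)^2\in R[[x]]$, so $Q = \alpha^2\cdot(\alpha+\beta x)^2$ is an honest $R[[x]]$-factorization and the length bound follows at once.
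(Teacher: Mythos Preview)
Your argument has a genuine gap, and you correctly identify it yourself: the lower bound $\ell_{R[[x]]}(Q)\geq \ell_R(\alpha^2)+1$ is never established. Without it, the two factorizations $(\alpha^2)\cdot f\cdot f = g\cdot g\cdot Q$ give no contradiction. Your proposed rescue---that in the concrete examples one can check $(\alpha+\beta x)^2\in R[[x]]$ directly---fails in precisely the example the lemma is built for. In Example~\ref{non-radical hfd example} one has (in the lemma's notation) $\alpha=3$, $\beta$ the root of $x^3+4x-1$, and $R$ has integral basis $\{1,\,9\beta,\,2-4\beta+\beta^2\}$; the linear coefficient of $(3+\beta x)^2$ is $6\beta$, which is \emph{not} in $R$. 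So neither the general argument nor the fallback goes through.

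The underlying difficulty is that by multiplying through by $\alpha^2$ you have pushed an $\Rbar$-factor into $R[[x]]$ at the cost of \emph{absorbing} it into $Q$, and there is no easy way to recover that length from $Q$ alone. The paper's proof sidesteps this entirely: rather than introducing an auxiliary constant factor, it raises $f$ to a high enough power $N$ that $(\alpha+\beta x)^N$ itself lands in $R[[x]]$. Concretely, one first reduces to the case that $R$ is an HFD (hence associated, hence ideal-preserving), then invokes Lemma~\ref{power lies in R} to obtain $k$ with $\beta^k\in R$, and picks $m\in J\cap\bZ$ so that $m\alpha\in I$. With $N=mk$ one checks coefficientwise that $(\alpha+\beta x)^N\in R[[x]]$: the $x^N$-coefficient is $\beta^{mk}\in R$, the $x^{N-1}$-coefficient is $(m\alpha)(k\beta^{mk-1})\in I$, and all lower coefficients carry $\alpha^i$ with $i\geq 2$, hence lie in $J^2\subseteq I$. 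Now $f^N=g^N\cdot(\alpha+\beta x)^N$ exhibits one factorization of length exactly $N$ and another of length at least $N+1$, with no need to control the length of any composite auxiliary element.
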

    \begin{proof}
        First, note that if $R$ is not an HFD, then it follows immediately that $R[[x]]$ is not an HFD from the atomicity of $R$ (see proof of Theorem \ref{integrally closed hfd iff ps}). Then assuming that $R$ is an HFD, we get that $R$ is an associated (and thus ideal-preserving) order. From the previous lemma, this tells us that there is some $k\in\bN$ such that $\beta^k\in R$ (and thus $\beta^{ik}\in R$ for any $i\in\bN$). Moreover, there is some $m\in \bN$ such that $m\alpha\in I$ (in particular, select $m\in J\cap \bZ$). Then $f^{mk}=g^{mk}(\alpha+\beta x)^{mk}$. On the left-hand side of this equality, we have a product of $mk$ irreducibles in $R[[x]]$. On the right-hand side, note that $g$ is a nonunit, so factoring $g$ into one or more irreducibles in $R[[x]]$ gives $g^{mk}$ written as a product of at least $mk$ irreducibles in $R[[x]]$. Finally, note that $$(\alpha+\beta x)^{mk}=\sum_{i=2}^{mk}\binom{mk}{i}\alpha^i(\beta x)^{mk-i}+k\beta^{mk-1}(m\alpha)x^{mk-1}+\beta^{mk}x^{mk}\in R[[x]].$$ Moreover, since $\alpha\in J$ is a nonunit in $\Rbar$, $(\alpha+\beta x)^{mk}$ is a nonunit in $R[[x]]$. Then $(\alpha+\beta x)^{mk}$ must factor into at least one irreducible in $R[[x]]$. Thus, the equation $f^{mk}=g^{mk}(\alpha+\beta x)^{mk}$ demonstrates a factorization in $R[[x]]$ with more irreducibles on the right than on the left, so $R[[x]]$ is not an HFD.
    \end{proof}

    \begin{lemma}
        \label{non-associated ps}
        Let $K$, $\alpha$, $I$, and $R$ be as in Example \ref{non-radical hfd example}. Then $\Rbar[[x]]\neq R[[x]]\cdot U(\Rbar[[x]])$; in particular, $3+\alpha x\notin R[[x]]\cdot U(\Rbar[[x]])$.
    \end{lemma}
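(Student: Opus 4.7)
The plan is to suppose, for contradiction, that $3 + \alpha x = r(x) u(x)$ with $r(x) \in R[[x]]$ and $u(x) \in U(\Rbar[[x]])$, set $v(x) = u(x)^{-1} = v_0 + v_1 x + \cdots$, and then extract a contradiction from the first two coefficients of $r(x) = (3 + \alpha x) v(x)$, namely $r_0 = 3 v_0$ and $r_1 = 3 v_1 + \alpha v_0$, by reducing everything modulo the prime $P = (3, 2 + 2\alpha + \alpha^2)$ (so that $I = P^2$).

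First I would record the ambient residue-field data. Reducing $x^3 + 4x - 1$ modulo $3$ factors it as $(x+1)(x^2 + 2x + 2)$ in $\bF_3[x]$, with the quadratic factor irreducible over $\bF_3$ (its discriminant $-4$ is a nonsquare mod $3$). Hence $3\Rbar$ is a product of two distinct primes, so $v_P(3) = 1$, while $\Rbar / P \cong \bF_9$ with the image $\bar\alpha$ lying outside $\bF_3$ (since $\bar\alpha$ is a root of the irreducible quadratic $x^2 + 2x + 2$ over $\bF_3$).

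Next, from $r_0 = 3 v_0 \in R = \bZ + I$, write $3 v_0 = n + i$ with $n \in \bZ$ and $i \in I = P^2$. Since $3 \in P$ and $i \in P$, we get $n \in P \cap \bZ = 3\bZ$, say $n = 3m$. The relation $3(v_0 - m) \in P^2$, combined with $v_P(3) = 1$, then forces $v_0 \equiv m \pmod{P}$; because $v_0 \in U(\Rbar) \subseteq \Rbar \setminus P$, we must have $m \notin 3\bZ$, and therefore the reduction $\bar{v_0}$ lies in $\bF_3^\times \subseteq \bF_9^\times$. Applying the same reduction to $r_1 = 3 v_1 + \alpha v_0 \in R$, writing $r_1 = n' + i'$ with $n' \in \bZ$, $i' \in I$, gives $\alpha v_0 \equiv n' \pmod{P}$, i.e.\ $\bar\alpha \cdot \bar{v_0} \in \bF_3$. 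But $\bar{v_0} \in \bF_3^\times$ together with $\bar\alpha \notin \bF_3$ forces $\bar\alpha \cdot \bar{v_0} \notin \bF_3$, the desired contradiction.

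The main obstacle is recognizing the right strategy rather than any single computation: one needs to notice that the obstruction to $3 + \alpha x$ being a unit-multiple of an element of $R[[x]]$ already appears at the linear coefficient, once one reduces modulo $P$. The constant-term equation traps $\bar{v_0}$ in the small residue subfield $\bF_3$, while the linear-term equation demands $\bar\alpha \cdot \bar{v_0} \in \bF_3$, and these are incompatible precisely because $\alpha$ escapes $\bF_3$. Once this framework is in place, the remaining computations (factoring $x^3 + 4x - 1 \pmod{3}$ and verifying $v_P(3) = 1$) are routine.
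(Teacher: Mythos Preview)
Your proof is correct and follows the same overall strategy as the paper's: both examine the constant and linear coefficients of $(3+\alpha x)\cdot(\text{unit})$ and derive a contradiction. The execution, however, is genuinely different and worth noting.

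The paper introduces the intermediate order $R_1 = R + P = \bZ + P$, computes explicit integral bases for $\Rbar$, $R_1$, $R$, $P$, $I$, and uses the explicit description $U(\Rbar)=\{\pm\alpha^k\}$, $U(R_1)=\{\pm\alpha^{4k}\}$. From $3u_0\in R$ it reads off (via the integral basis) that $u_0\in U(R_1)$, and then the linear term forces $\pm\alpha^{4k+1}\in R_1$, contradicting the description of $U(R_1)$.

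You bypass all of this concrete bookkeeping by working in the residue field $\Rbar/P\cong\bF_9$. The key observation is that the image of $R=\bZ+I$ in $\Rbar/P$ is exactly the prime subfield $\bF_3$ (this is what your manipulation with $n,m$ and $v_P(3)=1$ establishes), so the constant-term condition pins $\bar v_0$ into $\bF_3^\times$, and then the linear-term condition forces $\bar\alpha\in\bF_3$, contradicting the irreducibility of $x^2+2x+2$ over $\bF_3$. Your argument never needs the fundamental unit or any integral basis, only the factorization of $x^3+4x-1$ modulo $3$ and the fact that $v_P(3)=1$. This is cleaner and would generalize more readily to other examples where the unit group is harder to describe explicitly; the paper's approach, by contrast, makes the arithmetic completely explicit, which has its own expository value in a section devoted to a single worked example.
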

    \begin{proof}
        Let $P=(3,2+2\alpha+\alpha^2)$, the prime $\Rbar$-ideal for which $I=P^2$, and let $R_1=R+P$. Note that since $R$ is an HFD, $R_1$ must be an associated order with (radical) conductor ideal $P$. Thus, $R_1$ is an HFD as well. From the fact that $\abs{\quot{\Rbar}{P}}=9$, $3\Rbar\subsetneq P$, and $2-4\alpha+\alpha^2\in I\subseteq P$, we observe that $\{\beta_1,\beta_2,\beta_3\}=\{1,\alpha,2-4\alpha+\alpha^2\}$ is an integral basis for $\Rbar$ such that $\{\beta_1,3\beta_2,\beta_3\}$ is an integral basis for $R_1$, $\{3\beta_1,3\beta_2,\beta_3\}$ is an integral basis for $P$, $\{\beta_1,9\beta_2,\beta_3\}$ is an integral basis for $R$, and $\{9\beta_1,9\beta_2,\beta_3\}$ is an integral basis for $I$. Moreover, note that $U(\Rbar)=\{\pm \alpha^k|k\in\bZ\}$, $U(R_1)= \{\pm \alpha^{4k}|k\in\bZ\}$, and $U(R)=\{\pm \alpha^{12k}|k\in\bZ\}$ (this can easily be verified by finding the smallest powers of $\alpha$ which lie in $R_1$ and $R$, respectively).

        Now suppose that there exists some $u=u_0+b_1x+b_2x^2+\dots\in U(\Rbar[[x]])$ such that $(3+\alpha x)u\in R[[x]]$. Then considering the constant term of this product, we have that $3u_0\in R$; from the integral bases above, it follows that $u_0\in U(R_1)$, i.e. $u_0=\pm \alpha^{4k}$ for some $k\in\bZ$. Now from the linear term of this product, we get that $3b_1+\alpha u_0=3b_1\pm \alpha^{4k+1}=r_1\in R\subseteq R_1$. Since $3\in P$, it follows that $3b_1\in R_1$. Thus, $\pm \alpha^{4k+1}=r_1-3b_1\in R_1$, a contradiction. Then no such $u\in U(\Rbar[[x]])$ can exist, so $(3+\alpha x)\notin R[[x]]\cdot U(\Rbar[[x]])$ and $\Rbar[[x]]\neq R[[x]]\cdot U(\Rbar[[x]])$.
    \end{proof}

    \begin{lemma}
        \label{irred reduces ps}
        Let $K$, $\alpha$, $I$, and $R$ be as in Example \ref{non-radical hfd example}, and let $R_1$ and $P$ be as in the proof of Lemma \ref{non-associated ps}. Then $f(x)=(6-12\alpha+3\alpha^2)+(1-2\alpha-4\alpha^2)x\in R[[x]]$ is an irreducible element of $R[[x]]$ which reduces in $\Rbar[[x]]$.
    \end{lemma}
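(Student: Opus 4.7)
The plan is to first verify that $f \in R[[x]]$ and exhibit a nontrivial factorization in $\Rbar[[x]]$, and then to use a coefficient-by-coefficient argument modulo $P$ to rule out every nontrivial factorization in $R[[x]]$. Writing $\pi := 2 - 4\alpha + \alpha^2$, which satisfies $(\pi) = P^2 = I$, the identity $\alpha^3 = 1 - 4\alpha$ gives $\alpha\pi = 1 - 2\alpha - 4\alpha^2$; hence $f(x) = 3\pi + \alpha\pi\, x = \pi(3 + \alpha x)$. In particular both coefficients of $f$ lie in $I \subseteq R$, so $f \in R[[x]]$. Moreover $(3) = P_1 P$ and $(\pi) = P^2$ with both primes non-principal, so neither $3$ nor $\pi$ admits a nontrivial factorization in $\Rbar$; as both have nonunit constant term, $\pi$ and $3 + \alpha x$ are nonunits in $\Rbar[[x]]$. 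Thus $f$ is reducible in $\Rbar[[x]]$.

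Now suppose toward a contradiction that $f = gh$ with $g, h \in R[[x]]$ both nonunits. Comparing constant terms, $g(0) h(0) = 3\pi$, and the ideal equality $(3\pi) = P_1 P^3$ together with the non-principality of $P_1$ and $P$ leaves only $\{g(0), h(0)\} = \{3u,\, \pi u^{-1}\}$ for some $u \in U(\Rbar)$ as a nontrivial factorization in $\Rbar$ (up to units). After possibly swapping $g$ and $h$, set $g(0) = 3u$ and $h(0) = \pi u^{-1}$. Since $3 \in P \setminus P^2$, the condition $3u \in R = \bZ + I$ simplifies to $u \in \bZ + P = R_1$, so $u \in U(R_1) = \{\pm \alpha^{4k}\}$ by the proof of Lemma \ref{non-associated ps}.

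The decisive step is the analysis of the $x$-coefficient. The relation $g(0)h_1 + g_1 h(0) = \alpha\pi$ rearranges to $g_1 = \alpha u - 3 u^2 h_1 / \pi$, and for $3u^2 h_1/\pi$ to lie in $\Rbar$ we need $h_1 \in P$, hence $h_1 \in R \cap P = 3\bZ + I$. Writing $h_1 = 3m + i$ with $m \in \bZ$ and $i \in I$, I plan to compute $9/\pi$ explicitly in $\Rbar$ by solving $\pi\beta = 9$: this yields $9/\pi = 8 + \alpha + 2\alpha^2$, which reduces modulo $P$ (using $\alpha^2 \equiv \alpha + 1 \pmod{P}$) to $9/\pi \equiv 1 \pmod{P}$. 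Combining this with $3i/\pi \in P$ gives $g_1 \equiv \alpha u - m u^2 \pmod{P}$ in $\Rbar/P \cong \bF_9$. Using $\alpha^4 \equiv 2 \pmod{P}$, both $u$ and $u^2$ land in the subfield $\bF_3 \subset \bF_9$, whereas $\alpha u$ has a nonzero $\alpha$-component in the decomposition $\bF_9 = \bF_3 \oplus \bF_3 \cdot \alpha$. Hence $g_1 + P$ has a nonzero $\alpha$-component in $\bF_9$. But $g_1 \in R$ forces $g_1 + P \in R/(R \cap P) \cong \bF_3 \subset \bF_9$, a contradiction.

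The main obstacle is the explicit modular identity $9/\pi \equiv 1 \pmod{P}$: this is what produces the decisive $\bF_3$ versus $\bF_9$ mismatch. Once this and the description of $U(R_1)$ modulo $P$ via $\alpha^4 \equiv 2 \pmod{P}$ are in hand, the rest amounts to a clean residue comparison in the two-dimensional $\bF_3$-space $\Rbar/P$.
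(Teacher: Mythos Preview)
Your proof is correct and follows a genuinely different route from the paper's. Both arguments agree on the reduction step: the constant-term factorization forces $g(0)=3u$, $h(0)=\pi u^{-1}$ with $u\in U(R_1)=\{\pm\alpha^{4k}\}$, and the contradiction must come from the linear coefficient. From there the paper proceeds by brute force: it notes that $\alpha^{12}\in U(R)$ reduces the problem to $k\in\{-1,0,1\}$, writes out the $\bZ$-module $S_k=3\alpha^{4k}R+\pi\alpha^{-4k}R$ explicitly in the integral basis $\{1,\alpha,\pi\}$ for each of these three values, and then checks by hand (via congruences modulo $9$ and $27$) that $f_1=9-18\alpha-4\pi$ lies in none of them.

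Your argument replaces this case analysis with a single residue computation in $\Rbar/P\cong\bF_9=\bF_3\oplus\bF_3\cdot\alpha$. The key identities $9/\pi\equiv 1\pmod{P}$ and $\alpha^4\equiv 2\pmod{P}$ force $g_1\equiv \alpha u - m u^2\pmod{P}$ with $u,u^2\in\bF_3^\times$, so the $\alpha$-component of $g_1$ modulo $P$ is nonzero; but $g_1\in R=\bZ+I$ reduces to $\bF_3$ modulo $P$, giving the contradiction uniformly in $k$. This is cleaner and more conceptual than the paper's approach, and it makes transparent \emph{why} the obstruction occurs: the image of $R$ in $\Rbar/P$ is the prime subfield, while the linear-coefficient constraint forces $g_1$ outside it. The paper's computation, by contrast, is opaque but entirely self-contained and requires no auxiliary identity like the explicit value of $9/\pi$.
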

    \begin{proof}
        In order to quickly verify when an element in question lies in $R$, $R_1$, $I$, or $P$, we will present elements in terms of the integral bases from the proof of Lemma \ref{non-associated ps}. Note that in $\Rbar[[x]]$, $f=3(2-4\alpha+\alpha^2)+(9-18\alpha-4(2-4\alpha+\alpha^2)x=(3+\alpha x)(2-4\alpha+\alpha^2)$, with neither $3+\alpha x$ nor $2-4\alpha+\alpha^2$ a unit (in fact, since their constant terms lie in $\Irr(\Rbar)$, both factors lie in $\Irr(\Rbar[[x]])$). Then $f$ reduces in $\Rbar[[x]]$. We must show that $f$ is irreducible in $R[[x]]$.

        Let $Q=(3,1+\alpha)$, the prime $\Rbar$-ideal such that $3\Rbar=PQ$, and note that the constant term of $f$ is a generator of $P^3Q$. Since neither $P$ nor $Q$ is a principal ideal, any factorization of $3(2-4\alpha+\alpha^2)$ must necessarily be the product of an generator of $P^2$ times a generator of $PQ$. Thus, if we were to factor $f$ into a product $f=gh$, with $g,h$ nonunit elements of $R[[x]]$, the constant terms of these factors must without loss of generality be $g_0=\pm 3\alpha^k$ and $h_0=\pm (2-4\alpha+\alpha^2)\alpha^{-k}$ for some $k\in\bZ$ such that $3\alpha^k\in R$ (since $2-4\alpha+\alpha^2\in I$, it will always multiply $\alpha^{-k}$ into $R$). As in the previous lemma, $k$ must therefore be a multiple of 4; we will replace $k$ with $4k$ in the following work to make this clear.

        Now considering the linear term of $f=gh$, we have $$f_1=9-18\alpha-4(2-4\alpha+\alpha^2)=\pm 3\alpha^{4k}h_1\pm (2-4\alpha+\alpha^2)\alpha^{-4k}g_1\in 3\alpha^{4k}R+(2-4\alpha+\alpha^2)\alpha^{-4k}R:=S_k$$ for some $k\in\bZ$. To show that such a factorization of $f$ is actually impossible, it will suffice to show that $9-18\alpha-4(2-4\alpha+\alpha^2)$ does not lie in $S_k$ for any $k\in\bZ$. Moreover, since $\alpha^{12}\in R$, it will suffice to show this only for $k\in\{-1,0,1\}$.

        Since any element of $R$ is of the form $a+9b+c(2-4\alpha+\alpha^2)$ for some $a,b,c\in \bZ$, we find the following:
        $$3R=\{3a+27b\alpha+3c(2-4\alpha+\alpha^2)\};$$
    	$$3\alpha^4R=\{(24a-162b+27c)+(-45a+540b-162c)\alpha+(-12a+27b+12c)(2-4\alpha+\alpha^2)\};$$
    	$$3\alpha^{-4}R=\{(402a+891b+432c)+(828a+1836b+891c)\alpha+(195a+432b+210c(2-4\alpha+\alpha^2))\};$$
    	$$(2-4\alpha+\alpha^2)R=\{(81b-36c)+(-162b+81c)\alpha+(a-36b+16c)(2-4\alpha+\alpha^2)\};$$
    	$$(2-4\alpha+\alpha^2)\alpha^4R=\{(9a+1296b-630c)+(-54a-2511b+1296c)\alpha+(4a-70b+289c)(2-4\alpha+\alpha^2)\};$$
    	$$(2-4\alpha+\alpha^2)\alpha^{-4}R=\{(144a+324b+153c)+(297a+648b+324c)\alpha+(70a+153b+76z)(2-4\alpha+\alpha^2)\}.$$
        Now suppose that $f_1\in S_0$. Then for some $a,b,c,x,y,z\in\bZ$,
        $$9=3a+18y-36z;$$
        $$-18=27b-162y+81z;$$
        $$-4=3c+x-36y+16z.$$
        Reducing the second equality modulo 27 gives $-18\equiv 0\modulo{27}$, a contradiction. Thus, $f_1\notin S_0$. Now suppose that $f_1\in S_1$. Then for some $a,b,c,x,y,z\in\bZ$,
        $$9=24a-162b+27c+144x+324y+153z;$$
        $$-18=-45a+540b-162c+297x+648y+324z;$$
        $$-4=-12a+27b+12c+70x+153y+76z.$$
        Reducing the first identity modulo 9 yields $24a\equiv 6a\equiv 0\modulo{9}\implies a\equiv 0\modulo{3}$. However, reducing the second identity modulo 27 gives $-45a\equiv 9a\equiv -18\modulo{27}\implies a\equiv 1\modulo{3}$, a contradiction. Then $f_1\notin S_1$. Finally, suppose that $f_1\in S_{-1}$. Then for some $a,b,c,x,y,z\in\bZ$,
        $$9=402a+891b+432c+9x+1296y-630z;$$
        $$-18=828a+1836b+891c-54x-2511y+1296z;$$
        $$-4 = 195a+432b+210c+4x-70y+289z.$$
        Reducing the first identity modulo 9 yields $402a\equiv 6a\equiv 0\modulo{9}\implies a\equiv 0\modulo{3}$. However, reducing the second identity modulo 27 gives $828a\equiv 18a\equiv -18\modulo{27}\implies a\equiv 2\modulo{3}$, a contradiction. Then $f_1\notin S_{-1}$, so $f$ must in fact be irreducible in $R[[x]]$.
    \end{proof}

    \begin{theorem}
        \label{non-hfd ps}
        Let $K$, $\alpha$, $I$, and $R$ be as in Example \ref{non-radical hfd example}, and let $R_1$ and $P$ be as in the proof of Lemma \ref{non-associated ps}. Then $R$ is a half-factorial order in a number field for which $R[[x]]$ is not an HFD.
    \end{theorem}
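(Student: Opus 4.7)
The statement combines two facts: first, that the ring $R$ constructed in Example \ref{non-radical hfd example} is a half-factorial order (already established there via Theorem \ref{non-radical hfd}), and second, that its power series ring fails to be an HFD. So the entire content of the argument is to verify that $R[[x]]$ is not half-factorial, and the plan is to do this by directly applying Lemma \ref{ps not an HFD} to the irreducible element produced in Lemma \ref{irred reduces ps}.

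The first step is to recall from Lemma \ref{irred reduces ps} that the power series
\[
f(x) = (6-12\alpha+3\alpha^2) + (1-2\alpha-4\alpha^2)x \in R[[x]]
\]
is irreducible in $R[[x]]$, yet factors in $\Rbar[[x]]$ as
\[
f(x) = (2-4\alpha+\alpha^2)\cdot(3+\alpha x).
\]
To match the hypotheses of Lemma \ref{ps not an HFD}, write this as $f(x) = g(x)(a+bx)$ where $g(x) = 2-4\alpha+\alpha^2$ is a constant power series, $a = 3$, and $b = \alpha$. Since $2-4\alpha+\alpha^2 \in I$, the constant term of $g$ is not a unit in $R$, so $g$ is a nonunit in $R[[x]]$; and clearly $b = \alpha \in \Rbar$.

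Next, I would exhibit the $\Rbar$-ideal $J$ required by Lemma \ref{ps not an HFD}. Take $J = P$, the non-principal prime for which $I = P^2$. Since $3\Rbar = PQ \subseteq P$, we have $a = 3 \in J$, and $J^2 = P^2 = I$, so $I \mid J^2$. With all hypotheses of Lemma \ref{ps not an HFD} verified, that lemma produces an explicit equality of products in $R[[x]]$ in which the two sides decompose into different numbers of irreducibles, so $R[[x]]$ is not an HFD.

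Combined with the fact that $R$ itself is an HFD (Example \ref{non-radical hfd example}), this gives precisely the desired example of a half-factorial order whose power series extension is not half-factorial. There is no real obstacle here: all of the genuine difficulty was absorbed into Lemma \ref{irred reduces ps} (checking that $f$ is irreducible in $R[[x]]$ despite splitting in $\Rbar[[x]]$) and Lemma \ref{ps not an HFD} (bootstrapping such a splitting into a violation of half-factoriality by raising to an appropriate power). The final theorem is just the assembly of these pieces with the correct choice of $J$.
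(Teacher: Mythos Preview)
Your proposal is correct and follows essentially the same approach as the paper: both invoke Lemma~\ref{irred reduces ps} for the irreducible $f$ that splits as $(2-4\alpha+\alpha^2)(3+\alpha x)$ in $\Rbar[[x]]$, take $J=P$ (so $I=P^2\mid J^2$ and $3\in J$), note that $g=2-4\alpha+\alpha^2\in R[[x]]$ is a nonunit, and then apply Lemma~\ref{ps not an HFD} directly. Your added commentary on where the real work lies is accurate and does not alter the argument.
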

    \begin{proof}
        From Lemma \ref{irred reduces ps}, we have that $f(x)=(6-12\alpha+3\alpha^2)+(1-2\alpha-4\alpha^2)x$ is an irreducible element of $R[[x]]$ which factors in $\Rbar[[x]]$ as $f=(3+\alpha x)(2-4\alpha+\alpha^2)$. Note that $\alpha\in\Rbar$, $3\in P$, $I=P^2$, and $2-4\alpha+\alpha^2\in R[[x]]$ is a nonunit. Then it follows immediately from Lemma \ref{ps not an HFD} that $R[[x]]$ is not an HFD.
    \end{proof}

   \bibliographystyle{plain}
    \bibliography{bibliography}
\end{document}